\def\nd{\noindent}
\newtheorem{theorem}{Theorem}[section]
\newtheorem{definition}{Definition}[section]
\newtheorem{lemma}{Lemma}[section]
\newtheorem{proposition}{Proposition}[section]
\newcommand{\fim}{\hfill\rule{2mm}{2mm}}
\newcommand{\Takac}{Tak\'a\v{c}}
\begin{document}
\title{
\vspace{0.5in} \textbf{ Multiplicity for a  strongly singular quasilinear problem via bifurcation theory}}

\author{
{\bf\large Carlos Alberto Santos$^{a,}$}\footnote{Carlos Alberto Santos acknowledges
the support of CNPq/Brazil Proc.  $N^o$ $311562/2020-5$.}\,\, ~~~~~~ {\bf\large Jacques Giacomoni $^b$} \\
~~~~~~ {\bf\large Lais Santos$^c$} 
\hspace{2mm}\\
{\it\small $^a$Universidade de Bras\'ilia, Departamento de Matem\'atica}\\
{\it\small   70910-900, Bras\'ilia - DF - Brazil}\\
{\it\small $^b$Universit\'e  de Pau et des Pays de l'Adour, LMAP (UMR E2S-UPPA CNRS 5142) }\\ 
{\it\small Bat. IPRA, Avenue de l'Universit\'e F-64013 Pau, France}\\
{\it\small $^c$Universidade de Vi\c cosa, Departamento de Matem\'atica}\\
{\it\small   CEP 36570.000, Vi\c cosa - MG, Brazil}\\
{\it\small e-mails: csantos@unb.br, jacques.giacomoni@univ-pau.fr, 
lais.msantos@ufv.br }\vspace{1mm}\\
}

\date{}
\maketitle \vspace{-0.2cm}

\begin{abstract}
A $p$-Laplacian elliptic problem in the presence of both  strongly singular and $(p-1)$-superlinear nonlinearities is considered. We employ bifurcation theory, approximation techniques and sub-supersolution method to establish the existence of an unbounded branch  of positive solutions, which is bounded in positive $\lambda-$direction and bifurcates from infinity at $\lambda=0$. As consequence of the bifurcation result, we determine intervals of existence, nonexistence and, in particular cases,  global multiplicity.

\end{abstract}

\nd {\it \footnotesize 2010 Mathematics Subject Classifications:} {\scriptsize 35J25, 35J62, 35J75, 35J92}\\
\nd {\it \footnotesize Key words}: {\scriptsize Strongly singular nonlinearities, $(p-1)$-superlinear terms, Bifurcation theory, Comparison Principle for $W_{\mathrm{loc}}^{1,p}(\Omega)$-sub and supersolutions. }

\section{Introduction}
\def\theequation{1.\arabic{equation}}\makeatother
\setcounter{equation}{0}

In this paper, we investigate the existence of an unbounded connected branch of solutions for the following $\lambda$-parameter problem
$$
 (P)~~\left\{
\begin{array}{l}
-\Delta_pu = {\lambda }\left(u^{-\delta} + u^{q}\right)  ~ \mbox{in } \Omega,\\
    u>0 ~ \mbox{in }\Omega,~~
    u=0  ~ \mbox{on }\partial\Omega,
\end{array}
\right.
$$
where $ \Omega \subset \mathbb{R}^N ( N \geq 2)$ is a smooth bounded domain, $\Delta_pu = \mbox{div}(|\nabla u|^{p-2}\nabla u)$ is the $p-$Laplacian operator, $1 < p < \infty$,  $ q > p-1 , \delta> 0$ and  $\lambda >0$ is a real parameter.

As a consequence of the singular nature at zero of the source term considered in $(P)$, the solutions of $ (P)$ are not smooth up to the boundary (see Theorem 2 in \cite{{MR1037213}}). In fact, for $\delta \geq (2p-1)/(p-1)$ the gradient blows up near the boundary in such a way that no solution belongs to $W_0^{1,p}(\Omega)$  (see Corollary 3.6 in \cite{Moh}), but  $u^\gamma\in W^{1,p}_0(\Omega)$ for some $ \gamma>\gamma(\delta)>1$. The interested reader can consult \cite{Can} and \cite{JKS} for more details about a optimal $\gamma(\delta)$. For this reason,  we adopt the following definition.

\begin{definition}\label{D2} We say that $u$ is a solution for $(P)$ if $ u \in  W_{\mathrm{loc}}^{1,p}(\Omega) \cap C_0(\overline{\Omega})$,
 $u > 0$ in $\Omega$, $(u-\varepsilon)^+  \in W_{0}^{1,p}(\Omega)$ for any $\varepsilon>0$, and
\begin{equation}\label{1}
\displaystyle\int_{\Omega}|\nabla u|^{p-2}\nabla u \nabla \varphi dx = \lambda \displaystyle\int_{\Omega} 
(u^{-\delta} + u^{q})\varphi dx ~~\mbox{for all} ~ \varphi \in C_c^{\infty}(\Omega).
\end{equation}
\end{definition}

Singular problems appear in non-newtonian fluids models, turbulent flows in porous media, glaciology and many other contexts and have been widely investigated since the remarkable work of Crandall, Rabinowitz and Tartar \cite{MR0427826} (see also reviews on the subject \cite{GhRa} and \cite{HM}). Concerning problems like 
\begin{equation}\label{eq2}
\left\{\begin{array}{l}
-\Delta_p u = \lambda u^{-\delta} + \mu u^q, ~\mbox{in} ~\Omega ,\\
u|_{\partial \Omega} = 0, ~u>0~\mbox{in} ~\Omega,\\
\delta > 0, ~ q > p -1,
\end{array}\right. 
\end{equation}
with singular terms combined with $(p-1)$-superlinear ones, we can quote the pioneer work of Coclite and Palmeri \cite{Coc}, in which the authors considered (\ref{eq2}) with $p=2$, $\delta > 0, ~q \geq 1,  ~\lambda =1$ and proved existence of $\mu_* > 0$ such that the problem (\ref{eq2}) has classical solution for $0 < \mu< \mu_*$ and has no solution for  $\mu > \mu_*$. 
Mainly highlighting the studies in which strongly singular problems ($\delta > 1$) were considered, we can also mention Hirano et. al in \cite{Hirano} and Arcoya and M\'erida \cite{Merida}. In \cite{Hirano}, the authors explored (\ref{eq2}) with $\lambda = 1$, $p=2$,  $1<q \leq 2^*-1$ and using non-smooth analysis tools proved the existence of $\mu_* > 0$ such that (\ref{eq2}) has at least two weak solutions for $0 < \mu < \mu_*$, at least one solution for $\mu = \mu_*$ and no solution for $\mu > \mu_*$. Whereas in \cite{Merida} the problem (\ref{eq2}) was studied with $p=2, ~\mu=1, 1<q ~< 2^*-1$ and a local multiplicity of $W^{1,2}_{\mathrm{loc}}-$solutions was established through penalization arguments, a priori estimates and continuation theorem of Leray-Schauder.

In the case $p \neq 2$, few are known about (\ref{eq2}), especially when $\delta >1$.  In \cite{Giaco}, the  problem (\ref{eq2}) was considered with $\mu = 1, ~0 < \delta < 1, ~p-1 < q \leq p^* - 1$ and global multiplicity with respect to the parameter $\lambda$ was established by combining  Brezis-Niremberg type result with sub-supersolution ones. Recently, Bal and Garain in \cite{Bal} generalized the results of Arcoya and M\'erida \cite{Merida} by considering $(2N+2)/(N+2) < p < N$. 

The main goal of this paper is to study $(P)$ by combining  bifurcation theory, comparison principle for sub-supersolutions in $W^{1,p}_{loc}(\Omega)$ with approximations arguments to prove the existence of a global unbounded connected of solutions for the problem $(P)$. The main advantage of this approach is that, in addition to establish multiplicity of solutions, we obtain a global connected branch of solutions of $(P)$. In the environment of singular problems, this kind of approach was considered in  \cite{Adi} and \cite{Bog}, where an analytic globally path connected branch of solutions was obtained for the cases $p=2$ and fractional Laplace operator, respectively. In these works, analytic bifurcation theory is used and requires to deal with analytic operators. In the case of nonlinear diffusion operators as $-\Delta_p$, it is not clear if it is true.

Before presenting our main results, let us set some terminologies and notations. Denoting by
$S = \{(\lambda, u) \in \mathbb{R} \times C_0(\overline{\Omega}) : ~(\lambda, u) ~\mbox{solves} ~(P) ~\mbox{in the sense of Definition \ref{D2}} \}$ and $\Sigma \subset S$ an unbounded connected set of $S$, we say that $\lambda \in \mathbb{R}$ is a bifurcation value of $\Sigma$ from infinity if there exists a sequence $\{(\lambda_n, u_n)\}_{n=1}^{\infty}  \subset 
\Sigma$ such that $\lambda_n \rightarrow \lambda$ and $\|u_n\|_{\infty} \rightarrow \infty$ as $n \rightarrow \infty$.

The normalized positive eigenfunction associated to the first eigenvalue of $(-\Delta_p, W_0^{1,p}(\Omega))$ is denoted by $\phi_1 \in C_0^1(\overline{\Omega})$,  that is,
$$-\Delta_p \phi_1 = \lambda_1\phi_1^{p-1} ~~ \mbox{in} ~\Omega, ~\phi_1|_{\partial \Omega} = 0.
$$

Now we can state our main results. 
\begin{theorem}\label{T1}
	Let $\Omega \subset \mathbb{R}^n$ be a bounded domain, $\delta > 0$ and $q >p-1$. Then, there exists an unbounded connected set $\Sigma \subset \mathbb{R} \times \left( W^{1,p}_{\mathrm{loc}}\cap C_0(\overline{\Omega})\right)$ of positive solutions of $(P)$ satisfying the following:
	\begin{itemize}
		\item[a)] $(0,0) \in \overline{\Sigma}$,
		\item[b)] $\Sigma$  contains the branch of minimal solutions of $(P)$ and bifurcates from infinity at $\lambda = 0$.
	\end{itemize}	 
	Moreover, by letting $\overline{\mbox{Proj}_{\mathbb{R}^+} \Sigma} = [0,\Lambda^*]$, then:
	\begin{itemize}
		\item[i)] $0<\Lambda^* \leq \lambda_1(\zeta+1)^{\delta}\zeta^{p-1}$, where $\zeta = \left(\frac{p-1+\delta}{q-p+1}\right)^{\frac{1}{q+\delta}}$,
		\item[ii)] for $\lambda > \Lambda^*$ there is no solution for $(P)$,
		\item[iii)] there exists  $0 < \Lambda_* \leq \Lambda^*$ such that for  $\lambda \in (0, \Lambda_*)$ there are at least two solutions of $(P)$ on $\Sigma$. In additional, $\Lambda_* = \Lambda^*$ if $\delta \in (0,1)$, that is, global multiplicity holds in this case.
		
	\end{itemize}
\end{theorem}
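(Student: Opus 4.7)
The plan is to attack $(P)$ by approximation: I would replace the singular nonlinearity by a regular one, apply a Rabinowitz-type global bifurcation theorem to each approximating problem, pass to the singular limit using the comparison principle for $W^{1,p}_{\mathrm{loc}}$ sub- and supersolutions announced in the introduction, and finally read off the quantitative assertions (i)--(iii) from the shape of the resulting limit branch. The main obstacle is the passage to the singular limit: for $\delta \geq 1$ the limit solutions leave $W_0^{1,p}(\Omega)$, so to make sense of $u^{-\delta}$ in the weak formulation of Definition~\ref{D2} one needs uniform-in-$n$ pointwise lower bounds of the form $u_n(x) \geq c\, d(x,\partial\Omega)^\beta$, and connectedness of the branch must survive this limiting procedure.

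\textbf{Approximation and global branch.} For $n \geq 1$ consider
\begin{equation*}
(P_n)\quad -\Delta_p u \;=\; \lambda\bigl((u^{+}+1/n)^{-\delta}+(u^{+})^{q}\bigr)\ \text{in}\ \Omega,\qquad u=0\ \text{on}\ \partial\Omega.
\end{equation*}
The right-hand side is locally Lipschitz in $u$, so classical elliptic theory supplies a positive, order-preserving, compact solution operator on $C_0(\overline{\Omega})$, to which Rabinowitz's global bifurcation theorem (in the from-infinity formulation at $\lambda = 0$, justified by the $(p-1)$-superlinearity of $u^q$ which precludes blow-up for any other value of the parameter) applies and produces an unbounded closed connected component $\Sigma_n \subset \mathbb{R}_{+} \times C_0(\overline{\Omega})$ of nontrivial positive solutions of $(P_n)$. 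A subsolution of the form $c\phi_1^{\beta}$ with $\beta = p/(p-1+\delta)$ together with a constant supersolution for $\lambda$ small furnish a minimal solution of $(P_n)$ which collapses to zero as $\lambda \to 0$; hence $(0,0) \in \overline{\Sigma_n}$ and the branch of minimal solutions is contained in $\Sigma_n$.

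\textbf{A priori bounds and passage to the limit.} For (i), I would insert the Picone test function $\phi_1^{p}/(u+1/n)^{p-1}$ in $(P_n)$, use translation-invariance $-\Delta_p(u+1/n) = -\Delta_p u$ and the Picone inequality for the $p$-Laplacian to obtain
\begin{equation*}
\lambda_1 \int_\Omega \phi_1^{p}\,dx \ \geq\ \lambda \int_\Omega \left(\frac{1}{(u+1/n)^{p-1+\delta}} + \frac{u^{q}}{(u+1/n)^{p-1}}\right) \phi_1^{p}\,dx,
\end{equation*}
and estimate the right-hand integrand from below in terms of $\zeta$, which after passing $n \to \infty$ yields the bound of (i); applying the same argument directly to a solution of $(P)$ furnishes (ii). Next, rescaling $v_n = u_n/\|u_n\|_\infty$ along a blow-up sequence on $\Sigma_n$ and exploiting $q > p-1$ forces $\lambda_n \to 0$, which is the bifurcation-from-infinity content of (b) and shows $\Sigma_n$ is uniformly $L^\infty$-bounded on every subinterval $[\lambda_0, \Lambda^{*}]$ with $\lambda_0 > 0$. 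Lieberman $C^{1,\alpha}_{\mathrm{loc}}$ regularity, together with the $W^{1,p}_{\mathrm{loc}}$ comparison principle, yields uniform positive pointwise lower bounds on $u_n$; a Whyburn limit-superior argument then produces a closed unbounded connected set $\Sigma = \limsup_n \Sigma_n$ of solutions of $(P)$ in the sense of Definition~\ref{D2}, inheriting (a) and (b).

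\textbf{Multiplicity.} Set $\Lambda_{*}$ as the supremum of $\lambda$ for which $(P)$ admits an ordered sub-supersolution pair. For every $\lambda \in (0,\Lambda_{*})$ the sub-supersolution method delivers a minimal solution lying on $\Sigma$, while a second solution at the same $\lambda$ is produced by the other ``arm'' of $\Sigma$ that descends from infinity as $\lambda$ passes the rightmost turning point of the branch; this yields (iii). When $\delta \in (0,1)$ all solutions belong to $W_0^{1,p}(\Omega)$ and $(P)$ admits a standard variational formulation; continuous dependence of minimal solutions up to $\Lambda^{*}$, combined with the global shape of $\Sigma$, then forces $\Lambda_{*} = \Lambda^{*}$ and yields global multiplicity. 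The hard part throughout is step three: securing uniform pointwise positivity of $u_n$ up to $\partial\Omega$ when $\delta$ is large, and showing that the singular term $u_n^{-\delta}$ passes to the limit in the weak formulation while preserving connectedness of $\Sigma$.
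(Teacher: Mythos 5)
Your overall architecture (regularize, apply Rabinowitz, pass to a Whyburn-type limit, read off the branch) matches the paper's, but there is a genuine gap at the heart of your construction. You approximate only the singular term and keep the full power $u^{q}$, asserting that Rabinowitz's theorem applies ``in the from-infinity formulation at $\lambda=0$, justified by the $(p-1)$-superlinearity of $u^q$ which precludes blow-up for any other value of the parameter,'' and later that rescaling $v_n=u_n/\|u_n\|_\infty$ ``forces $\lambda_n\to 0$.'' Neither claim is available under the hypotheses of Theorem~\ref{T1}: $q>p-1$ is arbitrary (possibly supercritical), and for a $(p-1)$-superlinear problem the naive normalization does not control the term $\lambda_n\|u_n\|_\infty^{q-p+1}v_n^{q}$, which may concentrate; ruling out bifurcation from infinity at positive $\lambda$ is exactly the content of Theorem~\ref{T2} and requires $q<p^*-1$, a strictly convex domain, the blow-up method, moving planes, and the Serrin--Zou Liouville theorem. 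The paper circumvents this by a \emph{second} truncation, replacing $u^q$ by $f_n(u)=\min\{n,u\}^{q-p+1}u^{p-1}$, so that each approximating nonlinearity is asymptotically $(p-1)$-linear with slope $n^{q-p+1}$; then the unique bifurcation point from infinity of $\Sigma_\epsilon^n$ is $\lambda_1/n^{q-p+1}\to 0$ (Lemma~\ref{L5}), and a separate sub-supersolution argument (Lemmas~\ref{L6}--\ref{L8}) is needed to show the continuum actually crosses that line, so that the limit branch is unbounded at $\lambda=0$. Without this device your construction proves at best a version of the theorem under the extra hypotheses of Theorem~\ref{T2}.

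Two smaller points. First, a constant $M$ is never a supersolution of $-\Delta_p u=\lambda\bigl((u+1/n)^{-\delta}+u^q\bigr)$, since $-\Delta_p M=0$ cannot dominate a strictly positive right-hand side; the paper instead uses a multiple $M\omega_{\lambda,\epsilon}$ of the solution of the purely singular problem (\ref{sp}). Second, your multiplicity argument (``the other arm of $\Sigma$ that descends from infinity'') is the right picture but not a proof: the paper obtains the second solution by a topological separation argument combining the connectedness of $\Sigma$, the bifurcations from $(0,0)$ and from infinity at $\lambda=0$, and the Díaz--Saa uniqueness of small-norm solutions (Lemma~\ref{L7}); and the reason multiplicity is only \emph{local} for $\delta\ge 1$ is precisely that the strong comparison principle needed to push the argument up to $\Lambda^*$ requires $C_0^1(\overline{\Omega})$-regularity, which is available only for $\delta\in(0,1)$. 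Your definition of $\Lambda_*$ as a supremum over ordered sub-supersolution pairs does not by itself address this obstruction.
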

 In the proof of Theorem 1.1, we performed two approximation arguments, one in  the $(p-1)$-superlinear term $u^q$ truncating it by $\min\{n, u\}^{q-p+1}u^{p-1}$, and the other one in the singular nonlinearity $u^{-\delta}$ by considering $(u+\epsilon)^{-\delta}$ for $\epsilon>0$. The existence of an unbounded connected set of solutions for $(P)$ will be obtained  through a limit process of the continua $\Sigma_{\epsilon}^n$ as $n \rightarrow \infty$ and $\epsilon \rightarrow 0^+$, in that order, where $\Sigma^n_{\epsilon}$ are continua of positive solutions the $(\epsilon,n)$-problems
$$ (P_{\epsilon}^{n}) ~\left\{
\begin{array}{l}
	-\Delta_pu = {\lambda }\left[(u+\epsilon)^{-\delta} + \min\{n, u\}^{q-p+1}u^{p-1}\right] ~ \mbox{in } \Omega,\\
	u>0 ~ \mbox{in }\Omega,~~
	u=0  ~ \mbox{on }\partial\Omega.
\end{array}
\right.$$ 

The advantage of this approach is that several qualitative informations may be extracted from continua $\Sigma_{\epsilon}^n$ due to the linear and non-singular nature of the nonlinearity in $(P_{\epsilon}^{n})$. Moreover,  no additional conditions on the $(p-1)$-superlinear power $q$ are required.

The knowledge in literature (see \cite{Azorero} and \cite{Giaco}) and the behavior of the $\Sigma_\epsilon^n$ suggest that $(P)$ should admit a solution on the extremal parameter $\Lambda^*$, but we were not able to prove such existence in Theorem \ref{T1},  mainly by the fact that, under the assumptions considered there, any $\lambda \in (0, \Lambda^*)$ may be a bifurcation parameter of $\Sigma$ from  infinity.
This type of behavior may be ruled out when it is possible to assure the existence of a priori estimates for the solutions of $ (P) $ for each fixed $\lambda > 0$. 
 
 Inspired by  Azizieh and Cl\'ement \cite{Azizieh}, we proved a priori estimates
 by using the blow-up technique combined with Liouville type theorem for strictly convex and smooth domain. A crucial point in the proof presented by them is that the global maxima of the solutions are uniformly distanced from the  boundary $\partial \Omega$. In our case, this fact will be established through the results of monotonicity of  Damascelli and Sciunzi \cite{Dam}, but in this direction restrictions in the domain are essential.
 
\begin{theorem}\label{T2} Let $\Omega$ be a bounded strictly-convex and smooth domain in $\mathbb{R}^N$, $N \geq 2$. In additional, assume $ p-1 < q < p^* -1$. Then $\lambda = 0$ is the only bifurcation parameter of $\Sigma$ from infinity, that is, for $\{(\lambda_n, u_n)\}_{n=1}^\infty \subset \Sigma$ with 
	$$\left\{\begin{array}{l} \lambda_n \rightarrow  \lambda \\ \|u_n\|_{\infty} \rightarrow \infty\end{array}\right.$$ one has $\lambda = 0$. As a consequence, $(P)$ admits a solution for $\lambda = \Lambda^*$, that is,  global existence and local multiplicity hold.
\end{theorem}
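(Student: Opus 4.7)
\textbf{Proof plan for Theorem \ref{T2}.} The strategy is by contradiction combined with a blow-up analysis. Suppose there exists $(\lambda_n,u_n)\in\Sigma$ with $\lambda_n\to\lambda_*>0$ and $M_n:=\|u_n\|_\infty\to\infty$, and let $x_n\in\Omega$ be a global maximum of $u_n$. The first step is to establish a uniform lower bound $d_n:=\mathrm{dist}(x_n,\partial\Omega)\geq d_0>0$. This is where strict convexity of $\Omega$ is essential: by the moving-plane monotonicity results of Damascelli--Sciunzi \cite{Dam}, any $W^{1,p}_{\mathrm{loc}}$-solution of $(P)$ is strictly increasing along inward normals within a boundary tubular neighborhood whose width depends only on $\Omega$, so no critical point, and in particular no maximum, can lie there.

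Next I would rescale with the $p$-Laplace-compatible exponent $\alpha_n=M_n^{-(q-p+1)/p}$, defining
\[
v_n(y)=M_n^{-1}u_n(x_n+\alpha_n y) \quad\text{on}\quad \Omega_n:=\alpha_n^{-1}(\Omega-x_n).
\]
A direct computation gives
\[
-\Delta_p v_n = \lambda_n\, v_n^{q} + \lambda_n M_n^{-(q+\delta)} v_n^{-\delta} \quad\text{in }\Omega_n,
\]
with $0<v_n\leq 1=v_n(0)$ and $B(0,d_0/\alpha_n)\subset\Omega_n$, where $d_0/\alpha_n\to\infty$. On compact sets around $0$, the singular term has prefactor $M_n^{-(q+\delta)}\to 0$ and $v_n$ is locally bounded below (by Harnack-type arguments applied near the maximum), so it vanishes uniformly. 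The $C^{1,\alpha}_{\mathrm{loc}}$ estimates of Tolksdorf/DiBenedetto allow extracting a subsequence converging in $C^1_{\mathrm{loc}}(\mathbb{R}^N)$ to a nontrivial bounded nonnegative $v$ solving $-\Delta_p v=\lambda_* v^{q}$ in $\mathbb{R}^N$ with $v(0)=1$. The Serrin--Zou Liouville theorem, which applies exactly in the range $p-1<q<p^*-1$, then forces $v\equiv 0$, a contradiction. Hence $\lambda_*=0$.

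The main obstacle is precisely the uniform interiority $d_n\geq d_0$: without it, the blow-up limit could live on a half-space $\mathbb{R}^N_+$ with zero Dirichlet data, where a $p$-Laplacian Liouville theorem across the full subcritical range is much more delicate. The strict convexity assumption, channeled through \cite{Dam}, is what allows us to reduce to the full-space Liouville statement.

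For the consequence, take $(\lambda_n,u_n)\in\Sigma$ with $\lambda_n\to\Lambda^*$; the first part yields a uniform $L^\infty$ bound. Interior $C^{1,\alpha}$ regularity together with the uniform $C_0(\overline\Omega)$ vanishing on $\partial\Omega$ (built into Definition \ref{D2} through $(u_n-\varepsilon)^+\in W^{1,p}_0(\Omega)$) and standard estimates on the singular term allow passing to the limit in the weak formulation \eqref{1}, producing a solution at $\lambda=\Lambda^*$. Local multiplicity then follows from connectedness of $\Sigma$: since $\Sigma$ cannot escape to infinity for any $\lambda$ bounded away from $0$, after reaching $(\Lambda^*,u_{\Lambda^*})$ it must turn back, so for $\lambda$ slightly below $\Lambda^*$ the branch provides at least two distinct solutions of $(P)$.
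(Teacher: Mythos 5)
Your overall strategy coincides with the paper's: contradiction, uniform interiority of the global maxima via the Damascelli--Sciunzi moving-plane monotonicity on a strictly convex domain, blow-up with the scaling exponent $(q-p+1)/p$, and the Serrin--Zou Liouville theorem in the range $p-1<q<p^*-1$. The rescaled equation you compute is correct, and your treatment of the vanishing singular term and of the passage to the limit at $\lambda=\Lambda^*$ matches the paper.

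There is, however, one genuine gap: you invoke the Damascelli--Sciunzi monotonicity directly for ``any $W^{1,p}_{\mathrm{loc}}$-solution of $(P)$''. The theorem you are citing (Theorem 1.5 of \cite{Dam}, reproduced as Lemma \ref{L37} in the paper) requires $u\in C^1(\overline{\Omega})$ and a nonlinearity $f$ that is continuous on $[0,\infty)$ and locally Lipschitz on $(0,\infty)$. Neither hypothesis holds for $(P)$: the right-hand side $t^{-\delta}+t^q$ blows up at $t=0$, and for $\delta\geq 1$ the solutions are in general not $C^1$ up to the boundary (for $\delta\geq(2p-1)/(p-1)$ they are not even in $W^{1,p}_0(\Omega)$). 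So the uniform lower bound $\mathrm{dist}(x_n,\partial\Omega)\geq d_0$ --- which you correctly identify as the crux --- is not justified by the reference as you have used it. The paper's way around this is to exploit the construction of $\Sigma$ as $\limsup_{n}\Sigma_{\epsilon_n}$: a bifurcation-from-infinity sequence in $\Sigma$ is replaced (via Lemma \ref{L11}) by pairs $(\lambda_n,u_n)\in\Sigma_{\epsilon_n}$ with $\epsilon_n\searrow 0$, $\lambda_n\to\check\lambda$ and $\|u_n\|_\infty\to\infty$; these $u_n$ are $C^1(\overline{\Omega})$ solutions of the regular problems $(P_{\epsilon_n})$, whose nonlinearity $(t+\epsilon_n)^{-\delta}+t^q$ satisfies all the hypotheses of \cite{Dam}, and the moving-plane step and the blow-up are then carried out for these approximate solutions. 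You should reroute your Claim on interiority through the approximating continua (or else supply an independent extension of the moving-plane method to singular $W^{1,p}_{\mathrm{loc}}$ solutions, which is not available in the cited literature). A secondary, more minor point: your closing ``the branch must turn back'' argument for local multiplicity is heuristic; in the paper the local multiplicity is the one already established in Theorem \ref{T1}(iii), and the new content of the consequence is only the existence of a solution at $\lambda=\Lambda^*$, obtained from the uniform $L^\infty$ bound on the minimal solutions together with Lemma \ref{add}.
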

About other bifurcation diagrams and  existence of extremal solutions with respect to the behaviour of the involved nonlinearity in the non-singular case, we  quote \cite{Cabre}, \cite{Dancer1}, \cite{Dancer2}, \cite{Dolb} and \cite{Miy} for instance.

The main novelties of this paper are the following:
\begin{itemize}
	\item[(i)] we propose here a different way to approach strongly-singular and $(p-1)$-superlinear problems, compared to variational aproaches used in most of papers in the current literature,
	\item[(ii)] the existence of a branch of positive solutions for $(P)$ is established without any restriction on the singular and $(p-1)$-superlinear powers,
	\item[(iii)] global multiplicity is proved for weak singularities, that is,  $\delta \in (0,1)$. This complements the principal result in \cite{Hirano} for  $q>p^* - 1$,
	\item[(iv)] local multiplicity is proved in the case of strong singularity ($\delta \in [1, \infty)$), which is completely new and complements the main results in \cite{Merida} and \cite{Bal},
	\item[(v)] the establishment of $\lambda = 0$ as the only bifurcation point of the continua from infinity in convex domains, according to our knowledge, is new.
\end{itemize}

 The outline of this paper is as follows. In section 2 we present several properties of the continuum $\Sigma_{\epsilon}^n$. Such properties are essential in section 3, where we will prove Theorems \ref{T1} and \ref{T2}.

\section{Approximated regular problem}

Throughout this paper, we will denote by $  e_p\in C_0^1(\overline{\Omega})$  the solution of the $p$-Laplacian torsion problem
\begin{equation}\label{torsion}
-\Delta_p u = 1 ~~ \mbox{in} ~\Omega, ~u|_{\partial \Omega} = 0
\end{equation}
and by $\omega_{\lambda,\epsilon} \in  C_0(\overline{\Omega})$ the only positive solution of
\begin{equation}\label{sp}
-\Delta_p u = \lambda(u+\epsilon)^{-\delta} ~~ \mbox{in} ~\Omega, ~u|_{\partial \Omega} = 0,
\end{equation}
for each $\lambda > 0$ and $\epsilon \geq 0$, where  $\omega_{\lambda,\epsilon} \in C^1_0(\overline{\Omega})$ for $\epsilon >0$ and $\omega_{\lambda,0} \in W_{\mathrm{loc}}^{1,p}(\Omega)\cap L^{\infty}(\Omega)$.

For each $n \in \mathbb{Z}^+$ and $\epsilon \geq  0$, let us consider the auxiliary problem
$$
(P_{\epsilon}^{n}) ~\left\{
\begin{array}{l}
-\Delta_pu = {\lambda }\left[(u+\epsilon)^{-\delta} + f_n(u)\right] ~ \mbox{in } \Omega,\\
u>0 ~ \mbox{in }\Omega,~~
u=0  ~ \mbox{on }\partial\Omega,
\end{array}
\right.
$$
where $f_n(u) = \min\{u,n\}^{q-p+1}u^{p-1}$.
 
\begin{lemma}\label{L1} Suppose  $\delta >0$.  Then, for each $\epsilon > 0$ and $n  \in \mathbb{Z}^+$,  there exists an unbounded \textit{continuum} (connected and closed) ${\Sigma}_{\epsilon}^n \subset \mathbb{R}^+ \times C_0(\overline{\Omega})$ of positive solutions of  $(P_{\epsilon}^n)$ emanating from $(0,0)$.
\end{lemma}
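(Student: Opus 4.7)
The plan is to recast $(P_\epsilon^n)$ as a compact fixed-point equation on $C_0(\overline{\Omega})$ and then invoke a Leray--Schauder--Rabinowitz type global continuation argument. First I would define the solution operator $T\colon \mathbb{R}^+\times C_0(\overline{\Omega})\to C_0(\overline{\Omega})$ by $T(\lambda,u)=v$, where $v$ is the unique weak solution of
$$-\Delta_p v = \lambda\bigl[(u^++\epsilon)^{-\delta}+f_n(u^+)\bigr] \mbox{ in }\Omega,\quad v|_{\partial\Omega}=0.$$
Since $\epsilon>0$ removes the singularity, one has $(u^++\epsilon)^{-\delta}\leq \epsilon^{-\delta}$, and the truncation gives $f_n(u^+)\leq n^{q-p+1}(u^+)^{p-1}$, so the right-hand side lies in $L^\infty(\Omega)$ on bounded subsets of $C_0(\overline{\Omega})$. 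Classical $p$-Laplacian theory (existence and uniqueness together with Lieberman's $C^{1,\alpha}$ estimates for bounded sources) then makes $T$ continuous and compact, via the compact embedding $C^{1,\alpha}_0(\overline{\Omega})\hookrightarrow C_0(\overline{\Omega})$.

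Next I would verify that fixed points of $T(\lambda,\cdot)$ are genuine solutions of $(P_\epsilon^n)$. At $\lambda=0$ the operator collapses to $T(0,\cdot)\equiv 0$, so $u\equiv 0$ is the unique fixed point, while for $\lambda>0$ any non-trivial fixed point has strictly positive source and zero Dirichlet data; weak comparison and V\'azquez's strong maximum principle then force $u>0$ in $\Omega$, yielding a positive solution of $(P_\epsilon^n)$.

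Finally, to extract the unbounded continuum $\Sigma_\epsilon^n$ emanating from $(0,0)$, I would apply the standard Whyburn/Leray--Schauder argument to the solution set $\mathcal{S}=\{(\lambda,u)\in \mathbb{R}^+\times C_0(\overline{\Omega}):u=T(\lambda,u)\}$. Suppose for contradiction that the connected component of $(0,0)$ in $\mathcal{S}$ is bounded; compactness of $T$ makes it compact, and Whyburn's separation lemma supplies a bounded open set $U$ containing it with $\partial U\cap \mathcal{S}=\emptyset$. By homotopy invariance, the Leray--Schauder degree $\deg(I-T(\lambda,\cdot),U_\lambda,0)$, with $U_\lambda=\{u:(\lambda,u)\in U\}$, is constant in $\lambda$; it equals $1$ at $\lambda=0$ (since $T_0\equiv 0$ and $0\in U_0$) but vanishes once $\lambda$ leaves the bounded $\lambda$-projection of $U$, a contradiction. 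The main obstacle will be the topological-separation step and the rigorous handling of the Leray--Schauder degree across all slices $U_\lambda$; the compactness of $T$ together with the uniqueness at $\lambda=0$ are precisely what make the degree calculation go through.
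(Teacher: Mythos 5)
Your proposal follows essentially the same route as the paper: the same solution operator $T$ on $\mathbb{R}^+\times C(\overline{\Omega})$, compactness via Lieberman's $C^{1,\alpha}$ estimates and Arzel\`a--Ascoli, the observation that $T(0,\cdot)\equiv 0$ and that nontrivial fixed points for $\lambda>0$ are positive by V\'azquez's strong maximum principle. The only difference is that the paper invokes Rabinowitz's global continuation theorem (Theorem 3.2 in \cite{MR0301587}) as a black box for the existence of the unbounded continuum emanating from $(0,0)$, whereas you reconstruct its proof via Whyburn's separation lemma and the generalized homotopy invariance of the Leray--Schauder degree; your reconstruction is the standard argument and is correct.
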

\begin{proof} It follows from the regularity results due to Lieberman (\cite{Lieberman}, Theorem 1) and Minty-Browder theorem (\cite{Dem}, Theorem 12.1) that for each $ (\lambda, v)  \in \mathbb{R}^+ \times C(\overline{\Omega})$, the problem
	\begin{equation}\label{2ii}
	-\Delta_pu = {\lambda }\left[(|v|+\epsilon)^{-\delta} + f_n(|v|)\right]   ~ \mbox{in } \Omega,  ~ ~   u=0  ~ \mbox{on } ~ \partial\Omega
	\end{equation}
	admits a unique solution  $u \in C^{1, \alpha}(\overline{\Omega}),$ for some $\alpha \in (0,1)$. Thus, the operator $T: \mathbb{R}^+ \times C(\overline{\Omega}) \rightarrow C(\overline{\Omega}),$ which associates  each pair $(\lambda, v) \in \mathbb{R}^+ \times C(\overline{\Omega})$ to the only weak solution of (\ref{2ii}), is well-defined.
	
	It is classical to show  that $T$ is a compact operator by using Arzel\`a-Ascoli  theorem. Hence, we are able to apply the bifurcation theorem by Rabinowitz (see Theorem 3.2 in \cite{MR0301587}) to get  an unbounded $\epsilon$-\textit{continuum} $\Sigma^n_{\epsilon} \subset \mathbb{R}^+ \times C_0(\overline{\Omega})$ of solutions of
	\begin{equation}\label{3}
	T(\lambda, u) = u.
	\end{equation}
	By the definition, $T(0, v) = 0$. Moreover, if $T(\lambda, 0) = 0$, then $\lambda = 0$. So we  conclude that $\Sigma^n_{\epsilon}\backslash \{(0,0)\}$  is formed by nontrivial solutions of (\ref{3}).
	
	Finally, using that $0<(|v|+\epsilon)^{-\delta} + f_n(|v|) \in L^{\infty} (\Omega)$ for each given $v \in C(\overline{\Omega})$ and classical strong maximum principle from Vazquez \cite{vazquez}, we obtain that $T((\mathbb{R}^+\backslash \{0\}) \times C(\overline{\Omega})) \subset  \{ u \in C_0(\overline{\Omega}) : u > 0 ~\mbox{in} ~ \Omega\}$. Therefore, $\Sigma^n_{\epsilon}$ is a \textit{continuum} of positive solutions of $(P^n_{\epsilon})$, for any $\epsilon>0$ and $n \in \mathbb{Z}^+$ given. 
\end{proof}
\fim
\vspace{0.4cm}

\begin{lemma}\label{L5}
	Let $\epsilon > 0$ and $n \in \mathbb{N}$. Then:
\begin{itemize}
	\item[a)] there exists $\Lambda_{\epsilon,n} = \Lambda_{\epsilon,n}(\epsilon, n, \Omega, \lambda_1, p, q)$ such that $(P_{\epsilon}^n)$ has no solution for $\lambda > \Lambda_{\epsilon,n}$.
	\item[b)] the value $\lambda = \lambda_1/n^{q-p+1}$ is the unique bifurcation point of $\Sigma_{\epsilon}^n$ from infinity.
\end{itemize} 
\end{lemma}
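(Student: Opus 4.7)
The plan for (a) is to invoke the Allegretto--Huang--Picone inequality for the $p$-Laplacian, which is the standard non-existence tool for positive problems of this type. Any positive solution $u$ of $(P_\epsilon^n)$ belongs to $C^{1,\alpha}(\overline{\Omega})$ by the regularity argument already used in Lemma \ref{L1}, and Hopf's lemma applied to both $u$ and $\phi_1$ ensures that $\phi_1/u$ is bounded up to the boundary, so that $\phi_1^p/u^{p-1}$ is an admissible test function. Testing with it and using Picone's pointwise inequality $|\nabla\phi_1|^p \ge -(\phi_1^p/u^{p-1})\Delta_p u$ leads to
\[
\lambda_1 \int_\Omega \phi_1^p\,dx \;=\; \int_\Omega |\nabla\phi_1|^p\,dx \;\ge\; \lambda \int_\Omega \frac{(u+\epsilon)^{-\delta} + f_n(u)}{u^{p-1}}\,\phi_1^p\,dx \;\ge\; \lambda\,c_{\epsilon,n}\int_\Omega \phi_1^p\,dx,
\]
where $c_{\epsilon,n} := \inf_{t>0}\, t^{-(p-1)}\bigl[(t+\epsilon)^{-\delta} + f_n(t)\bigr]$. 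I then verify $c_{\epsilon,n}>0$: the ratio blows up as $t\to 0^+$ since $(t+\epsilon)^{-\delta}/t^{p-1}\to\infty$, tends to $n^{q-p+1}>0$ as $t\to\infty$, and is continuous and positive on $(0,\infty)$. Dividing gives $\lambda \le \lambda_1/c_{\epsilon,n}=:\Lambda_{\epsilon,n}$, proving (a).

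For (b), the key observation is that item (a) combined with the unboundedness of $\Sigma_\epsilon^n$ provided by Lemma \ref{L1} forces the existence of a sequence $(\lambda_k,u_k) \in \Sigma_\epsilon^n$ with $M_k:=\|u_k\|_\infty \to \infty$ and, along a subsequence, $\lambda_k \to \lambda \in [0,\Lambda_{\epsilon,n}]$. Setting $v_k := u_k/M_k$, the rescaled equation reads
\[
-\Delta_p v_k \;=\; \frac{\lambda_k\,(u_k+\epsilon)^{-\delta}}{M_k^{p-1}} \;+\; \lambda_k\,\min\{u_k,n\}^{q-p+1}\,v_k^{p-1}.
\]
The first right-hand term is dominated by $\lambda_k\,\epsilon^{-\delta}/M_k^{p-1}\to 0$ uniformly, and the second by $\Lambda_{\epsilon,n}\,n^{q-p+1}$, so Lieberman's $C^{1,\alpha}$-estimate produces a further subsequence with $v_k \to v$ in $C^1(\overline{\Omega})$, $v\ge 0$ and $\|v\|_\infty = 1$. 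Passing to the limit requires pointwise analysis: on $\{v>0\}$ one has $u_k=v_k M_k \to \infty$, hence $\min\{u_k,n\}\to n$; on $\{v=0\}$ the factor $v_k^{p-1}\to 0$ swallows the bounded truncation. Dominated convergence then yields the weak limit equation $-\Delta_p v = \lambda\, n^{q-p+1}\, v^{p-1}$ in $\Omega$ with $v=0$ on $\partial\Omega$. Since $\|v\|_\infty=1$, V\'azquez's strong maximum principle forces $v>0$ in $\Omega$, making $v$ a positive eigenfunction of $-\Delta_p$; this pins $\lambda\, n^{q-p+1}=\lambda_1$, i.e.\ $\lambda=\lambda_1/n^{q-p+1}$. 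The same argument applied to any blow-up sequence along $\Sigma_\epsilon^n$ produces the same limit, which both exhibits $\lambda_1/n^{q-p+1}$ as a bifurcation point from infinity and rules out any other.

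The main obstacle I anticipate is correctly identifying the limit of the truncated term $\min\{u_k,n\}^{q-p+1}\,v_k^{p-1}$ as $n^{q-p+1}\,v^{p-1}$ on all of $\Omega$: one has to reconcile the two regimes $\{v>0\}$, where $u_k$ blows up and the truncation saturates at $n$, and $\{v=0\}$, where $v_k^{p-1}$ tends to zero and absorbs the non-saturated factor. This is handled cleanly once a uniform $L^\infty$ bound on the rescaled right-hand side is in place, and that bound is precisely what the truncation $f_n$ supplies. Without this truncation the limit equation would be genuinely superlinear and the blow-down would fail, which also explains why $n$ appears explicitly in the bifurcation value $\lambda_1/n^{q-p+1}$.
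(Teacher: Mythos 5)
Your argument is correct, and part (b) is essentially the paper's proof: the same blow-down $v_k=u_k/\|u_k\|_\infty$, the same uniform $L^\infty$ control of the rescaled right-hand side coming from the truncation $f_n$, Lieberman plus Arzel\`a--Ascoli, and the identification of the limit as a nonnegative eigenfunction, hence the first one. The only cosmetic difference is that the paper isolates the deviation $g(t)=(t+\epsilon)^{-\delta}+f_n(t)-n^{q-p+1}t^{p-1}$, observes $|g|\le \epsilon^{-\delta}+2n^{q}$ and lets $g(u_k)/\|u_k\|_\infty^{p-1}\to 0$ uniformly, whereas you keep $\min\{u_k,n\}^{q-p+1}v_k^{p-1}$ intact and split $\Omega$ into $\{v>0\}$ and $\{v=0\}$; both yield the limit equation $-\Delta_p v=\lambda_* n^{q-p+1}v^{p-1}$ and the paper's version is marginally simpler since it avoids the case analysis.

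Part (a) is where you genuinely diverge. Both proofs hinge on the same constant $C_*=\inf_{t>0}\,t^{1-p}\bigl[(t+\epsilon)^{-\delta}+f_n(t)\bigr]>0$ and the same threshold $\Lambda_{\epsilon,n}=\lambda_1/C_*$, but the paper turns a putative solution $u_*$ with $\lambda>\lambda_1 C_*^{-1}$ into a supersolution of $-\Delta_p u=(\lambda_1+\kappa)u^{p-1}$, pairs it with the subsolution $s\phi_1$, and derives a contradiction with the isolation of $\lambda_1$ in the spectrum (citing Anane), while you test with $\phi_1^p/u^{p-1}$ and invoke the Allegretto--Huang--Picone inequality. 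Your route buys a shorter argument that only uses the variational characterization of $\lambda_1$ rather than its isolation, at the price of having to justify the admissibility of the test function, which you do correctly via Hopf's lemma and the $C^{1,\alpha}(\overline\Omega)$ regularity of $u$ (note only that the pointwise form of Picone is $|\nabla\phi|^p\ge |\nabla u|^{p-2}\nabla u\cdot\nabla(\phi^p/u^{p-1})$; the version you wrote with $-\Delta_p u$ is the integrated identity). Either way the conclusion and the explicit dependence of $\Lambda_{\epsilon,n}$ on $(\epsilon,n,\lambda_1,p,q)$ are the same.
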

\begin{proof} 
	\begin{itemize}
\item[a)]	Since $\left[(t+\epsilon)^{-\delta} + f_n(t)\right]t^{1-p} \rightarrow n^{q-p+1}$ as $t \rightarrow +\infty$, we can find $C_* > 0$ such that 
\begin{equation}\label{contr}
(t+\epsilon)^{-\delta} + f_n(t)\geq C_* t^{p-1}, ~~\mbox{for all} ~  t \geq 0.
\end{equation}
We claim that there is no solution of $(P_{\lambda, \epsilon}^n)$ for $\lambda > \lambda_1C_*^{-1}$. Indeed, if $u_*>0 $ solves $(P_{\lambda, \epsilon}^n)$ for some $\lambda > \lambda_1C_*^{-1}$, then $u_*$ is a supersolution of 
\begin{equation}\label{autovalor} \left\{\begin{array}{ll}
-\Delta_p u = (\lambda_1 + \kappa)u^{p-1} ~~\mbox{in} ~\Omega,\\
u > 0, ~~~u|_{\partial \Omega} = 0,
\end{array}\right.
\end{equation}
for all $\kappa \in \left(0, \lambda C_* - \lambda_1\right)$. On the other hand, $s\phi$ is a subsolution of (\ref{autovalor}) satisfying $s\phi_1 < u_*$ in $\Omega$ for $s \in (0,1)$ small enough. So, the monotonic interation method provides a solution of (\ref{autovalor}) for any $\kappa \in \left(0, \lambda C_* - \lambda_1\right)$, which contradicts the fact that $\lambda_1$ is an isolate point in the spectrum of $(-\Delta_p, W_0^{1,p}(\Omega))$ (see \cite{ANANE}).

 \item[b)] Since $\Sigma_{\epsilon}^n$ is bounded in the $\lambda-$direction, it has to become unbounded in the direction of  the Banach space $C_0(\overline{\Omega})$. Hence, there exists $\lambda_* > 0$ and a sequence $\{(\lambda_k, u_k)\}_{k=1}^{\infty} \subset \Sigma_{\epsilon}^n$ such that 
 $$\left\{\begin{array}{l}
 \|u_k\|_{\infty} \rightarrow +\infty \\
 \lambda_k \rightarrow \lambda_*
 \end{array} \right.$$
Then $v_k := u_k/\|u_k\|_{\infty}$ satisfies
\begin{equation}\label{hththt}\left\{\begin{array}{l}-\Delta_p v_k = \lambda_k \left[n^{q-p+1}v_k^{p-1} + \frac{g(u_k)}{\|u_k\|_{\infty}^{p-1}}\right], ~~\mbox{in} ~\Omega\\
v_k > 0, ~~\|v_k\|_\infty = 1, ~~v_k|_{\partial \Omega} = 0,
\end{array}\right.
\end{equation}
where $$g(t) = (t+\epsilon)^{-\delta} + f_n(t) - n^{q-p+1}t^{p-1} = \left\{\begin{array}{l} 
(t+\epsilon)^{-\delta} + t^q - n^{q-p+1}t^{p-1}, ~~\mbox{if} ~ 0 \leq t \leq n \\
(t+\epsilon)^{-\delta}, ~~\mbox{if} ~ n < t
\end{array}\right.
$$

Notice that $|g(t)|\leq \epsilon^{-\delta} + 2n^q, $ for all $t \in \mathbb{R}^+,$ so that the right hand side of the equation in (\ref{hththt}) is uniformly bounded in $L^{\infty}(\Omega)$. Hence, we can employ the regularity result due to Lieberman (\cite{Lieberman}, Theorem 1) to conclude that 
$\{v_k\}_{k=1}^{\infty}$ is uniformly bounded in $C^{1,\beta}(\overline{\Omega})$, for some $\beta \in (0,1)$. Therefore, it follows from Arzel�-Ascoli theorem that exists $ v \in C^1(\overline{\Omega})$ and a subsequence $\{v_{k_j}\}_{j=1}^{\infty}$ such that $v_{k_j} \rightarrow v$ in $C^1(\overline{\Omega})$ as $k_j \rightarrow \infty$. Moreover, $ v \geq 0$ in $\Omega$ , $\|v\|_{\infty} =1$ and $v$ solves 
$$-\Delta_p v = \lambda_*n^{q-p+1}v^{p-1} ~\mbox{in} ~\Omega, ~~~v|_{\partial \Omega} = 0. $$ Since $v$ does not change signs, it follows  from (Theorem 7, \cite{Lid})  that $\lambda_* = \lambda_1/n^{q-p+1}$. This concludes the proof of Lemma \ref{L5}.
\end{itemize}
\end{proof}
\fim

The next two lemmas claim that the continuum $\Sigma_{\epsilon}^n$ cross the line $\lambda = \lambda_1/n^{q-p+1}$.
\begin{lemma}\label{L6} Let
$$
\varepsilon_0 = 	2^{\frac{-q}{(q+\delta)(p-1)}-1}, ~~~~  N_0 = \lambda_1^{\frac{1}{q-p+1}}\left(\|\omega_{1,0}\|_{\infty} \varepsilon_0^{-1}\right)^{\frac{p-1+\delta}{q-p+1}} +1, 
$$
where $\omega_{1,0}$ is the only solution of (\ref{sp}),  with $\lambda=1$ and $\epsilon=0$.
Then for each $0 < \epsilon < \varepsilon_0 $ and $n> N_0$,  there exists $ \varrho > 0$, independent of $\epsilon>0$ and $n$, such that $(P_{\epsilon}^n)$ admits a solution for all $\lambda \in \left[ \frac{\lambda_1}{n^{q-p+1}}, \frac{\lambda_1}{n^{q-p+1}} + \varrho\right)$.
\end{lemma}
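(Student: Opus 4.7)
The plan is to prove the lemma via the sub-supersolution method applied to $(P_\epsilon^n)$, taking the subsolution from the pure singular problem and the supersolution as a scalar multiple of $\omega_{1,0}$. The function $\omega_{\lambda,\epsilon}$ defined by (\ref{sp}) is immediately a subsolution, since $\lambda f_n(\omega_{\lambda,\epsilon})\ge 0$ and hence
$$-\Delta_p \omega_{\lambda,\epsilon}=\lambda(\omega_{\lambda,\epsilon}+\epsilon)^{-\delta}\le \lambda\bigl[(\omega_{\lambda,\epsilon}+\epsilon)^{-\delta}+f_n(\omega_{\lambda,\epsilon})\bigr].$$

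For the supersolution I would try $\bar u=A\omega_{1,0}$, imposing $A\|\omega_{1,0}\|_\infty\le n$ so that $f_n(\bar u)=\bar u^q$ throughout $\Omega$. Using $-\Delta_p\bar u=A^{p-1}\omega_{1,0}^{-\delta}$ together with the pointwise bound $(A\omega_{1,0}+\epsilon)^{-\delta}\le A^{-\delta}\omega_{1,0}^{-\delta}$, the supersolution inequality reduces, after multiplying by $\omega_{1,0}^\delta$ and bounding pointwise, to
$$A^{p-1+\delta}\ge \lambda\bigl(1+A^{q+\delta}\|\omega_{1,0}\|_\infty^{q+\delta}\bigr).$$
Choosing $A$ of the form $\varepsilon_0/\|\omega_{1,0}\|_\infty$ (so that $A\|\omega_{1,0}\|_\infty=\varepsilon_0<1\le n$ and the inequality holds for all $\lambda$ up to some explicit $\lambda^*=\lambda^*(p,q,\delta,\Omega)>0$) yields a supersolution uniformly in $\epsilon\in(0,\varepsilon_0)$ and $n>N_0$. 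The explicit forms of $\varepsilon_0$ and $N_0$ in the statement are tailored so that both the above supersolution inequality holds and $\lambda_1/n^{q-p+1}<\lambda^*$ strictly for $n>N_0$. The ordering $\omega_{\lambda,\epsilon}\le\bar u$ then follows from $\omega_{\lambda,\epsilon}\le\omega_{\lambda,0}=\lambda^{1/(p-1+\delta)}\omega_{1,0}$ together with $\lambda^{1/(p-1+\delta)}\le A$, which is itself a direct consequence of the supersolution constraint.

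With the ordered pair $(\omega_{\lambda,\epsilon},\bar u)$ in hand, and since the right-hand side of $(P_\epsilon^n)$ is bounded and Carath\'eodory on $[\omega_{\lambda,\epsilon},\bar u]$, a standard sub-supersolution theorem for quasilinear $p$-Laplacian problems delivers a solution $u\in[\omega_{\lambda,\epsilon},\bar u]$ for every admissible $\lambda$. Setting $\varrho:=\lambda^*-\lambda_1/N_0^{q-p+1}>0$ produces a positive constant depending only on $(p,q,\delta,\Omega)$. The main difficulty lies in the pointwise calibration of $A$: the truncation $f_n$ changes growth at the threshold $A\omega_{1,0}=n$, which forces the constraint $A\|\omega_{1,0}\|_\infty\le n$ and dictates the explicit lower bound $N_0$; analogously, the cap on $\epsilon$ via $\varepsilon_0$ is precisely what allows the supersolution inequality to be satisfied uniformly in $\epsilon$.
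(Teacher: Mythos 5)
Your outline coincides with the paper's: both take the subsolution $\omega_{\lambda,\epsilon}$ and a supersolution that is a constant multiple of a solution of the purely singular problem, and both exploit that $\lambda\in\left[\lambda_1/n^{q-p+1},\lambda_1/n^{q-p+1}+\varrho\right)$ with $n>N_0$ forces $\lambda$ (hence $\|\omega_{\lambda,\epsilon}\|_\infty$) to be small enough for the supersolution inequality. The paper scales the \emph{regularized} solution, taking $\overline{\omega}_{\lambda,\epsilon}=2^{1/(p-1)}\omega_{\lambda,\epsilon}$ and only afterwards invoking $\omega_{\lambda,\epsilon}\le\lambda^{1/(p-1+\delta)}\omega_{1,0}$; you scale $\omega_{1,0}$ itself. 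That difference is not cosmetic, and it is where your argument has a genuine gap: for $\delta\ge 1$ the function $\omega_{1,0}$ is not $C^1(\overline{\Omega})$, and for $\delta\ge(2p-1)/(p-1)$ it is not even in $W_0^{1,p}(\Omega)$ (the paper records only $\omega_{\lambda,0}\in W^{1,p}_{\mathrm{loc}}(\Omega)\cap L^{\infty}(\Omega)$). Hence ``a standard sub-supersolution theorem'' does not apply to the pair $(\omega_{\lambda,\epsilon},A\omega_{1,0})$: the usual truncated-functional argument must test against $(u_0-A\omega_{1,0})^+$, which is not obviously an admissible $W_0^{1,p}(\Omega)$ test function when $\nabla\omega_{1,0}\notin L^p(\Omega)$. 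You would need either a sub-supersolution theorem adapted to $W^{1,p}_{\mathrm{loc}}$ barriers (as the paper uses elsewhere via Loc--Schmitt) or an argument that $\{u_0>A\omega_{1,0}\}$ is compactly contained in $\Omega$. The paper's choice $M\omega_{\lambda,\epsilon}\in C_0^1(\overline{\Omega})$ (for $\epsilon>0$) is made precisely to avoid this.

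A second, smaller gap is the calibration you assert rather than verify. With $A=\varepsilon_0/\|\omega_{1,0}\|_\infty$ your supersolution condition reads $\lambda\le\lambda^*:=(\varepsilon_0/\|\omega_{1,0}\|_\infty)^{p-1+\delta}/(1+\varepsilon_0^{q+\delta})$, and the lemma needs $\lambda_1/N_0^{q-p+1}<\lambda^*$ so that $\varrho:=\lambda^*-\lambda_1/N_0^{q-p+1}>0$. Writing $N_0=B+1$ with $B^{q-p+1}=\lambda_1(\|\omega_{1,0}\|_\infty/\varepsilon_0)^{p-1+\delta}$, this amounts to $(1+1/B)^{q-p+1}>1+\varepsilon_0^{q+\delta}$, which need not hold (the left side tends to $1$ as $B$ grows while the right side is a fixed constant $>1$). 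The stated $\varepsilon_0$ and $N_0$ are tailored to the paper's two-inequality splitting (each term of the right-hand side separately dominated by half of $M^{p-1}(\omega_{\lambda,\epsilon}+\epsilon)^{-\delta}$, which works because $\|\omega_{\lambda,\epsilon}\|_\infty+\epsilon<2\varepsilon_0$), not to your single combined inequality; as written this step can fail and your constant $A$ or the reduction must be adjusted.
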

\begin{proof}
	Clearly the only solution $\omega_{ \lambda,\epsilon} \in W_0^{1,p}(\Omega)\cap C_0(\overline{\Omega})$ of the problem (\ref{sp}) is a subsolution of $(P_{\epsilon}^n)$. To construct a supersolution of $(P_{\epsilon}^n)$, let us consider $\overline{\omega}_{\lambda, \epsilon} = M\omega_{\lambda, \epsilon}$, where $M > 1$ will be chosen later. In order to $\overline{\omega}_{\lambda, \epsilon}$ be a supersolution of $(P_{\epsilon}^n)$, we must have
	\begin{eqnarray} \label{21}
		-\Delta_p \overline{\omega}_{\lambda,\epsilon} = {M^{p-1}\lambda}(\omega_{ \lambda,\epsilon,} +  \epsilon)^{-\delta} \geq {\lambda}\left[(M\omega_{\lambda,\epsilon,} + \epsilon)^{-\delta} + f_n(M\omega_{\lambda,\epsilon})\right] ~~\mbox{in} ~ \Omega.
	\end{eqnarray}
	
For the validity of the inequality (\ref{21}), it is sufficient that
\begin{equation}\label{zzz}\left\{\begin{array}{ccc}
\displaystyle\frac{M^{p-1}}{2(\omega_{\lambda,\epsilon} + \epsilon)^{\delta}} &\geq& \displaystyle\frac{1}{(\omega_{\lambda,\epsilon} + \epsilon)^{\delta}} \\
\displaystyle\frac{M^{p-1}}{2(\omega_{\lambda,\epsilon} + \epsilon)^{\delta}} &\geq& M^q(\omega_{\lambda, \epsilon} + \epsilon)^q 
\end{array}\right.
\end{equation}
hold.

Since $\omega_{\lambda, \epsilon}$ is a subsolution of (\ref{sp})
and $\omega_{\lambda, 0} = \lambda^{\frac{1}{p-1+\delta}}\omega_{1,0}$ is the only solution of (\ref{sp}) with $\epsilon = 0$, we can apply the comparison principle of \cite{Can, ZAMP} to conclude that
\begin{equation}\label{tststs}
\omega_{\lambda, \epsilon} \leq \lambda^{\frac{1}{p-1+\delta}}\omega_{1,0} ~\mbox{in} ~~\Omega.
\end{equation}

On the other hand, by the choice of $N_0$ and $\varepsilon_0$, we have
$${\left(\frac{\lambda_1}{n^{q-p+1}}\right)^{\frac{1}{p-1+\delta}}\|\omega_{1,0}\|_\infty + \epsilon < 2^{\frac{-q}{(q+\delta)(p-1)}} }$$
for all $0 < \epsilon < \varepsilon_0$ and $n > N_0$. Hence, for any $0 < \varrho < \left(\frac{\varepsilon_0}{\|\omega_{1,0}\|_\infty}\right)^{p-1+\delta} - \frac{\lambda_1}{N_0^{q-p+1}}$ and for all  $\lambda \in \left[\lambda_1/n^{q-p+1}, \lambda_1/n^{q-p+1} +\varrho\right)$
we obtain
\begin{equation}\label{ysysysys}
{ \lambda^{\frac{1}{p-1+\delta}}\|\omega_{1,0}\|_\infty + \epsilon < 2^{\frac{-q}{(q+\delta)(p-1)}},} 
\end{equation}
for all  $0 < \epsilon < \varepsilon_0$ and $n > N_0$.
 Thus, combining (\ref{tststs}) and (\ref{ysysysys}) one gets  
 \begin{equation}\label{lll}
 \|\omega_{\epsilon, \lambda}\|_{\infty} + \epsilon <2^{\frac{-q}{(q+\delta)(p-1)}},
 \end{equation}
 for all  $\lambda \in \left[\lambda_1/n^{q-p+1}, \lambda_1/n^{q-p+1} +\varrho\right)$, $\epsilon \in (0, \varepsilon_0) $ and $n > N_0$. So, by setting $M=2^{\frac{1}{p-1}}$, it follows from (\ref{lll}) that
 \begin{equation}\label{subsup}
 M<  \left(\frac{1}{2(\|\omega_{\epsilon, \lambda}\|_{\infty} + \epsilon)^{\delta + q}}\right)^{\frac{1}{q-p+1}}.
 \end{equation}
As a consequence of (\ref{subsup}), both inequalities in (\ref{zzz}) are fulfilled for such $M$, whence  $\overline{\omega}_{\epsilon, \lambda} = M{\omega}_{\epsilon, \lambda}$ is a supersolution of $(P_{\epsilon}^n)$. 
 
 Now, let us prove that for $\lambda \in \left[\lambda_1/n^{q-p+1}, \lambda_1/n^{q-p+1} +\varrho\right)$, $0 < \epsilon < \varepsilon_0$ and $n> N_0$, the problem $(P_{\epsilon}^n)$ admits a solution in $[\omega_{\lambda, \epsilon}, \overline{\omega}_{\lambda, \epsilon}]$. For this, consider
 $$g(x,t) = \left\{\begin{array}{lll}
 (\omega_{\lambda, \epsilon} + \epsilon)^{-\delta} + f_n(\omega_{\lambda, \epsilon}) &\mbox{if} &t < \omega_{\lambda, \epsilon}, \\
 (t + \epsilon)^{-\delta} + f_n(t) &\mbox{if} & \omega_{\lambda, \epsilon} \leq t \leq \overline{\omega}_{\lambda, \epsilon}, \\
 (\overline{\omega}_{\lambda, \epsilon} + \epsilon)^{-\delta} + f_n(\overline{\omega}_{\lambda, \epsilon}) &\mbox{if} & \overline{\omega}_{\lambda, \epsilon} < t
 \end{array}\right.
$$
and the functional $I : W_0^{1,p}(\Omega) \rightarrow \mathbb{R}$ given by
$$ I(u) = \frac{1}{p}\displaystyle\int_\Omega |\nabla u|^pdx - \lambda\displaystyle\int_\Omega G(x,u)dx, $$ 
where $G(x,t) = \int_0^t g(x,s)ds$. As usual, we can prove that $I \in C^1(W_0^{1,p}, \mathbb{R})$ is coercive and weakly lower semi-continuous. Hence,
$I$ achieves its global minimum at some $u_0 \in  W_0^{1,p}(\Omega)$, which satisfies 
$-\Delta_p u_0 = \lambda g(x,u_0)$ in $\Omega$.

Moreover, 
\begin{eqnarray*}
0 &\leq& \displaystyle\int_{\Omega} \left(|\nabla \omega_{\lambda, \epsilon}|^{p-2}\nabla \omega_{\lambda, \epsilon}- |\nabla u_0|^{p-2}\nabla u_0\right)\nabla (\omega_{\lambda, \epsilon} - u_0)^+dx \\
& \leq & \lambda\displaystyle\int_{[\omega_{\lambda, \epsilon} > u_0]} \left((\omega_{\lambda, \epsilon}+\epsilon)^{-\delta} + f_n(\omega_{\lambda, \epsilon}) - g(x,u_0)\right)(\omega_{\lambda, \epsilon} - u_0)^+dx = 0,
\end{eqnarray*}
whence $u_0 \geq \omega_{\lambda, \epsilon}$ in $\Omega$. In a similar way, we obtain $u_0 \leq \overline{\omega}_{\lambda, \epsilon}$ in $\Omega$. Therefore, $u_0$ is the solution claimed.
\end{proof}
\fim

\begin{lemma}\label{L7}
For all $0 \leq  \epsilon <  \frac{1}{2}\left(\frac{p-1+\delta}{q-p+1}\right)^{\frac{1}{q+\delta}} < n$ and for each $\lambda \in \displaystyle\mbox{Proj}_{\mathbb{R}} \Sigma_{\epsilon}^n$,  the problem $(P_{\epsilon}^n)$ admits at most a solution satisfying $\|u\|_\infty \leq \frac{1}{2}\left(\frac{p-1+\delta}{q-p+1}\right)^{\frac{1}{q+\delta}}.$
\end{lemma}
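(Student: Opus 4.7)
The plan is a Díaz--Saá uniqueness argument, exploiting that the sup-norm bound makes the truncation $f_n$ disappear. Set $\zeta := \left(\frac{p-1+\delta}{q-p+1}\right)^{1/(q+\delta)}$. Since $\|u\|_\infty \leq \zeta/2 < n$ forces $\min\{u,n\} = u$ and hence $f_n(u) = u^q$ in $\Omega$, every such solution of $(P_\epsilon^n)$ in fact solves the reduced equation
$$-\Delta_p u = \lambda\bigl[(u+\epsilon)^{-\delta} + u^q\bigr] \text{ in } \Omega, \qquad u|_{\partial \Omega} = 0.$$

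Suppose $u_1, u_2$ are two such solutions. For $\epsilon > 0$ they lie in $C^1_0(\overline{\Omega})$ and are strictly positive in $\Omega$ with $\partial_\nu u_i < 0$ on $\partial\Omega$ by the Hopf lemma. With $\Phi(t) := [(t+\epsilon)^{-\delta} + t^q]/t^{p-1}$, the Díaz--Saá inequality gives
$$\lambda\int_\Omega \bigl[\Phi(u_1) - \Phi(u_2)\bigr]\bigl(u_1^p - u_2^p\bigr)\,dx \geq 0.$$
For $\epsilon = 0$ the same inequality is obtained by applying Díaz--Saá first to $u_i + \eta$ (or equivalently to the cutoffs $(u_i - \eta)^+ \in W^{1,p}_0(\Omega)$) and sending $\eta \to 0^+$; the integrability needed near $\partial\Omega$ is provided by the lower bound $u_i \geq \omega_{\lambda,0}$ (since every solution of the full problem is a supersolution of the purely singular one) combined with the outer bound $u_i \leq \zeta/2$. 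Once this inequality is in hand, showing that $\Phi$ is strictly decreasing on $(0, \zeta/2]$ makes the integrand pointwise $\leq 0$, with strict inequality wherever $u_1 \neq u_2$; the displayed integral then vanishes and $u_1 \equiv u_2$.

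The main obstacle is the one-variable monotonicity of $\Phi$. A direct differentiation reduces $\Phi'(t) < 0$ to the scalar inequality
$$(q-p+1)\, t^q (t+\epsilon)^{\delta+1} \;<\; (p-1+\delta)\, t + (p-1)\, \epsilon, \qquad t \in (0,\zeta/2],\ \epsilon \in [0, \zeta/2).$$
Both the factor $1/2$ in the norm bound and the defining identity $(q-p+1)\zeta^{q+\delta} = p-1+\delta$ enter decisively here: from $t \leq \zeta/2$ and $t+\epsilon \leq \zeta$ the left-hand side is controlled by $(p-1+\delta)\zeta/2^q$, while the right-hand side stays at least of order $(p-1)(t+\epsilon)$. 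The elementary, but slightly delicate, algebra closing this estimate uses exactly the way in which the constants on the two sides of the stated inequality are tuned to the critical value $\zeta$; this is where I expect the bulk of the technical work to live.
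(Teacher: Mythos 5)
Your route is the same as the paper's: the sup--norm bound kills the truncation $f_n$, one shows that $\Phi(t)=\bigl[(t+\epsilon)^{-\delta}+t^{q}\bigr]/t^{p-1}$ is decreasing on the relevant range, and D\'iaz--Sa\'a gives uniqueness. Your reduction of $\Phi'(t)<0$ to the scalar inequality
$$(q-p+1)\,t^{q}(t+\epsilon)^{\delta+1}\;<\;(p-1+\delta)\,t+(p-1)\,\epsilon$$
is correct (it is exactly the bracket in the paper's computation of $g_\epsilon'$). The gap is that you never prove this inequality, and in fact it is \emph{false} on the stated range when $q<1$, which is admissible whenever $p<2$ since only $q>p-1$ is assumed. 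Concretely, take $p=1.01$, $q=1/2$, $\delta=1$: then $\zeta=(1.01/0.49)^{2/3}\approx 1.62$, so $\zeta/2\approx 0.81$; with $\epsilon=0.8$ and $t=0.008$ the left side is $\approx 0.0286$ while the right side is $\approx 0.0161$, so $\Phi'(t)>0$ and $\Phi$ is in fact increasing on a sizeable portion of $(0,\zeta/2]$. Your heuristic (left side controlled by $(p-1+\delta)\zeta\,2^{-q}$, right side at least of order $(p-1)(t+\epsilon)$) cannot close precisely because the first bound is a constant while $(p-1)(t+\epsilon)$ can be small when $p$ is near $1$; the estimate that does close it, $t^{q-1}\le(t+\epsilon)^{q-1}$ followed by $(t+\epsilon)^{q+\delta}<\zeta^{q+\delta}$, is only available for $q\ge 1$. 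Since D\'iaz--Sa\'a genuinely requires the monotonicity of $\Phi$, the argument collapses at this point for $q<1$ and $\epsilon$ comparable to $\zeta/2$.

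Two further remarks. First, the paper's own proof has the identical defect: the displayed step $(q-p+1)t^{q-1}(t+\epsilon)^{1+\delta}<(q-p+1)(t+\epsilon)^{q+\delta}$ silently assumes $q\ge 1$. So your argument is complete for $q\ge1$, and for $\epsilon=0$ (where $\Phi'(t)<0$ on all of $(0,\zeta)$ by direct computation); for $q<1$ it can only be salvaged for $\epsilon$ small depending on $(p,q,\delta)$ --- which happens to be the regime where the lemma is later applied, since Lemma \ref{L6} works with $\epsilon<\varepsilon_0=2^{-q/((q+\delta)(p-1))-1}$ --- but not for the full range $\epsilon<\zeta/2$ in the statement. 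Second, your treatment of the $\epsilon=0$ case by applying D\'iaz--Sa\'a to $u_i+\eta$ and letting $\eta\to0^+$ is a sensible supplement that the paper omits, though for $\epsilon=0$ one must also justify the $L^\infty$ bounds on the quotients $u_1/u_2$, $u_2/u_1$ near $\partial\Omega$, which your lower bound $u_i\ge\omega_{\lambda,0}$ alone does not give without a matching two-sided boundary estimate.
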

\begin{proof} Define $$g_{\epsilon}(t) = \frac{(t+\epsilon)^{-\delta} + f_n(t)}{t^{p-1}}, ~~~\mbox{for} ~~ t > 0. $$
If $0 < t \leq n $, then $g_{\epsilon}$ becomes $g_{\epsilon}(t) = t^{1-p}(t+\epsilon)^{-\delta} + t^{q-p+1}$, whose derivative is
\begin{eqnarray*}
g'_{\epsilon}(t) &=& 
(1-p)t^{-p}(t+\epsilon)^{-\delta} -\delta t^{1-p}(t+\epsilon)^{-\delta -1} + (q-p+1)t^{q-p} \\
& = &t^{-p}(t+\epsilon)^{-\delta -1} \left[(1-p)(t+\epsilon) - \delta t + (q-p+1)t^{q}(t+\epsilon)^{1+\delta}\right] \\
&=& t^{-p}(t+\epsilon)^{-\delta -1} \left[t(1-p-\delta)) + \epsilon(1-p) + (q-p+1)t^{q}(t+\epsilon)^{1+\delta}\right] \\
&< & t^{-p}(t+\epsilon)^{-\delta -1}\left[t(1-p-\delta))  +(q-p+1)t^{q}(t+\epsilon)^{1+\delta} \right]. \end{eqnarray*}
Thus, for all $0 < t \leq n$ satisfying 
\begin{equation}\label{228}
t+\epsilon \leq \left(\frac{p-1+\delta}{q-p+1}\right)^{\frac{1}{q+\delta}},
\end{equation}
we have
$$
(q-p+1)t^{q-1}(t+\epsilon)^{1+\delta}  < (q-p+1)(t+\epsilon)^{q+\delta} <  (p-1+\delta), 
$$
which implies in $g_{\epsilon}'(t) < 0$. In particular, if
$$0 \leq  \epsilon <  \frac{1}{2}\left(\frac{p-1+\delta}{q-p+1}\right)^{\frac{1}{q+\delta}} ~~~\mbox{and} ~~~~0 < t <  \frac{1}{2}\left(\frac{p-1+\delta}{q-p+1}\right)^{\frac{1}{q+\delta}},$$ then (\ref{228}) holds and as a result
 $g_{\epsilon}$ is decreasing. Hence, we can employ the uniqueness result of D\'iaz-Saa (see Theorem 1 in \cite{DIAZ}) to conclude that for each $\lambda \in \displaystyle\mbox{Proj}_{\mathbb{R}} \Sigma_{\epsilon}^n$, the problem $(P_{\epsilon}^n)$ has a unique solution satisfying $\|u\|_{\infty} \leq \frac{1}{2}\left(\frac{p-1+\delta}{q-p+1}\right)^{\frac{1}{q+\delta}}$. 

\end{proof}
\fim

Combining the previous lemmas, we have the following.
\begin{lemma}\label{L8}
There exist $\varepsilon_1 > 0$ (independent of $n>0$) and $N_1 > 0$ (independent of $\epsilon>0$) such that $ \left(\displaystyle\mbox{Proj}_{\mathbb{R}} \Sigma_{\epsilon}^n \displaystyle\cap ({\lambda_1}/{n^{q-p+1}}, +\infty) \right) \times C_0(\overline{\Omega})_+ \neq \emptyset$, for all $0 < \epsilon < \varepsilon_1$ and $n > N_1$.
\end{lemma}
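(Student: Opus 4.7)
The plan is to use Lemmas \ref{L6} and \ref{L7} to build an explicit continuous curve of small solutions of $(P_{\epsilon}^n)$ that links $(0,0)$ to a region with $\lambda > \lambda_1/n^{q-p+1}$, and then to deduce by connectedness that this curve must lie inside $\Sigma_{\epsilon}^n$.

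Write $C_0 := \frac{1}{2}\left(\frac{p-1+\delta}{q-p+1}\right)^{1/(q+\delta)}$ and $M := 2^{1/(p-1)}$. First, I would set $\varepsilon_1 := \min\{\varepsilon_0, C_0\}$ and, shrinking $\varrho$ from Lemma \ref{L6} if necessary, choose $N_1 \geq N_0$ large enough that for every $n > N_1$ and every $\lambda \in [0, \lambda_1/n^{q-p+1}+\varrho)$,
$$M\,\|\omega_{1,0}\|_\infty\, \lambda^{1/(p-1+\delta)} < C_0.$$
This is feasible since $\lambda_1/n^{q-p+1}\to 0$ as $n\to\infty$ and $\varrho$ may be taken arbitrarily small.

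Second, inspecting the proof of Lemma \ref{L6}, the sub-supersolution pair $(\omega_{\lambda,\epsilon},\, M\omega_{\lambda,\epsilon})$ is admissible for every $\lambda \in [0, \lambda_1/n^{q-p+1}+\varrho)$ (not just for $\lambda$ in the displayed interval), because the crucial size bound (\ref{lll}) is monotone in $\lambda$. Hence for every such $\lambda$ we obtain a solution $u_\lambda$ of $(P_{\epsilon}^n)$ with $\omega_{\lambda,\epsilon}\le u_\lambda \le M\omega_{\lambda,\epsilon}$; combined with (\ref{tststs}) this yields $\|u_\lambda\|_\infty \le M\|\omega_{1,0}\|_\infty \lambda^{1/(p-1+\delta)} < C_0$. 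Lemma \ref{L7} then identifies $u_\lambda$ as the unique solution of $(P_{\epsilon}^n)$ with $\|u\|_\infty \le C_0$.

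Third, I would prove that $\lambda\mapsto u_\lambda$ is continuous into $C_0(\overline{\Omega})$: if $\lambda_k\to\lambda_*$, then $\{u_{\lambda_k}\}$ is uniformly bounded by $C_0$, so the right-hand side $\lambda_k[(u_{\lambda_k}+\epsilon)^{-\delta}+f_n(u_{\lambda_k})]$ is bounded in $L^\infty(\Omega)$; Lieberman's $C^{1,\beta}$-estimate combined with Arzel\`a--Ascoli gives a subsequential $C^1$-limit $\tilde u$ solving $(P_{\epsilon}^n)$ at $\lambda_*$ with $\|\tilde u\|_\infty\le C_0$, and uniqueness from Lemma \ref{L7} forces $\tilde u = u_{\lambda_*}$. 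Together with $\|u_\lambda\|_\infty \to 0$ as $\lambda\to 0^+$, this makes
$$\mathcal{B} := \{(\lambda, u_\lambda): 0 < \lambda < \lambda_1/n^{q-p+1}+\varrho\}\cup\{(0,0)\}$$
a connected subset of the solution set in $\mathbb{R}^+\times C_0(\overline{\Omega})$.

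Finally, since $\Sigma_{\epsilon}^n$ is the Rabinowitz continuum through $(0,0)$ supplied by Lemma \ref{L1}, it is (or may be taken to be) the connected component of the solution set through $(0,0)$; hence $\mathcal{B}\cup \Sigma_\epsilon^n$ is connected in the solution set and meets $(0,0)$, forcing $\mathcal{B}\subseteq \Sigma_\epsilon^n$. Any $\lambda\in(\lambda_1/n^{q-p+1},\lambda_1/n^{q-p+1}+\varrho)$ then exhibits a point of $\Sigma_{\epsilon}^n$ with first coordinate strictly exceeding $\lambda_1/n^{q-p+1}$, proving the lemma. I expect the most delicate step to be the continuity argument in step three together with the identification of $\mathcal{B}$ with a subset of the maximal component $\Sigma_\epsilon^n$; the rest reduces to organizing the a priori bounds produced by Lemmas \ref{L6} and \ref{L7}.
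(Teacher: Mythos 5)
Your proposal is correct and takes essentially the same route as the paper: both arguments rest on the existence, from Lemma \ref{L6}, of solutions for every $\lambda$ up to $\lambda_1/n^{q-p+1}+\varrho$, combined with the choice of $\varepsilon_1$, $N_1$ and a reduced $\varrho$ forcing those solutions below the uniqueness threshold $\frac{1}{2}\left(\frac{p-1+\delta}{q-p+1}\right)^{1/(q+\delta)}$ of Lemma \ref{L7}. The paper compresses the final step into ``combine Lemmas \ref{L6} and \ref{L7}'', while you make explicit the continuity-and-maximal-component argument that places the resulting branch of small solutions inside $\Sigma_\epsilon^n$; this is a faithful elaboration of the intended proof rather than a different one (noting only that you need the uniqueness of Lemma \ref{L7} for all $\lambda$ in the interval, which its D\'iaz--Saa proof indeed provides even though the statement restricts to $\lambda\in\mbox{Proj}_{\mathbb{R}}\Sigma_\epsilon^n$).
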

\begin{proof}
Let $\varepsilon_0$, $N_0$, $K$ and $\varrho$ be the constantes introduced in Lemma \ref{L6}. In Lemma \ref{L6} we proved that $(P_{\epsilon}^n)$ admits a positive solution for all 
$\lambda \in \left[ \frac{\lambda_1}{n^{q-p+1}}, \frac{\lambda_1}{n^{q-p+1}} + \varrho\right)$, whenever $0 < \epsilon < \varepsilon_0$ and $n> N_0$. Moreover, for $\lambda \in \left[ \frac{\lambda_1}{n^{q-p+1}}, \frac{\lambda_1}{n^{q-p+1}} + \varrho\right)$ the solution obtained there, say u, satisfies
\begin{equation}\label{eq116}\|u\|_\infty \leq 2^{\frac{1}{p-1}}\lambda^{\frac{1}{p-1+\delta}}\|\omega_{1,0}\|_{\infty}. \end{equation}
Thus, by taking $\varepsilon_1 = \min\left\{\varepsilon_0, \frac{1}{2}\left(\frac{p-1+\delta}{q-p+1}\right)^{\frac{1}{q+\delta}} \right\}$,  $$ N_1 = \max\left\{N_0, \frac{1}{2}\left(\frac{p-1+\delta}{q-p+1}\right)^{\frac{1}{q-p}}, \left(2^{\frac{p}{p-1}}\left(\frac{q-p+1}{p-1+\delta}\right)^{\frac{1}{q+\delta}}\lambda_1^{\frac{1}{p-1+\delta}}\|\omega_{1,0}\|_{\infty}\right)^{\frac{p-1+\delta}{q-p+1}} \right\}, $$
	and reducing $\varrho$, if it is necessary, we conclude  from (\ref{eq116}) that $\|u\|_\infty < \frac{1}{2}\left(\frac{p-1+\delta}{q-p+1}\right)^{\frac{1}{q+\delta}}$,  whenever $0 < \epsilon < \varepsilon_1$, $n > N_1$ and  $\lambda \in \left[ \frac{\lambda_1}{n^{q-p+1}}, \frac{\lambda_1}{n^{q-p+1}} + {\varrho}\right).$ Indeed, for $n > N_1$, $\epsilon \in (0, \varepsilon_1)$ and for all $\lambda \in \left[ \frac{\lambda_1}{n^{q-p+1}}, \frac{\lambda_1}{n^{q-p+1}} + \varrho\right)$ one has
	$$\lambda^{\frac{1}{p-1+\delta}}\|\omega_{1,0}\|_\infty + \epsilon < 2^{\frac{-q}{(q+\delta)(p-1)}} $$
	and 
	$$\|u\|_\infty+\epsilon \leq \left(\frac{p-1+\delta}{q-p+1}\right)^{\frac{1}{q+\delta}}.$$		
	 After all these, the result follows by combining Lemmas \ref{L6} and \ref{L7}.
\end{proof}
\fim
\vspace{0.3cm}

 The previous lemmas  suggest that $\Sigma_{\epsilon}^n$ may have one of the shapes given in the  figure below.

\begin{figure}[h!]
	\centering
	\begin{tikzpicture}[scale=0.70]
	
\draw[thick, ->] (-14, 0) -- (-5, 0);
	\draw[thick, ->] (-13, -1) -- (-13, 8);
	\draw[very thick] (-13.0 ,0.0) .. controls (-6.0,1.0) and (-8.0,3.0) .. (-9.0, 4.0);
	\draw[very thick] (-9.0 ,4) .. controls (-9.55, 5.0) and (-9,5.5) .. (-8.88, 8.0);
	\draw[very thick, dotted] (-8.83,0) -- (-8.83, 8.0);
	\draw (-5,0) node[below]{$\lambda$};
	\draw (-8.83,0) node[below]{$\frac{\lambda_1}{n^{q-p+1}}$};
	\draw (-13, 0) node[below left]{$(0,0)$};
	\draw (-13,8) node[left]{$\|u\|_\infty$};
	\draw (-7.83,3) node[right]{{\Large $\Sigma_{\epsilon}^n$}};
	\draw[thick, ->] (0, 0) -- (9, 0);
	\draw[thick, ->] (1, -1) -- (1, 8);
	\draw[very thick] (1.0 ,0.0) .. controls (8.0,1.0) and (7.0,3.0) .. (5.8, 4.0);
	\draw[very thick] (5.8 ,4) .. controls (5.1, 4.9) and (5.25,5.6) .. (5.24, 8.0);
	\draw[very thick, dotted] (5.17,0) -- (5.17, 8.0);
	\draw (9,0) node[below]{$\lambda$};
	\draw (5.17,0) node[below]{$\frac{\lambda_1}{n^{q-p+1}}$};
	\draw (1, 0) node[below left]{$(0,0)$};
	\draw (1,8) node[left]{$\|u\|_\infty$};
	\draw (7,3) node[right]{{\Large $\Sigma_{\epsilon}^n$}};
	\draw (-7.5,-2) node[right]{Fig.1. Possible bifurcation diagrams of $\Sigma_{\epsilon}^n$};
	\end{tikzpicture}
\end{figure}

\section{Asymptotic singular problem}
The unbounded connected set of solutions
 of $(P)$ will be obtained through limit process of $\Sigma_{\epsilon}^n$ as $n \rightarrow +\infty$ and $\epsilon \rightarrow 0^+$. 
 \begin{definition}\label{D3}	Let $X$ be a Banach space and let $\{\Sigma_n \}_{n=1}^{\infty}$ be a family of subsets of $X$. The set of all
 	points $x \in X$  such that every neighborhood of $x$ contains points of infinitely many
 	sets $\{\Sigma_n \}_{n=1}^{\infty}$ is called the limit superior of $\{\Sigma_n \}_{n=1}^{\infty}$ and is written $\displaystyle\lim_{n \rightarrow \infty}\sup \Sigma_n$. The set
 	of all points $y$ such that every neighborhood of $y$ contains points of all but a finite number of
 	the sets of $\{\Sigma_n \}_{n=1}^{\infty}$ is called the limit inferior of $\{\Sigma_n \}_{n=1}^{\infty}$ and is written $\displaystyle\lim_{n \rightarrow \infty}\inf \Sigma_n$.
 \end{definition}
\begin{lemma}\label{L9}(\cite{DAI})
	Let $X$ be a normal space and let $\{\Sigma_n \}_{n=1}^{\infty}$ be a sequence of unbounded connected
	subsets of $X$. Assume that:
	\begin{itemize}
		\item[i)] there exists $z^* \in \displaystyle\lim_{n \rightarrow \infty}\inf \Sigma_n$ with $\|z^*\| < +\infty$, 
		\item[ii)] for every $R>0$, $\left(\displaystyle\bigcup_{n=1}^{+\infty}\Sigma_n\right) \cap \overline{B_R(0)}$ is a relatively compact set of $X$, where
		$\overline{B_R(0)}  = \{x \in X ~: ~\|x\| \leq R\}.$
		\end{itemize}
		Then $C = \displaystyle\lim_{n \rightarrow \infty}\sup \Sigma_n$ is unbounded, closed and connected set.
\end{lemma}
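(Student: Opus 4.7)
The plan is to verify that $C = \displaystyle\lim_{n\to\infty}\sup\Sigma_n$ is closed, unbounded, and connected, in that order; connectedness is the main technical obstacle. Closedness is immediate from the definition of limit superior: if $C \ni x_k \to x$, every neighborhood of $x$ is also a neighborhood of some $x_k$ and therefore meets infinitely many of the $\Sigma_n$; hence $x \in C$.

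For unboundedness, I exploit the pincer formed by $z^* \in \displaystyle\lim_{n\to\infty}\inf\Sigma_n$ (at finite norm) and the unboundedness of each $\Sigma_n$. The continuous map $x \mapsto \|x - z^*\|$ sends the connected set $\Sigma_n$ to a connected subset of $[0,\infty)$ that, for all large $n$, contains values arbitrarily close to $0$ (by hypothesis (i)) and arbitrarily large ones (by unboundedness); hence every $R > 0$ is attained. Picking $x_n \in \Sigma_n$ with $\|x_n - z^*\| = R$, hypothesis (ii) places $\{x_n\}$ in a relatively compact set, and a subsequential limit $x^R \in C$ satisfies $\|x^R - z^*\| = R$. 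Letting $R \to \infty$ exhibits unbounded elements of $C$.

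For connectedness, I argue by contradiction. Suppose $C = A \cup B$ with $A, B$ nonempty, disjoint, closed in $X$ (the last automatic from the closedness step), and fix $z^* \in A$ together with some $b \in B$. By normality of $X$, Urysohn's lemma delivers a continuous $\varphi : X \to [0,1]$ with $\varphi|_A \equiv 0$ and $\varphi|_B \equiv 1$; set $V := \varphi^{-1}([1/3, 2/3])$, closed in $X$ and disjoint from $C$. Choose $R > \max\{\|z^*\|, \|b\|\}$ and let $K := \overline{\big(\bigcup_n \Sigma_n\big) \cap \overline{B_R(0)}}$, compact by (ii). Along a subsequence $\{\Sigma_{n_k}\}$ the liminf/limsup hypotheses supply $a_k, b_k \in \Sigma_{n_k} \cap B_R(0)$ with $\varphi(a_k) \to 0$ and $\varphi(b_k) \to 1$.

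The principal task is to exhibit a bounded witness $c_k \in \Sigma_{n_k} \cap V \cap K$. Connectedness of $\Sigma_{n_k}$ together with continuity of $\varphi$ immediately gives $c_k \in \Sigma_{n_k}$ with $\varphi(c_k) = 1/2$, hence $c_k \in V$; the difficulty lies in confining $c_k$ to $\overline{B_R(0)}$. A standard boundary-bumping argument applied to the connected component of $\Sigma_{n_k} \cap \{\varphi < 1/3\} \cap \overline{B_R(0)}$ containing $a_k$ forces this component to meet $\varphi^{-1}(\{1/3\}) \subset V$ inside $\overline{B_R(0)}$, since otherwise one could extend it to a clopen proper subset of $\Sigma_{n_k}$, contradicting connectedness. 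This produces the required $c_k \in V \cap K$. Compactness of $K$ then yields a subsequential limit $c \in V \cap K$; by the limsup definition $c \in C$, contradicting $V \cap C = \emptyset$. This contradiction establishes the connectedness of $C$.
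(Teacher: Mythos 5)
The paper offers no proof of this lemma --- it is imported verbatim from \cite{DAI} --- so your argument can only be judged on its own terms. Your closedness argument is the standard one, and your unboundedness argument is correct: the image of the connected set $\Sigma_n$ under $x\mapsto\|x-z^*\|$ is an interval containing, for large $n$, values near $0$ and arbitrarily large values, and hypothesis (ii) applied in the ball of radius $R+\|z^*\|$ lets you pass to the limit on points realizing the value $R$.

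The connectedness argument, however, breaks at exactly the step you flag as ``the principal task'', and the break is not repairable. The boundary, relative to $\Sigma_{n_k}$, of the set $\Sigma_{n_k}\cap\{\varphi<1/3\}\cap B_R(0)$ is contained in $\varphi^{-1}(\{1/3\})\cup\{x:\|x\|=R\}$, and the clopen-extension argument only forces the component of $a_k$ to reach this \emph{union}; nothing prevents it from reaching only the sphere $\{\|x\|=R\}$, i.e.\ $\Sigma_{n_k}$ may leave the region $\{\varphi<1/3\}$ only after leaving $\overline{B_R(0)}$, and cross over to $\{\varphi>2/3\}$ entirely outside the ball. This genuinely happens: in $X=\mathbb{R}^2$ take
$$\Sigma_n=\Big(\{1/n\}\times[0,n]\Big)\cup\Big([1/n,10]\times\{n\}\Big)\cup\Big(\{10\}\times[0,\infty)\Big).$$
Each $\Sigma_n$ is closed, connected and unbounded, $(0,0)\in\displaystyle\lim_{n\rightarrow\infty}\inf\Sigma_n$, and (ii) is automatic in finite dimension; yet $\displaystyle\lim_{n\rightarrow\infty}\sup\Sigma_n=(\{0\}\times[0,\infty))\cup(\{10\}\times[0,\infty))$ is disconnected, because the arcs joining the two limiting rays escape every fixed ball. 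So the statement as transcribed cannot be proved: when both pieces $A$ and $B$ of the putative separation are unbounded, no choice of $R$ confines a crossing point $c_k$.

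What is true --- and all that the applications in Proposition \ref{P1} and Theorem \ref{T1} actually require --- is that the connected component $\mathcal{C}$ of $\displaystyle\lim_{n\rightarrow\infty}\sup\Sigma_n$ containing $z^*$ is closed and unbounded. Your scheme does prove this version once the Urysohn separation is replaced by a \emph{bounded} one: if $\mathcal{C}\subseteq B_{R_0}(0)$, then $\mathcal{C}$ coincides with the quasicomponent of $z^*$ in the compact set $C\cap\overline{B_{R_0+2}(0)}$, hence sits inside a relatively clopen subset $E\subseteq B_{R_0+1}(0)$ of that set; choosing disjoint open sets $U\supseteq E$ with $U$ bounded and $W\supseteq (C\cap\overline{B_{R_0+2}(0)})\setminus E$, each $\Sigma_n$ with $n$ large meets $U$ near $z^*$ and, being unbounded, leaves $\overline{U}$, so it meets the bounded set $\partial U$; your compactness extraction then yields a limit point of $C$ on $\partial U$, disjoint from $E\cup W$, a contradiction. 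I recommend proving and using that component version rather than the statement as written.
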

\begin{lemma}\label{L11} (\cite{DAI})
	Let $(X, \|\cdot \|)$ be a normed vector space and $\{\Sigma_n \}_{n=1}^{\infty}$ a sequence of unbounded sets whose limit superior is $C$ and satisfies the following conditions:
	\begin{itemize}
		\item[i)] there exists $z^* \in C $ with $\|z^*\| <+\infty$,
		\item[ii)] $\left(\displaystyle\bigcup_{n=1}^\infty \Sigma_n \right)\cap \overline{B_R(z^*)}$	
		is a relatively compact, for every $R> 0$.
	\end{itemize}
	Then, for each $\epsilon > 0$ there exists an $m \in \mathbb{N}$ such that  $\Sigma_n \subset V_{\epsilon}(C)$ for all $n > m$,
	where $V_{\epsilon}(C) = \{y \in X ~: ~\mbox{dist}(y,C) < \epsilon\}$.
	
\end{lemma}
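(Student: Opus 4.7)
The plan is to argue by contradiction, using hypothesis (ii) as the compactness lever on balls centered at $z^\star$ and the definition of $C=\limsup\Sigma_n$ to identify cluster points as lying in $C$. Assume the conclusion fails: there exist $\varepsilon_0>0$ and, along a subsequence, points $x_n\in\Sigma_n$ with $\mathrm{dist}(x_n,C)\ge\varepsilon_0$; the goal is to extract a subsequence $x_{n_k}\to x^\star\in C$, which contradicts the distance bound.

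If the bad points $\{x_n\}$ can be arranged to lie in a fixed ball $\overline{B_R(z^\star)}$, the argument is essentially one line. By (ii) the sequence is relatively compact, so $x_{n_k}\to x^\star$ along a subsequence; every neighborhood of $x^\star$ contains $x_{n_k}\in\Sigma_{n_k}$ for all large $k$ and hence meets infinitely many $\Sigma_n$, placing $x^\star$ in $\limsup\Sigma_n=C$ directly from the definition of limit superior. This contradicts $\mathrm{dist}(x_{n_k},C)\ge\varepsilon_0$, and as a byproduct yields the localized containment $\Sigma_n\cap\overline{B_R(z^\star)}\subset V_\varepsilon(C)$ eventually, for each fixed $R$.

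The harder part, and the main obstacle, is ruling out the case $\|x_n-z^\star\|\to\infty$, for which the statement genuinely needs the continuum (i.e.\ connected) structure of the $\Sigma_n$ that is implicit throughout this paper (cf.\ Lemma \ref{L1}). Because $z^\star\in C$, one picks $y_n\in\Sigma_n$ with $y_n\to z^\star$; applying the intermediate value theorem to the continuous function $x\mapsto\|x-z^\star\|$ on the connected set $\Sigma_n$ yields, for each fixed $R>\|z^\star\|+1$, a crossing point $w_n\in\Sigma_n$ with $\|w_n-z^\star\|=R$. These bounded crossings feed into (ii) and the bounded case above to propagate the localized containment outward, forcing any sufficiently far-out point of $\Sigma_n$ eventually to remain within $V_\varepsilon(C)$, and contradicting the existence of the unbounded bad $x_n$. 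Without connectedness this reduction would fail, since one could attach isolated stray points to each $\Sigma_n$ that never approach $C$; this is precisely why the continuum structure is indispensable, while all the rest is a direct reading of the definition of $\limsup$ together with hypothesis (ii).
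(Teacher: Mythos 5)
The paper gives no proof of this lemma --- it is quoted verbatim from \cite{DAI} --- so there is nothing internal to compare your argument against; I can only assess it on its own terms. Your bounded case is correct and complete: if bad points $x_{n_k}\in\Sigma_{n_k}\setminus V_{\varepsilon_0}(C)$ stay in a fixed ball $\overline{B_R(z^*)}$, hypothesis (ii) yields a convergent subsequence whose limit lies in $\limsup_n\Sigma_n=C$ (the indices $n_k$ are distinct, so every neighbourhood of the limit meets infinitely many $\Sigma_n$), contradicting $\mathrm{dist}(x_{n_k},C)\ge\varepsilon_0$. What this delivers is exactly the \emph{localized} containment: for each fixed $R$ there is $m(R)$ such that $\Sigma_n\cap\overline{B_R(z^*)}\subset V_\varepsilon(C)$ for all $n>m(R)$.

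The unbounded case is where your argument has a genuine gap. The crossing points $w_n$ with $\|w_n-z^*\|=R$ produced by connectedness only show that $C$ meets the sphere of radius $R$ (essentially the unboundedness of $C$, i.e.\ the content of Lemma \ref{L9}); they say nothing about the points of $\Sigma_n$ lying \emph{outside} $\overline{B_R(z^*)}$. Your step ``propagate the localized containment outward'' is precisely an interchange of quantifiers --- the conclusion needs one $m$ valid for all $R$, while the bounded case gives $m(R)$ depending on $R$ --- and no argument can supply this from (i)--(ii) plus connectedness alone. Counterexample: in $\mathbb{R}^2$ take $\Sigma_n=\bigl(\{0\}\times[0,\infty)\bigr)\cup\bigl([0,n]\times\{0\}\bigr)\cup\bigl(\{n\}\times[0,\infty)\bigr)$. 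Each $\Sigma_n$ is unbounded and connected, $C=\limsup_n\Sigma_n=\bigl(\{0\}\times[0,\infty)\bigr)\cup\bigl([0,\infty)\times\{0\}\bigr)$, $z^*=(0,0)\in C$, and $\bigl(\bigcup_n\Sigma_n\bigr)\cap\overline{B_R(z^*)}$ is compact for every $R>0$; yet $(n,1)\in\Sigma_n$ has distance $1$ from $C$, so $\Sigma_n\not\subset V_{1/2}(C)$ for every $n$. Hence the global conclusion cannot follow from the hypotheses you (and the lemma as quoted) invoke, and your proof cannot be completed as written; only the localized version is provable at this level of generality. A smaller point: $z^*\in C=\limsup_n\Sigma_n$ only provides approximants $y_n\in\Sigma_n$ with $y_n\to z^*$ along a subsequence of indices, which need not contain your bad indices; in the paper's application one actually has $z^*=(0,0)\in\liminf_n\Sigma^n_{\epsilon}$, which is what that step of your reduction requires.
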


\begin{proposition}\label{P1}
	For each $0 < \epsilon < \varepsilon_1$, the problem	
	$$
(P_{\epsilon})~	\left\{
	\begin{array}{l}
	-\Delta_pu = {\lambda }\left[(u+\epsilon)^{-\delta} + u^{q}\right]  ~ \mbox{in } \Omega,\\
	u>0 ~ \mbox{in }\Omega,~~
	u=0  ~ \mbox{on }\partial\Omega
	\end{array}
	\right.
$$
	admits a continuum $\Sigma_{\epsilon} \subset W_0^{1,p}(\Omega)\cap C_0(\overline{\Omega})$ of positive solutions, which is bounded in the $\lambda$-direction, emanates from $(0,0)$ and bifurcates from infinity at $\lambda =0$.  
	\end{proposition}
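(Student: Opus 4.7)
The plan is to construct $\Sigma_\epsilon$ as the topological limit superior of the sequence of continua $\{\Sigma_\epsilon^n\}_{n > N_1}$ produced in Section 2. Fixing $0 < \epsilon < \varepsilon_1$ and $X = \mathbb{R} \times C_0(\overline{\Omega})$, each $\Sigma_\epsilon^n$ is unbounded, connected, passes through $(0,0)$, and lies in $\mathbb{R}^+ \times C_0(\overline{\Omega})$ by Lemma \ref{L1}, so Lemma \ref{L9} is the natural tool. The common point $z^* = (0,0)$ belongs to $\displaystyle\lim_{n \rightarrow \infty}\inf \Sigma_\epsilon^n$, yielding hypothesis (i). For hypothesis (ii) I would show that, given $R>0$, any $(\lambda, u) \in \Sigma_\epsilon^n$ with $|\lambda| + \|u\|_\infty \leq R$ satisfies the uniform-in-$n$ bound $\lambda\left[(u + \epsilon)^{-\delta} + f_n(u)\right] \leq R(\epsilon^{-\delta} + R^q)$; the assumption $\epsilon > 0$ is essential to control the singular term. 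Lieberman's regularity (\cite{Lieberman}, Theorem 1) then provides a uniform $C^{1,\beta}(\overline{\Omega})$ estimate and Arzel\`a-Ascoli delivers the required compactness in $C_0(\overline{\Omega})$.

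Lemma \ref{L9} then yields $\Sigma_\epsilon := \displaystyle\lim_{n \rightarrow \infty}\sup \Sigma_\epsilon^n$ unbounded, closed and connected, with $(0,0) \in \Sigma_\epsilon$. The next step is to show that every $(\lambda, u) \in \Sigma_\epsilon$ is a positive solution of $(P_\epsilon)$. Taking $(\lambda_{n_k}, u_{n_k}) \in \Sigma_\epsilon^{n_k}$ converging to $(\lambda, u)$ in $X$, the $C^{1,\beta}$ bound also yields $u_{n_k} \rightarrow u$ in $C^1(\overline{\Omega})$ along a subsequence; for $k$ large, $\|u_{n_k}\|_\infty < n_k$ forces $f_{n_k}(u_{n_k}) = u_{n_k}^q$ while $(u_{n_k}+\epsilon)^{-\delta}$ stays uniformly bounded, so dominated convergence allows passing to the weak formulation, and Vazquez's strong maximum principle gives $u > 0$ whenever $\lambda > 0$. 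The natural regularity $u \in W_0^{1,p}(\Omega)$ follows from the boundedness of the right-hand side of $(P_\epsilon)$.

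The boundedness of $\Sigma_\epsilon$ in the $\lambda$-direction is obtained by a uniform-in-$n$ refinement of Lemma \ref{L5}(a). Splitting the quotient $\left[(t+\epsilon)^{-\delta} + f_n(t)\right]t^{1-p}$ into the regimes $t \leq n$ (where $f_n(t) = t^q$) and $t > n$ (where the quotient is bounded below by $n^{q-p+1}$), one shows that for $n$ large enough its infimum equals $C_\epsilon := \displaystyle\inf_{t>0}\left[(t+\epsilon)^{-\delta} + t^q\right]t^{1-p} > 0$; repeating the monotone iteration argument of Lemma \ref{L5}(a), no solution of $(P_\epsilon^n)$ exists for $\lambda > \Lambda(\epsilon) := \lambda_1/C_\epsilon$, and this bound is inherited by $\Sigma_\epsilon$. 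Since $\Sigma_\epsilon$ is unbounded yet bounded in $\lambda$, it must extend to $\|u\|_\infty = \infty$. To pinpoint the bifurcation parameter at $\lambda = 0$, I would apply Lemma \ref{L11}: given $\eta, M > 0$, choose $n$ large enough that both $\lambda_1/n^{q-p+1} < \eta/2$ and $\Sigma_\epsilon^n \subset V_{\eta/2}(\Sigma_\epsilon)$ hold; Lemma \ref{L5}(b) delivers $(\lambda, u) \in \Sigma_\epsilon^n$ with $|\lambda - \lambda_1/n^{q-p+1}| < \eta/4$ and $\|u\|_\infty > M + \eta$, whose $\eta/2$-neighbour $(\lambda', u') \in \Sigma_\epsilon$ satisfies $\lambda' < \eta$ and $\|u'\|_\infty > M$, establishing bifurcation from infinity at $\lambda = 0$.

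The main obstacle I anticipate is producing the uniform-in-$n$ a priori bound $\Lambda(\epsilon)$: verifying that the infimum defining $C_\epsilon$ is attained in the regime $t \leq n$ for all $n$ beyond a threshold requires a careful elementary analysis of $\left[(t+\epsilon)^{-\delta}+f_n(t)\right]/t^{p-1}$ in both truncation ranges, and this bound is what simultaneously makes Lemma \ref{L9} applicable (through the relative compactness estimate) and guarantees that the resulting $\Sigma_\epsilon$ is $\lambda$-bounded. By contrast, the passage to the limit in the PDE itself is relatively routine because $\epsilon > 0$ tames the singular term, and the connection with Lemma \ref{L11} for the bifurcation claim is quite direct once boundedness in $\lambda$ is established.
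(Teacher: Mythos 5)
Your proposal is correct and follows the same overall architecture as the paper: both construct $\Sigma_\epsilon$ as $\displaystyle\lim_{n\to\infty}\sup\Sigma_\epsilon^n$ via Lemma \ref{L9}, verify hypothesis (ii) through the uniform $L^\infty$ bound on the right-hand side (using $\epsilon>0$ to tame the singularity), Lieberman regularity and Arzel\`a--Ascoli, and then pass to the limit in the equation. Two of your sub-arguments differ genuinely from the paper's. For $\lambda$-boundedness, you prove a uniform-in-$n$ nonexistence threshold $\Lambda(\epsilon)=\lambda_1/C_\epsilon$ for the approximating problems $(P_\epsilon^n)$ and let $\Sigma_\epsilon$ inherit it, whereas the paper runs the same eigenvalue-comparison/monotone-iteration contradiction directly on the limit problem $(P_\epsilon)$ using the explicit minimum of $g_\epsilon$; your route requires the elementary check that $\inf_t\left[(t+\epsilon)^{-\delta}+f_n(t)\right]t^{1-p}=C_\epsilon$ for $n$ large (which holds since the quotient exceeds $n^{q-p+1}$ for $t>n$), and the two are otherwise equivalent in difficulty. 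For bifurcation from infinity at $\lambda=0$, your argument is more direct and arguably cleaner: you transport the known bifurcation points $\lambda_1/n^{q-p+1}\to 0$ of the $\Sigma_\epsilon^n$ (Lemma \ref{L5}(b)) into $\Sigma_\epsilon$ via the $V_\eta$-approximation of Lemma \ref{L11}, producing explicitly a sequence $(\lambda',u')\in\Sigma_\epsilon$ with $\lambda'\to 0$ and $\|u'\|_\infty\to\infty$. The paper instead argues by contradiction through the minimal-solution branch, invoking local uniqueness of small-norm solutions for small $\lambda$ to rule out the alternative that the unbounded part of $\Sigma_\epsilon$ collapses onto $(0,0)$; that version yields as a by-product the structural information about non-minimal solutions that is reused later, but your shorter argument fully suffices for the statement as given.
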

\begin{proof}
We will apply Lemma \ref{L9} to get such continuum. More precisely, 	consider $X = \mathbb{R}\times C_0(\overline{\Omega})$,  $0 < \epsilon < \varepsilon_1$ and  $n > N_1$. For any $n \in \mathbb{N}$, the continuum $\Sigma_{\epsilon}^{n}$ contains $(0,0)$, whence $(0,0) \in  \displaystyle\lim_{n \rightarrow \infty}\inf \Sigma_{\epsilon}^{n} $. Moreover, by taking $$\{(\lambda_j, u_j)\}_{j=1}^{\infty} \subset \left(\displaystyle\bigcup_{n=N_1+1}^{+\infty}\Sigma_{\epsilon}^{n}\right) \cap \overline{B_R(0,0)}, $$ it follows from the mapping properties of the inverse p-Laplacian (see \cite{Lieberman}) that $\|u_j\|_{C^{1,\beta}(\overline{\Omega})}$ is uniformly bounded, for some $\beta \in (0,1)$. Thus, Arzel\`a-Ascoli theorem assures us that  
$$
\lambda_j \rightarrow \lambda \geq 0 ~~\mbox{and} ~~
u_j \rightarrow u ~\mbox{in} ~C^{1}(\overline{\Omega}),
$$
up to a subsequence.
Hence, we are able to apply Lemma \ref{L9} to conclude that $\Sigma_{\epsilon} := \displaystyle\lim_{n \rightarrow \infty}\sup \Sigma^{n}_\epsilon$ is unbounded, closed and connected set in $\mathbb{R}\times C_0(\overline{\Omega})$. 

We claim that $\Sigma_{\epsilon}$ is formed by solutions of $(P_{\epsilon})$. In fact, if $(\lambda, u) \in \Sigma_{\epsilon}$, then  
$$(\lambda_{n_j}, u_{n_j})   \rightarrow (\lambda, u) ~~\mbox{in} ~ \mathbb{R}\times C_0(\overline{\Omega}), $$  for  some subsequence $\{(\lambda_{n_j}, u_{n_j})\},$ where $(\lambda_{n_j}, u_{n_j})\in \Sigma^{n_j}_{\epsilon}$.
In particular, $\{\|u_{n_j}\|_\infty\}_{j=1}^{\infty}$ is uniformly bounded, thus once again invoking Lieberman regularity result \cite{Lieberman} and applying Arzel\`a-Ascoli theorem we obtain that $(\lambda_{n_j}, u_{n_j})   \rightarrow (\lambda, u) ~~\mbox{in} ~ \mathbb{R}\times C^1_0(\overline{\Omega})$ and $(\lambda, u)$ solves $(P_\epsilon)$.

To prove that $\Sigma_\epsilon$ is bounded in the $\lambda$-direction, notice that the function
$$g_{\epsilon}(t) = \frac{(t+\epsilon)^{-\delta} + t^{q}}{t^{p-1}}, ~~~\mbox{for} ~~ t > 0 $$ admits a global minimum at $t_{\mathrm{min}} = h^{-1}\left(0\right),$ where $h:\mathbb{R}^+ \rightarrow \mathbb{R}$ is an invertible function given by $$h(t) = -\delta(t+\epsilon)^{-\delta-1}t^{1-p} + (1-p)(t+\epsilon)^{-\delta}t^{-p} + (q-p+1)t^{q-p}.$$ Moreover, 
\begin{equation}\label{tmin}
\left(\frac{p-1+\delta}{q-p+1}\right)^{\frac{1}{q+\delta}} - \epsilon < t_{\mathrm{min}} < \left(\frac{p-1+\delta}{q-p+1}\right)^{\frac{1}{q+\delta}}.
\end{equation}
So, denoting by $\zeta = \left(\frac{p-1+\delta}{q-p+1}\right)^{\frac{1}{q+\delta}}$, we conclude from (\ref{tmin}) that 
$$g_\epsilon(t) \geq g_\epsilon(t_{\mathrm{min}}) \geq (\zeta+1)^{-\delta}\zeta^{1-p} $$
for all $t > 0$ and $0 < \epsilon < \varepsilon_1 < 1$.

Suppose there exists $({\lambda_*}, {u_*}) \in \Sigma_{\epsilon}$ with ${\lambda_*} >  \lambda_1(\zeta+1)^{\delta}\zeta^{p-1}$. Then, ${\lambda_*} g_{\epsilon}(t) \geq \lambda_1 + \kappa$ for every $\kappa > 0$  small enough, that is, $${\lambda_*}\left((t+\epsilon)^{-\delta} + t^{q}\right) \geq (\lambda_1 + \kappa){t^{p-1}}, ~~~\mbox{for all} ~~ t > 0.$$ In particular, ${u_*}$ is a supersolution of 
\begin{equation}\label{aut}
\left\{\begin{array}{l}
-\Delta_p u = (\lambda_1 + \kappa){u^{p-1}} ~~\mbox{in}~~\Omega, \\
u > 0, ~~ u|_{\partial \Omega} = 0,
\end{array}\right.
\end{equation}
for all $\kappa > 0$ small enough. 
Moreover, for $s > 0$ small  $s\phi_1$ is a subsolution of 
 (\ref{aut}) and satisfies ${u_*} \geq s\phi_1$ in $\Omega$. Hence, by monotone interaction we obtain a solution of (\ref{aut}) for any $\kappa > 0$ small, contradicting the fact that $\lambda_1$ is an isolated point in the spectrum of $(-\Delta_p, W_0^{1,p}(\Omega))$ (see \cite{ANANE}). Therefore,   $\mbox{Proj}_{\mathbb{R}^+} \Sigma_{\epsilon} \subset [0,  \lambda_1(\zeta+1)^{\delta}\zeta^{p-1}],$ for any $0 < \epsilon < \varepsilon_1$.

 Finally, let us prove that $\Sigma_{\epsilon}$ joins $(0,0)$ to $(0, +\infty)$. In this direction, we first observe that there exists a sequence $\{(\lambda_k, u_k)\}_{k=1}^{\infty} \subset \Sigma_\epsilon$ such that $\lambda_k \rightarrow 0^+$ and $u_k \neq \underline{u}_{\lambda_k}$, where  $\underline{u}_{\lambda_k}$ denotes the minimal solution of $(P_{\epsilon})$ for $\lambda = \lambda_k$ . Indeed, otherwise we could find some $\lambda_* > 0$ small enough such that $\Sigma_\epsilon \cap [0,\lambda_*] \times C_0(\overline{\Omega}) $ contains only elements in the branch of minimal solution of $(P_\epsilon)$ (see Proposition \ref{P2}-i) below), which is not possible by invoking Lemma \ref{L11}. Therefore, consider a sequence $ \{(\lambda_k, u_k)\}_{k=1}^{\infty} \subset \Sigma_\epsilon$ satisfying $\lambda_k \rightarrow 0^+$ and $u_k \neq \underline{u}_{\lambda_k}$. In this case, we must have $\|u_k\|_{\infty} \rightarrow \infty$, up to a subsequence. On the contrary, $\|u_k\|_{\infty}$ would be uniformly bounded, which combined with the $\lambda_k \rightarrow 0^+$ and Arzel�-Ascoli theorem would lead us to $(\lambda_k, u_k) \rightarrow (0,0)$ in $\mathbb{R}\times C_0(\overline{\Omega})$, but  this is not possible by uniqueness of solution for small  $\lambda$ and small norm (note that $t \mapsto [(t+\epsilon)^{-\delta} + t^q]/t^{p-1}$ is decreasing for $0 < t < \eta$, $\eta$ small). Hence the continuum $\Sigma_{\epsilon}$ joins $(0,0)$ to $(0, +\infty)$.

\end{proof}
\fim

\begin{proposition}\label{P2}
	For each $0< \epsilon < \varepsilon_1$, let ${\overline{\mbox{Proj}_{\mathbb{R}^+} \Sigma_{\epsilon}} = [0, \Lambda_\epsilon]}$ be the closure of the projection of ${\Sigma}_{\epsilon}$ onto the $\lambda-$axis. Then:
	\begin{itemize}
		\item[i)] $\Sigma_\epsilon$ contains the branch of minimal solutions of $(P_\epsilon)$,
		\item[ii)] for $\lambda > \Lambda_\epsilon$ there is no solution of $(P_{\epsilon})$,
		\item[iii)] for $0 < \lambda < \Lambda_\epsilon$ there are at least two solutions of $(P_{\epsilon})$ on $\Sigma_\epsilon$,
		\item[iv)] the map $\epsilon \mapsto \Lambda_{\epsilon}$ is non decreasing.
		\end{itemize}
		Besides this, $0<\Lambda_\epsilon \leq \lambda_1(\zeta+1)^{\delta}\zeta^{p-1}$ for all $\epsilon>0$ sufficiently small, where $\zeta = \left(\frac{p-1+\delta}{q-p+1}\right)^{\frac{1}{q+\delta}}$.	
\end{proposition}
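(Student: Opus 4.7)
My plan is to prove (i)--(iv) in sequence, deriving the bounds on $\Lambda_\epsilon$ at the end. The upper bound $\Lambda_\epsilon \le \lambda_1(\zeta+1)^\delta\zeta^{p-1}$ is essentially already contained in the proof of Proposition~\ref{P1}: the spectral argument used there only requires that $(\lambda_*, u_*)$ solves $(P_\epsilon)$, not that it belongs to $\Sigma_\epsilon$, so it rules out any solution (and in particular any element of $\Sigma_\epsilon$) with $\lambda > \lambda_1(\zeta+1)^\delta\zeta^{p-1}$. The positivity $\Lambda_\epsilon > 0$ is immediate because, by Proposition~\ref{P1}, $\Sigma_\epsilon$ connects $(0,0)$ to $(0,+\infty)$ and hence projects onto a non-degenerate interval.

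For (i), I set $\bar\Lambda_\epsilon := \sup\{\lambda > 0 : (P_\epsilon) \text{ has a solution}\}$ and construct the minimal branch as follows: for any $\lambda \in (0, \bar\Lambda_\epsilon)$, choose $\lambda_* \in (\lambda, \bar\Lambda_\epsilon)$ and any solution $v$ of $(P_\epsilon)$ at $\lambda_*$; then $v$ is a supersolution of $(P_\epsilon)$ at $\lambda$ dominating the subsolution $\omega_{\lambda,\epsilon}$, and monotone iteration starting from $\omega_{\lambda,\epsilon}$ produces a minimal $\underline{u}_\lambda \le v$. The map $\lambda\mapsto\underline{u}_\lambda$ is monotone non-decreasing and continuous in $C_0(\overline{\Omega})$. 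To show $(\lambda,\underline{u}_\lambda)\in\Sigma_\epsilon$, I invoke the uniqueness statement of Lemma~\ref{L7} (also used at the end of the proof of Proposition~\ref{P1}): for $\lambda$ and $\|u\|_\infty$ sufficiently small, the solution of $(P_\epsilon)$ is unique, so near $(0,0)$ the minimal branch coincides with the portion of $\Sigma_\epsilon$ emerging from $(0,0)$. Since $\Sigma_\epsilon$ is closed in $\mathbb{R}\times C_0(\overline{\Omega})$ (as a $\limsup$ of continua, by Lemma~\ref{L9}) and connected, and the minimal branch is continuous and minimal, a continuation/minimality argument extends the coincidence to all of $(0,\bar\Lambda_\epsilon)$. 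Item (ii) is then immediate: $\bar\Lambda_\epsilon \le \Lambda_\epsilon$ because every $\lambda<\bar\Lambda_\epsilon$ lies in $\mathrm{Proj}_{\mathbb{R}^+}\Sigma_\epsilon$, while the reverse inequality is trivial; so $\Lambda_\epsilon=\bar\Lambda_\epsilon$ and $(P_\epsilon)$ admits no solution for $\lambda>\Lambda_\epsilon$.

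For (iii), the minimal solution $\underline{u}_\lambda\in\Sigma_\epsilon$ already provides one solution at each $\lambda\in(0,\Lambda_\epsilon)$. For a second, I exploit that $\Sigma_\epsilon$ is connected, bounded in the $\lambda$-direction by $\Lambda_\epsilon$, contains $(0,0)$, and, by Proposition~\ref{P1}, contains a sequence $(\lambda_k,u_k)$ with $\lambda_k\to 0^+$ and $\|u_k\|_\infty\to\infty$. A topological separation argument — splitting $\Sigma_\epsilon$ along the vertical line $\{\lambda=\lambda_0\}$ and noting that a single crossing at $(\lambda_0,\underline{u}_{\lambda_0})$ would disconnect $\Sigma_\epsilon$ into a ``small-norm'' piece containing $(0,0)$ and a ``large-norm'' piece carrying the diverging sequence — forces a second point of $\Sigma_\epsilon$ on the line $\{\lambda=\lambda_0\}$, necessarily distinct from $\underline{u}_{\lambda_0}$.

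For (iv), fix $\epsilon_1<\epsilon_2$ and $\lambda\in(0,\Lambda_{\epsilon_1}]$, so that $(P_{\epsilon_1})$ admits a solution $u$. Since $(u+\epsilon_2)^{-\delta}\le (u+\epsilon_1)^{-\delta}$, $u$ is a supersolution of $(P_{\epsilon_2})$ at the same $\lambda$; the comparison principle of \cite{Can,ZAMP} gives $\omega_{\lambda,\epsilon_2}\le \omega_{\lambda,\epsilon_1}\le u$, so the sub-supersolution method produces a solution of $(P_{\epsilon_2})$ in $[\omega_{\lambda,\epsilon_2},u]$. Combined with (ii), this gives $\lambda\le\Lambda_{\epsilon_2}$, whence $\Lambda_{\epsilon_1}\le\Lambda_{\epsilon_2}$. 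The main obstacle will be (i): rigorously coupling the minimality of $\underline{u}_\lambda$ with the connectedness of $\Sigma_\epsilon$ to force the entire minimal branch — not just its germ at $(0,0)$ — to lie in $\Sigma_\epsilon$; the topological argument for (iii) is closely related and may, if bare connectedness is not enough, require invoking a Whyburn-type result on continua.
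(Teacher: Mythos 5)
Your parts (iv) and the bounds on $\Lambda_\epsilon$ coincide with the paper's arguments, and your deduction of (ii) from (i) via $\Lambda_\epsilon=\bar\Lambda_\epsilon$ is a legitimate shortcut (the paper instead reruns its topological argument). The genuine problems are in (i) and (iii). For (i), the ``continuation/minimality argument'' you defer is exactly the missing content, and open--closed continuation cannot supply it: the set of $\lambda$ with $(\lambda,\underline{u}_\lambda)\in\Sigma_\epsilon$ is (at best) closed, but it is not open, since connectedness of $\Sigma_\epsilon$ near $(\lambda_0,\underline{u}_{\lambda_0})$ produces nearby points of $\Sigma_\epsilon$ with no reason to be \emph{minimal} solutions, and $\Sigma_\epsilon$ could a priori meet the minimal branch only from one side in $\lambda$. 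The paper closes this by first upgrading the connectedness of $\Sigma_\epsilon$ to the $C_0^1(\overline{\Omega})$ topology, then building the open bounded order-interval $A=(0,\lambda')\times\{u:\,0<u<\underline{u}_{\lambda'},\ 0>\partial u/\partial\upsilon>\partial\underline{u}_{\lambda'}/\partial\upsilon\}$ and invoking the strong comparison principle (Theorem 2.3 of \cite{Giaco}) to show $\Sigma_\epsilon\cap\partial A=\{(0,0)\}$, which is incompatible with $\Sigma_\epsilon$ being unbounded and connected. A Whyburn-type theorem or bare connectedness will not substitute for this order-theoretic barrier, so you need some version of that comparison argument.

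For (iii), the proposed separation along the vertical line $\{\lambda=\lambda_0\}$ does not yield a contradiction: both distinguished features of $\Sigma_\epsilon$ --- the point $(0,0)$ and the bifurcation from infinity --- occur at $\lambda=0$, hence on the \emph{same} side of that line. If $\Sigma_\epsilon$ met $\{\lambda=\lambda_0\}$ only at $(\lambda_0,\underline{u}_{\lambda_0})$, removing that point would split $\Sigma_\epsilon$ into its $\{\lambda<\lambda_0\}$ and $\{\lambda>\lambda_0\}$ parts, and the first part would contain both $(0,0)$ and the diverging sequence; nothing is contradicted, so no second solution is forced. The paper separates instead by the order cone: assuming every $(\lambda',u)\in\Sigma_\epsilon$ satisfies $u\leq\underline{u}_{\Lambda_\epsilon}$, it sets $B=[0,\lambda']\times V^c$ with $V$ the set of functions lying strictly below $\underline{u}_{\Lambda_\epsilon}$ in the $C^1$ sense, observes that $\Sigma_\epsilon$ meets $B$ (large-norm points at small $\lambda$) and $B^c$ (small solutions near the origin), and uses the strong comparison principle again to rule out $\Sigma_\epsilon\cap\partial B\neq\emptyset$. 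You should replace the vertical-line splitting by a separation of this order-theoretic type; the $(0,\check{\lambda})\times\{\|u\|_\infty>C_*\}$ device from the proof of Theorem \ref{T1}(iii) only works for small $\lambda$, where uniqueness of small-norm solutions is available.
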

\begin{proof} \textit{Part $i)$:}	Since $\Sigma_\epsilon \subset \mathbb{R}^+\times C_0(\overline{\Omega})_+$, it follows from the theory of regularity for elliptic equations (see \cite{Lieberman}, Theorem 1) that $\Sigma_\epsilon \subset \mathbb{R^+}\times C^1_0(\overline{\Omega})_+$. Let us denote by $(\Sigma_\epsilon, \mathbb{R}\times C)$  the set $\Sigma_\epsilon$ with the topology induced by $\mathbb{R}\times C_0(\overline{\Omega})$ and represent by $(\Sigma_\epsilon, \mathbb{R}\times C^1)$  the set $\Sigma_\epsilon$ with the topology induced by $\mathbb{R}\times C_0^1(\overline{\Omega})$. As we have proved, $(\Sigma_\epsilon, \mathbb{R}\times C)$ is connected.
	
\noindent	\textit{Claim:} $(\Sigma_\epsilon, \mathbb{R}\times C^1)$ is connected. Indeed, let $\mathbb{Z}$ be the set of integers with the topology induced by the usual topology on $\mathbb{R}$ and $h:  (\Sigma_\epsilon, \mathbb{R}\times C^1) \rightarrow \mathbb{Z}$ be a continuous function. Then $h:  (\Sigma_\epsilon, \mathbb{R}\times C) \rightarrow \mathbb{Z}$ is also continuous.  Since $(\Sigma_\epsilon, \mathbb{R}\times C)$ is connected, it follows that $h:  (\Sigma_\epsilon, \mathbb{R}\times C) \rightarrow \mathbb{Z}$  is constant, hence
	$h:  (\Sigma_\epsilon, \mathbb{R}\times C^1) \rightarrow \mathbb{Z}$ is constant as well, which proves that $(\Sigma_\epsilon, \mathbb{R}\times C^1)$ is connected.

	Now we are able to prove that $\Sigma_\epsilon$ contains the branch of minimal solutions of $(P_\epsilon)$, that is, if ${\lambda'} \in (0, \Lambda_\epsilon)$ and $\underline{u}_{{\lambda}'}$ is a minimal solution of $(P_{\epsilon})$ with $\lambda = {\lambda'}$, then $({\lambda'}, \underline{u}_{{\lambda'}}) \in \Sigma_\epsilon$. On the contrary, consider 
	$$ A = (0, {\lambda'}) \times \left\{u \in C_0^1(\overline{\Omega})_+ ~: ~ 0 < u < \underline{u}_{{\lambda'}} ~\mbox{in} ~\Omega,~ 0 > \frac{\partial u}{\partial\upsilon } > \frac{\partial \underline{u}_{{\lambda'}}}{\partial\upsilon} ~ \mbox{on} ~\partial \Omega\right\}$$
	an open and bounded set in $C_0^1(\overline{\Omega})_+$, where $\upsilon$ is the outward unit normal to $\partial \Omega$. Notice that $A \cap \Sigma_\epsilon \neq \emptyset$ and, by our  contradiction hypothesis, $\Sigma_\epsilon \cap \left(\{{\lambda'}\}\times [0, \underline{u}_{{\lambda'}}]\right)= \emptyset$.  Moreover, for $(\lambda,u) \in \Sigma_\epsilon \cap \overline{A}$ with $\lambda \in [0, {\lambda'})$ we have 
	\begin{equation}\label{pcf1}
		\begin{array}{l}
			-\Delta_p u - \lambda(u+\epsilon)^{-\delta} = \lambda u^q, \\
			-\Delta_p \underline{u}_{{\lambda'}}- \lambda(\underline{u}_{{\lambda'}}+\epsilon)^{-\delta} = {\lambda'} \underline{u}_{{\lambda'}}^q + ({\lambda'} - \lambda)(\underline{u}_{{\lambda'}} + \epsilon)^{-\delta},
	\end{array}\end{equation}
	where $\lambda u^q < {\lambda'} \underline{u}_{{\lambda'}}^q + ({\lambda'} - \lambda)(\underline{u}_{{\lambda'}} + \epsilon)^{-\delta}$ in $\Omega$ because  $0\leq u \leq \underline{u}_{{\lambda'}}$ in $\Omega$. Thus, by taking advantage of the proof of the Theorem 2.3 in \cite{Giaco}, we conclude from (\ref{pcf1}) that 
	$u < \underline{u}_{{\lambda'}}$ in $\Omega$. Therefore, $\Sigma_\epsilon \cap \partial A = \{(0,0)\}$, which contradicts the unboundedness and $C^1-$connectedness of $\Sigma_\epsilon$. Hence, $\Sigma_\epsilon$ contains the branch of minimal solutions of $(P_\epsilon)$.

	\textit{Part ii):}  We argue by contradiction. Suppose there exists a pair $(\lambda_*, u_*)$ of solution of the problem $(P_{\epsilon})$ with $\lambda_* > \Lambda_{\epsilon}$. 
	Without loss of generality, we can assume that $u_*$ is a minimal solution of $(P_{\epsilon})$ with $\lambda = \lambda_*$.
	
Consider  the open and bounded set in $C_0^1(\overline{\Omega})_+$ defined by
$$ A = (0, \lambda_*) \times \left\{u \in C_0^1(\overline{\Omega})_+ ~: ~ 0 < u < u_* ~\mbox{in} ~\Omega~\mbox{and}~ 0 > \frac{\partial u}{\partial\upsilon } > \frac{\partial u_*}{\partial\upsilon} ~ \mbox{on} ~\partial \Omega\right\}$$ 
and notice that $A \cap \Sigma_\epsilon \neq \emptyset$. Proceeding exactly as in \textit{Part-i)} one gets $\Sigma_\epsilon \cap \partial A = \{(0,0)\}$, again contradicting the unboundedness and connectedness of $\Sigma_\epsilon$.

\textit{Part $iii)$:} Let $\lambda' \in (0, \Lambda_\epsilon)$. In the following discussion, $\underline{u}_{\Lambda_\epsilon}$ denotes the minimal solution of $(P_\epsilon)$ with $\lambda = \Lambda_{\epsilon}$. If $(P_\epsilon)$ does not admit a solution for $\lambda = \Lambda_{\epsilon}$, then just replace  $\underline{u}_{\Lambda_\epsilon}$  with $\underline{u}_{\lambda''}$, where $\lambda'' \in (\lambda', \Lambda_{\epsilon})$.  Now, we argue by contradiction. Suppose that  $u \leq \underline{u}_{\Lambda_\epsilon}$, whenever $({\lambda'},u) \in \Sigma_\epsilon$. 
 In this case, it follows from the strong comparison principle \cite{Giaco} that $u < \underline{u}_{\Lambda_\epsilon}$ in $\Omega$ and ${\partial u}/{\partial\upsilon } > {\partial \underline{u}_{\Lambda_\epsilon}}/{\partial\upsilon}$ on $\partial \Omega$. Consider the open and bounded set 
 $$V = \left\{u \in C_0^1(\overline{\Omega})_+ ~: ~ u(x) < \underline{u}_{\Lambda_\epsilon}(x) ~\mbox{in} ~ \Omega ~~\mbox{and} ~~  \frac{\partial u}{\partial\upsilon }(x) > \frac{\partial \underline{u}_{\Lambda_\epsilon}}{\partial\upsilon}(x) ~ \mbox{on} ~\partial \Omega \right\}$$ and 
 $B = [0, {\lambda'}] \times V^c$. Cleary $\Sigma_\epsilon \cap B^c  \neq \emptyset$ and $\Sigma_\epsilon \cap B \neq \emptyset$, because $\Sigma_\epsilon$ bifurcates from infinity at $\lambda = 0$ and emanates from $(0,0)$.  On the other hand,  we have
 $$\partial B = \left(\{0, {\lambda'}\}\times \overline{V}^c\right) \cup \left([0, {\lambda'}] \times \partial V^c\right), $$ where 
 \begin{eqnarray*}
 	\partial V^c &=& \overline{V} \cap  \overline{V}^c \subseteq  \left\{u \in C_0^1(\overline{\Omega})_+ ~: ~ u(x) \leq \underline{u}_{\Lambda_\epsilon}(x) ~\mbox{in} ~~ \Omega ~~\mbox{and} ~~  \frac{\partial u}{\partial\upsilon }(x) \geq \frac{\partial \underline{u}_{\Lambda_\epsilon}}{\partial\upsilon}(x) ~ \mbox{on} ~\partial \Omega \right\} \\
 	& & \bigcap \left\{u \in C_0^1(\overline{\Omega})_+ ~: ~ u(x) \geq \underline{u}_{\Lambda_\epsilon}(x) ~\mbox{for some} ~x \in \Omega ~~\mbox{or} ~~  \frac{\partial u}{\partial\upsilon }(x) \leq \frac{\partial \underline{u}_{\Lambda_\epsilon}}{\partial\upsilon}(x) ~ \mbox{for some} ~ x \in \partial \Omega \right\} \\
 & \subseteq & \left\{u \in C_0^1(\overline{\Omega})_+ ~: ~ u(x) \leq \underline{u}_{\Lambda_\epsilon}(x) ~\mbox{in} ~ \Omega ~~\mbox{and} ~~ u(x) = \underline{u}_{\Lambda_\epsilon}(x) ~\mbox{for some} ~x \in \Omega \right. \\
  & & \left. \mbox{or} ~ \frac{\partial u}{\partial\upsilon }(x) = \frac{\partial \underline{u}_{\Lambda_\epsilon}}{\partial\upsilon}(x) ~ \mbox{for some} ~ x \in \partial \Omega \right\},
 \end{eqnarray*}
 which implies again  by Theorem 2.3 in \cite{Giaco} that $\Sigma_\epsilon \cap [0, {\lambda'}] \times \partial V^c = \emptyset$. Since  $\Sigma_\epsilon \cap \left(\{0, {\lambda'}\}\times \overline{V}^c \right)= \emptyset$, we have $\Sigma_\epsilon \cap \partial  B= \emptyset, $ contradicting the $C^1$-connectedness of $\Sigma_\epsilon$. From this, the proof of item$-iii)$ is established.

\textit{Part $iv
	)$:} Let $\kappa > 0$ small, $0 < \epsilon_1 < \epsilon_2$ and denote by $\underline{u}_{i}$ the minimal solution of the problem
$(P_{\epsilon_i})$ with $\lambda = \Lambda_{\epsilon_i} - \kappa$, $i=1,2$. 
In this case, $\underline{u}_{1}$ is  a supersolution of 
\begin{equation}\label{ccc1}
\left\{\begin{array}{l}
-\Delta_p u = (\Lambda_{\epsilon_{1}}-\kappa)\left((u+\epsilon_2)^{-\delta} +	u^q\right) ~~\mbox{in}~\Omega  \\
u = 0 ~~\mbox{on} ~\partial{\Omega}.
\end{array}\right.
\end{equation}
and $v = 0$ is subsolution. So, (\ref{ccc1}) admits a positive solution in $[0, \underline{u}_{1}] $ and by arbitrariness of $\kappa$ we conclude that $ \Lambda_{\epsilon_{1}} \leq \Lambda_{\epsilon_2}$.
\end{proof}
\fim

Below, we present some of the possible bifurcation diagrams of $\Sigma_\epsilon$.

\begin{figure}[h!]
\centering
\begin{overpic}[width=0.8\textwidth]{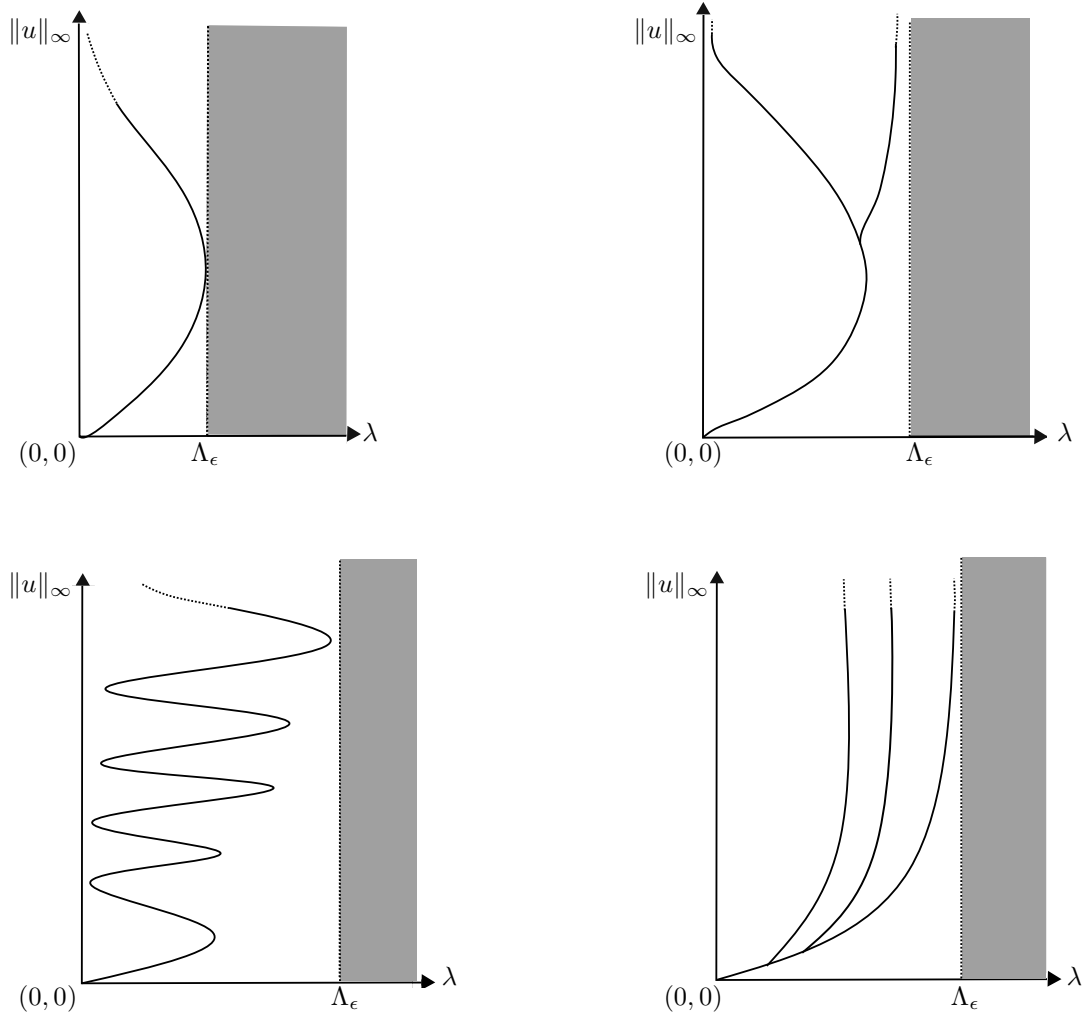}
	\put(12,53){$\Lambda_\epsilon$}
	\put(82,53){ $\Lambda_\epsilon$}
	\put(29,55){{$\lambda$}}
	\put(-7,40){{ $\|u\|_\infty$}}
	\put(-7,95){{ $\|u\|_\infty$}}
	\put(98,1){{ $\lambda$}}
	\put(57,40){{$\|u\|_\infty$}}
	\put(55,95){{ $\|u\|_\infty$}}
	\put(97,55){{ $\lambda$}}
	\put(86.5,-1){{ $\Lambda_\epsilon$}}
	\put(25,-1){{ $\Lambda_\epsilon$}}
	\put(36,1){{ $\lambda$}}
	\put(-6,-1){{ $(0,0)$}}
	\put(58,-1){{ $(0,0)$}}
	\put(-6,53){{ $(0,0)$}}
	\put(58,53){{ $(0,0)$}}
\end{overpic}
\caption{Possible bifurcation diagram for  $\Sigma_{\epsilon}$ and non-existence regions}
\end{figure}
\newpage

\begin{lemma}\label{add} Let $g \in C(\overline{\Omega}\times\mathbb{R})$ be a non-negative function and $\{\epsilon_k\}_{k=1}^{\infty} \subset (0,1)$ a  sequence satisfying $\epsilon_k \rightarrow 0^+$. If  $u_k \in W_0^{1,p}(\Omega) \cap C(\overline{\Omega})$ is a solution of 
	\begin{equation}\label{geral1}
	\left\{\begin{array}{l}
	-\Delta_p u = \lambda_k\left[(u+\epsilon_k)^{-\delta} + g(x,u)\right] ~~\mbox{in} ~~\Omega, \\
	u > 0 ~~\mbox{in} ~\Omega, ~~u = 0 ~~\mbox{on} ~\partial\Omega
	\end{array}\right.
	\end{equation}
such that	$0 <\displaystyle\inf_k \lambda_k \leq  \lambda_k \leq \lambda$ and $0 < u_k(x) \leq h(x)$, for some $\lambda > 0$ and $h \in C_0(\overline{\Omega})_+$, then there exists $(\lambda_*,u_*) \in \mathbb{R}^+ \times \left(  W_\mathrm{loc}^{1,p}(\Omega) \cap C_0(\overline{\Omega}) \right)$ such that 
	$$\lambda_k \rightarrow \lambda_* ~~\mbox{and} ~~~u_k \rightarrow u_* ~~\mbox{in} ~ C(\overline{\Omega}) ~~\mbox{and} ~~W_\mathrm{loc}^{1,p}(\Omega), $$ up to a subsequence.
	Moreover, $u_*$	solves 
	\begin{equation}\label{geral2}
	\left\{\begin{array}{l}
	-\Delta_p u = \lambda_*\left[u^{-\delta} + g(x,u)\right] ~~\mbox{in} ~~\Omega, \\
	u > 0 ~~\mbox{in} ~\Omega, ~~u = 0 ~~\mbox{on} ~\partial\Omega.
	\end{array}\right.
	\end{equation}
	
\end{lemma}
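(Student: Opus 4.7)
The plan is to first extract a convergent subsequence of $\{\lambda_k\}$, then establish a uniform positive lower bound on $u_k$ on compact subsets of $\Omega$ by comparison with the $p$-torsion function $e_p$, next apply Lieberman's interior $C^{1,\alpha}$ regularity to obtain compactness, and finally pass to the limit in the weak formulation tested against compactly supported functions.

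Since $\{\lambda_k\} \subset [\inf_k \lambda_k, \lambda]$ is bounded, I would first extract a subsequence so that $\lambda_k \to \lambda_* \in (0, +\infty)$. The key step is the uniform interior lower bound: because $u_k \leq h$ and $\epsilon_k < 1$,
$$-\Delta_p u_k \geq \lambda_k (u_k+\epsilon_k)^{-\delta} \geq c_1 := \left(\inf_k \lambda_k\right)\bigl(\|h\|_\infty + 1\bigr)^{-\delta} > 0 \mbox{ in } \Omega,$$
so weak comparison with the torsion function $e_p$ from \eqref{torsion} gives $u_k \geq c_1^{1/(p-1)} e_p$ in $\Omega$; in particular $u_k \geq c_K > 0$ uniformly on any compact $K \subset\subset \Omega$.

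On such a $K$, the right-hand side $\lambda_k[(u_k+\epsilon_k)^{-\delta} + g(x,u_k)]$ is uniformly bounded in $L^\infty(K)$: the lower bound $u_k \geq c_K$ tames the singular term, while $0 \leq u_k \leq \|h\|_\infty$ together with continuity of $g$ on $\overline{\Omega} \times [0, \|h\|_\infty]$ controls the regular term. Lieberman's interior regularity (\cite{Lieberman}) then yields uniform $C^{1,\alpha}_{\mathrm{loc}}(\Omega)$ bounds, and a diagonal Arzel\`a--Ascoli argument along an exhaustion of $\Omega$ produces a further subsequence with $u_k \to u_*$ in $C^1_{\mathrm{loc}}(\Omega)$, hence in $W^{1,p}_{\mathrm{loc}}(\Omega)$. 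To upgrade to $C(\overline{\Omega})$-convergence I would use the trapping $0 < u_k \leq h \in C_0(\overline{\Omega})$: given $\eta > 0$, pick a neighborhood $U$ of $\partial \Omega$ on which $h < \eta/3$, forcing both $u_k$ and $u_*$ to be at most $\eta/3$ on $U \cap \overline{\Omega}$; the $C^1_{\mathrm{loc}}$-convergence handles the complementary compact set. This delivers $u_* \in C_0(\overline{\Omega})$ with $u_* \geq c_1^{1/(p-1)} e_p > 0$ in $\Omega$.

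To verify that $u_*$ solves \eqref{geral2}, I would test the equation for $u_k$ against any $\varphi \in C_c^\infty(\Omega)$ with support in $K \subset\subset \Omega$: on $K$, the lower bound yields $(u_k+\epsilon_k)^{-\delta} \to u_*^{-\delta}$ uniformly, continuity of $g$ yields $g(x, u_k) \to g(x, u_*)$ uniformly, and $\nabla u_k \to \nabla u_*$ uniformly, so the passage to the limit is immediate. I expect the main technical hurdle to be precisely the uniform interior lower bound, since without it the singular term $(u_k+\epsilon_k)^{-\delta}$ would destroy both the $L^\infty_{\mathrm{loc}}$ bound on the right-hand side and any dominated-convergence argument; comparison with $c_1^{1/(p-1)} e_p$ is what neutralizes this difficulty and unlocks every subsequent step.
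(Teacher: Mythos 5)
Your proposal is correct, but it follows a genuinely different route from the paper's in its two key technical steps. For the interior lower bound, you compare with a multiple of the torsion function $e_p$, using $u_k+\epsilon_k\leq \|h\|_\infty+1$ to bound the singular term from below by a constant; the paper instead compares $u_k$ with $\omega_{\lambda_*-\tau,1}$, the solution of (\ref{sp}) with $\epsilon=1$. Both are valid; yours is more elementary. For compactness, the paper derives uniform $W^{1,p}(\Omega_i)$ energy bounds by testing against $(u_k-\gamma_1)^+$ on an exhaustion, extracts weak limits by a diagonal argument, and then invokes the Boccardo--Murat theorem to get a.e.\ convergence of gradients before passing to the limit in the principal part; you instead use interior $C^{1,\alpha}$ estimates (the right-hand side being uniformly bounded in $L^\infty$ on each compact thanks to the lower bound) plus a diagonal Arzel\`a--Ascoli argument to get $C^1_{\mathrm{loc}}$ convergence. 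Your route actually delivers the stated strong $W^{1,p}_{\mathrm{loc}}$ convergence more directly, and it makes the limit passage in the weak formulation trivial by uniform convergence on the support of the test function, whereas the energy-estimate route is more robust (it would survive with much less regularity of the data). Finally, for uniform convergence up to the boundary the paper appeals to a global H\"older regularity theorem from \cite{JKS} together with Arzel\`a--Ascoli, while you use the squeeze $0<u_k\leq h\in C_0(\overline{\Omega})$ near $\partial\Omega$ combined with interior uniform convergence; this is a clean, self-contained alternative. The only cosmetic point is that the interior gradient estimates you need are the DiBenedetto/Tolksdorf-type interior results rather than the boundary regularity statement of \cite{Lieberman}, but this does not affect the validity of the argument.
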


\begin{proof} By our hypothesis on the sequence $\{\lambda_k\}_{k=1}^{\infty}$, there exists $\lambda_* > 0$ such that $\lambda_k \rightarrow \lambda_*,$  up to a subsequence. Hence, for $\tau>0$ small we have $0 < \lambda_* - \tau < \lambda_k < \lambda_* + \tau$, for every $k$ enough large. From this inequality and classical weak comparison principles one obtains $\omega_{\lambda_*-\tau,1} \leq u_k$ in $\Omega$, where $\omega_{\lambda_*-\tau,1} \in C_0^1(\overline{\Omega})$ is the only solution of (\ref{sp}) with $\lambda = \lambda_*-\tau$ and $\epsilon = 1$.
	
	Consider a sequence $(\Omega_i)$ of open sets in $\Omega$ such that $\Omega_i \subset  \subset \Omega_{i+1}$, $\bigcup_i \Omega_i = \Omega$ and define $\gamma_i = \displaystyle\min_{\overline{\Omega}_i}\omega_{\lambda_*-\tau,1},$ for each $i \in \mathbb{N}$. Using that $g(x,u_k) \leq \displaystyle\max_{\overline{\Omega}\times[0,\|h\|_{\infty}]} g(x,t) $ in $\Omega$ and testing the problem (\ref{geral1}) against $\varphi = (u_k - \gamma_1)^+$, we obtain
	\begin{eqnarray*}
		\displaystyle\int_{[u_k \geq \gamma_1]}|\nabla u_k|^pdx  =  \lambda_k\displaystyle\int_{[u_k \geq \gamma_1]}\left[(u_k+\epsilon_k)^{-\delta} + g(x,u_k)\right](u_k - \gamma_1)^+dx \leq C_1,
	\end{eqnarray*}
	where $C_1 > 0 $ is a real constant independent of $k$. Hence, the sequence $\{u_k\}_{k=1}^{\infty}$ is bounded in $W^{1,p}(\Omega_1)$ and there exists $u_{\Omega_1} \in W^{1,p}(\Omega_1)$ and a subsequence $\{u_{k_j^1}\}$ of $\{u_k\}$ such that
	$$
	\left\{
	\begin{array}{l}
	u_{k_j^1} \rightharpoonup u_{\Omega_1}  \ \mbox{weakly in}  \ W^{1,p}(\Omega_1) \  \mbox{and strongly in} \ L^s(\Omega_1)  \ \mbox{for} \ 1 \leq s < p^* \\
	u_{k_j^1} \rightarrow u_{\Omega_1} \ \
	a.e.  \ \mbox{in } \ \Omega_1.
	\end{array}
	\right.
	$$
	
	Proceeding as above  through a diagonal argument we can obtain subsequences $\{u_{k_j^i}\}$ of $\{u_k\}$, with $\{u_{k_j^{i + 1}}\} \subset \{u_{k_j^{i}}\}$, and functions $ u_{\Omega_i} \in W^{1,p}(\Omega_i)$ such that
	$$
	\left\{
	\begin{array}{l}
	u_{k_j^i} \rightharpoonup u_{\Omega_i}\ \mbox{weakly in }  \ W^{1,p}(\Omega_i) \  \mbox{and strongly in} \ L^s(\Omega_i) \ \mbox{for} \ 1 \leq s < p^*  \\
	u_{k_j^i} \rightarrow u_{\Omega_i} \ \
	a.e. \ \mbox{in}   \ \Omega_i.
	\end{array}
	\right.
	$$
	
	By construction,  we have $ u_{{\Omega_{i+1}}{\big|_{\Omega_i}}} = u_{\Omega_i}. $ Hence,  $$u_* := \left\{
	\begin{array}{lll} u_{\Omega_1} & \mbox{in} &  \Omega_1, \\ u_{\Omega_{i+1}} & \mbox{in}&  \Omega_{i+1}\backslash \Omega_i
	\end{array}
	\right.$$ belongs to $  W_{\mathrm{loc}}^{1,p}(\Omega)$ and satisfies $\omega_{\lambda_*-\tau,1} \leq u_* \leq h$ in $\Omega$.
	
	We claim that $u_*$ is a solution of (\ref{geral2}). Indeed, by taking $\varphi \in C_c^{\infty}(\Omega)$ and using Theorem 2.1 of Boccardo and Murat \cite{Murat}, we obtain	\begin{equation}\label{11}
	\displaystyle\int_{\Omega}|\nabla u_k|^{p-2}\nabla u_k\nabla \varphi dx \rightarrow \displaystyle\int_{\Omega}|\nabla u|^{p-2}\nabla u\nabla \varphi dx,
	\end{equation}
	up to a subsequence. On the other hand, it follows from the convergence $u_k \rightarrow u_*$ a.e in $\Omega$,  continuity of $g$, uniform boundedness of $\{u_k\}_{k=1}^{\infty}$ and Lebesgue dominated convergence theorem that
	\begin{equation}\label{12}
	\lambda_k\displaystyle\int_{\Omega} \left[(u_k+\epsilon_k)^{-\delta} + g(x,u_k)\right]\varphi dx \rightarrow \lambda_*\displaystyle\int_{\Omega}\left[u_*^{-\delta} + g(x,u_*)\right]\varphi dx.
	\end{equation}
	Therefore, combining (\ref{11}) and (\ref{12}) one has 
	$$\displaystyle\int_{\Omega}|\nabla u_*|^{p-2}\nabla u_* \varphi dx = \lambda_*\displaystyle\int_{\Omega}\left[u_*^{-\delta} + g(x,u_*)\right]\varphi dx, $$
	for all $\varphi \in C_c^{\infty}(\Omega)$, which proves that $u_*$ solves (\ref{eq2}).
	
	To conclude that  $(\lambda_k, u_k) \rightarrow (\lambda_*, u_*)$ in $\mathbb{R}\times C(\overline{\Omega})$ as well,  we just need to combine $L^{\infty}$-uniform bound of $\{u_k\}_{k=1}^{\infty}$ and Arzel�-Ascoli theorem with Theorem 1.8 of \cite{JKS}.
\end{proof}
\fim
\vspace{0.4cm}

\begin{lemma}\label{L1.9}
Let  $B_R(0,0)\subset \mathbb{R} \times C_0(\overline{\Omega})$ be the ball centered at $(0,0)$ with radius $R$, $\epsilon \in (0,1)$ and  $(\lambda_{\epsilon}, u_{\epsilon}) \in \left((0, \infty) \times ( W_0^{1,p}(\Omega)) \cap C_0(\overline{\Omega})\right) \cap \overline{B}_R(0,0)$ be a pair of solution of
	\begin{equation}\label{geral}
	\left\{\begin{array}{l}
	-\Delta_p u = \lambda\left[(u+\epsilon)^{-\delta} + u^q\right] ~~\mbox{in} ~~\Omega, \\
	u > 0 ~\mbox{in} ~\Omega, ~~u = 0 ~~\mbox{on} ~\partial\Omega,
	\end{array}\right.
	\end{equation}		
	satisfying $\|(\lambda_\epsilon, u_\epsilon)\|_\infty > \varrho$, for some $\varrho \in (0,R)$. Then, there exist positive constants $\mathcal{K}_1 = \mathcal{K}_1(R, \varrho)$ and $\mathcal{K}_2 = \mathcal{K}_2(r, R)$  such that
	
	\begin{equation}\label{7}
	\lambda_{\epsilon}^{\frac{1}{p-1}}\mathcal{K}_1(R,\varrho) \phi_1 \leq u_{\epsilon} \leq r + \lambda_{\epsilon}^{\frac{1}{p-1}}\mathcal{K}_2(r, R)^{\frac{1}{p-1}}e_p ~~ \mbox{in} ~\Omega,
	\end{equation}
	for each $r \in (0, R]$ fixed, where $e_p$ is defined in (\ref{torsion}).
\end{lemma}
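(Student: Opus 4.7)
The plan is to derive both inequalities in (\ref{7}) by standard weak comparison arguments for $-\Delta_p$, leveraging only the crude a priori bounds $0<\lambda_\epsilon\le R$, $\|u_\epsilon\|_\infty\le R$ and $\epsilon\in(0,1)$ that come from $(\lambda_\epsilon,u_\epsilon)\in\overline{B}_R(0,0)$.

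For the lower bound I would drop the nonnegative term $u_\epsilon^q$ and use $u_\epsilon+\epsilon\le R+1$ to obtain
\[
-\Delta_p u_\epsilon \;\ge\; \lambda_\epsilon(u_\epsilon+\epsilon)^{-\delta} \;\ge\; \lambda_\epsilon(R+1)^{-\delta}\quad\text{in }\Omega.
\]
Setting $s:=\lambda_\epsilon^{1/(p-1)}\bigl[\lambda_1\|\phi_1\|_\infty^{p-1}(R+1)^\delta\bigr]^{-1/(p-1)}$, the test function $s\phi_1$ satisfies $-\Delta_p(s\phi_1)=s^{p-1}\lambda_1\phi_1^{p-1}\le \lambda_\epsilon(R+1)^{-\delta}$ in $\Omega$ and $s\phi_1=u_\epsilon=0$ on $\partial\Omega$. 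Weak comparison for $-\Delta_p$ (test with $(s\phi_1-u_\epsilon)^+\in W_0^{1,p}(\Omega)$ and use strict monotonicity of the operator) then gives $s\phi_1\le u_\epsilon$ in $\Omega$, which is the left inequality with $\mathcal{K}_1:=\bigl[\lambda_1\|\phi_1\|_\infty^{p-1}(R+1)^\delta\bigr]^{-1/(p-1)}$. The role of the hypothesis $\|(\lambda_\epsilon,u_\epsilon)\|_\infty>\varrho$ here is merely to keep us away from $(0,0)$, guaranteeing $\lambda_\epsilon>0$ so that the resulting lower bound is strictly positive.

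For the upper bound I would build an explicit supersolution. Fix $r\in(0,R]$, set $\mathcal{K}_2(r,R):=r^{-\delta}+R^q$, and define
\[
\bar u \;:=\; r + \lambda_\epsilon^{1/(p-1)}\mathcal{K}_2(r,R)^{1/(p-1)}\,e_p,
\]
so that, by scaling in (\ref{torsion}), $-\Delta_p\bar u=\lambda_\epsilon\mathcal{K}_2(r,R)$ in $\Omega$. Wherever $u_\epsilon>\bar u$ one has $u_\epsilon>r$, and therefore $(u_\epsilon+\epsilon)^{-\delta}<r^{-\delta}$ and $u_\epsilon^q\le R^q$, so on that set $-\Delta_p u_\epsilon\le \lambda_\epsilon\mathcal{K}_2(r,R)=-\Delta_p\bar u$. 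Since $\bar u\ge r>0=u_\epsilon$ on $\partial\Omega$, the function $(u_\epsilon-\bar u)^+$ lies in $W_0^{1,p}(\Omega)$ and is supported on $\{u_\epsilon>\bar u\}$; testing the difference of the two equations against it and invoking strict monotonicity of $-\Delta_p$ yields $(u_\epsilon-\bar u)^+\equiv 0$, which is the right inequality in (\ref{7}).

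The arguments are essentially textbook; no restriction on the exponents $q,\delta$ is required, because the global $L^\infty$ bound $\|u_\epsilon\|_\infty\le R$ controls the nonlinearity uniformly. The only technical points worth checking carefully are the admissibility of the two truncated test functions in $W_0^{1,p}(\Omega)$ — ensured by $u_\epsilon\in W_0^{1,p}(\Omega)$ (thanks to $\epsilon>0$) together with the strict boundary gap $\bar u\ge r>0$ on $\partial\Omega$ for the upper estimate — and the standard monotonicity inequality for $|\cdot|^{p-2}(\cdot)$ that converts the weak inequality of the test-function integrals into pointwise vanishing of $(u_\epsilon-\bar u)^+$ (respectively $(s\phi_1-u_\epsilon)^+$).
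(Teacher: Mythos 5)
Your proof is correct, and the two halves deserve separate comments. The upper bound is essentially the paper's argument: the authors also take $\bar u = r + \lambda_\epsilon^{1/(p-1)}\mathcal{K}_2^{1/(p-1)}e_p$ with $\mathcal{K}_2(r,R)=\max\{t^{-\delta}+t^q:\ r\le t\le R+1\}$ and compare on the set $\{u_\epsilon>r\}$, which is the same computation as yours on $\{u_\epsilon>\bar u\}$. The lower bound, however, is where you genuinely depart from the paper. The authors first use the upper bound (with $r=\varrho/4$) and the hypothesis $\|(\lambda_\epsilon,u_\epsilon)\|_\infty>\varrho$ to derive a quantitative lower bound $\lambda_\epsilon>C_*(\varrho,R)$, and then run a Picone-inequality argument: exploiting that $\big((t+1)^{-\delta}+t^q\big)/t^{p-1}\to+\infty$ as $t\to 0^+$, they choose $\tilde K>\max\{R,\lambda_1/C_*\}$ and a threshold $a$ so that on the set $[\underline u_\epsilon>u_\epsilon]$ the reaction term dominates $\tilde K u_\epsilon^{p-1}$, forcing that set to be null. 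You instead discard $u_\epsilon^q$, bound the singular term from below by the constant $\lambda_\epsilon(R+1)^{-\delta}$ using only $\|u_\epsilon\|_\infty\le R$ and $\epsilon<1$, and compare directly with $s\phi_1$; this is more elementary, avoids Picone entirely, and in fact never uses the hypothesis $\|(\lambda_\epsilon,u_\epsilon)\|_\infty>\varrho$ (your $\mathcal{K}_1$ depends only on $R$, which trivially satisfies the statement). On that last point your remark that this hypothesis serves ``merely to guarantee $\lambda_\epsilon>0$'' is slightly off --- positivity of $\lambda_\epsilon$ is already assumed; in the paper the hypothesis is what produces the quantitative bound $\lambda_\epsilon>C_*$ feeding their choice of constants, whereas in your argument it is simply not needed. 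Both routes yield valid positive constants of the required form, and nothing later in the paper depends on the particular shape of $\mathcal{K}_1$, so your simplification is harmless.
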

\begin{proof}
	To prove the first inequality in (\ref{7}), we set
	$$\mathcal{K}_2(r, R) = \max\left\{t^{-\delta} + t^q: ~  r \leq t \leq R + 1, x \in \overline{\Omega}\right\}, $$ where $r$ is a fixed  number on  $(0, R ]$, and $\mathcal{O}_r = \{ x \in \Omega : u_\epsilon > r\}$. Then, it follows from the definition of $\mathcal{K}_2$ that
	\begin{eqnarray*}
		-\Delta_p \Big(  r + \lambda_{\epsilon}^{\frac{1}{p-1}}\mathcal{K}_2(r, R)^{\frac{1}{p-1}}e_p \Big) = \lambda_{\epsilon}\mathcal{K}_2(r, R) \geq {\lambda_{\epsilon}} \left(u_\epsilon + \epsilon)^{-\delta} + u_\epsilon^q
		\right) \geq -\Delta_p u_\epsilon~~ \mbox{in} ~ \mathcal{O}_r .
	\end{eqnarray*}
	Since $  r + \lambda_{\epsilon}^{\frac{1}{p-1}}\mathcal{K}_2(r, R)^{\frac{1}{p-1}}e_p - u_{\epsilon} = \lambda_{\epsilon}^{\frac{1}{p-1}}\mathcal{K}_2(r, R)^{\frac{1}{p-1}}e_p \geq 0 $ on $\partial\mathcal{O}_r$, the claim is valid in $\mathcal{O}_r$ by classical comparison principles. In the complementary of $\mathcal{O}_r$, the inequality is obvious. 
	
	To show the first inequality in (\ref{7}), we start by proving that
	$$
	\lambda_{\epsilon} >  C_* := \min\left\{ \frac{1}{\mathcal{K}_2(\varrho/4, R)}\left(\frac{\varrho}{4\|e_p\|_\infty}\right)^{p-1}, \frac{\varrho}{4}\right\} .
	$$
	In fact, otherwise by taking $r = \varrho/4$ in the second inequality in (\ref{31}) we would have $(\lambda_\epsilon,  u_{\epsilon} ) \in B_{3\varrho/ 4}(0,0)\subset \mathbb{R}\times C(\overline{\Omega})$, which contradicts the fact that $\|(\lambda_{\epsilon}, u_\epsilon)\|_\infty >  \varrho . $
	
	Now, let us define $\underline{u}_{\epsilon} = \lambda_{\epsilon}^{\frac{1}{p-1}}\mathcal{K}_1(R, \varrho)\phi_1$, where $\mathcal{K}_1(R, \varrho)$  will be chosen later. It follows from Picone's inequality that
	\begin{eqnarray}\label{8}
	0 & \leq & \displaystyle\int_{\Omega}\left[ |\nabla\underline{u}_{\epsilon}|^{p-2}\nabla\underline{u}_{\epsilon}\nabla \left(\frac{\underline{u}_{\epsilon}^p - {u}_{\epsilon}^p}{\underline{u}_{\epsilon}^{p-1}}\right)^+ -  |\nabla{u}_{\epsilon}|^{p-2}\nabla{u}_{\epsilon}\nabla \left(\frac{\underline{u}_{\epsilon}^p - {u}_{\epsilon}^p}{{u}_{\epsilon}^{p-1}}\right)^+\right]dx \nonumber\\
	& \leq & \lambda_{\epsilon} \displaystyle\int_{\Omega} \left[\frac{\lambda_1\mathcal{K}_1^{p-1}\phi_1^{p-1}}{(\lambda_{\epsilon}^{1/(p-1)}\mathcal{K}_1\phi_1)^{p-1}} - \frac{(u_\epsilon+\epsilon)^{-\delta} + u_\epsilon^q}{u_\epsilon^{p-1}}\right]\left(\underline{u}_{\epsilon}^p - u_{\epsilon}^p\right)^+ dx \nonumber\\
	& = & \lambda_{\epsilon}\displaystyle\int_{\Omega}\left[ \frac{\lambda_1}{\lambda_{\epsilon}} - \frac{(u_\epsilon + \epsilon)^{-\delta} + u_\epsilon^q}{u_{\epsilon}^{p-1}}\right]\left(\underline{u}_{\epsilon}^p - u_{\epsilon
	}^p\right)^+ dx.	\end{eqnarray}

	Since $((t+1)^{-\delta} + t^q)/t^{p-1} \rightarrow +\infty$ as $t \rightarrow 0^+$, for $\tilde{K} > \max\{R, \lambda_1/C_*\}$ given we can find $a > 0$ such that $(t+1)^{-\delta} + t^q \geq \tilde{K}t^{p-1},$ for all  $0 < t < a$. Hence, for  $\mathcal{K}_1(R, \varrho) = {a}/\left({2
		\tilde{K}^{\frac{1}{p-1}}\|\phi_1\|_{\infty}}\right)$ the first inequality in (\ref{7}) holds. 
	Indeed, if  $\vert [\underline{u}_{\epsilon} > u_\epsilon] \vert >0$
	 then
	$${u}_{\epsilon}  \leq \underline{u}_{\epsilon}  \leq \frac{a}{2} ~\mbox{on } [\underline{u}_{\epsilon} > u_\epsilon].$$	
	Therefore, going back to (\ref{8}) and using that ${\lambda_1}/{\lambda_\epsilon} \leq {\lambda_1}/{C_*}$, we get
	\begin{eqnarray*} 0 &\leq & \lambda_{\epsilon}\displaystyle\int_{\Omega}\left[ \frac{\lambda_1}{\lambda_{\epsilon}} - \frac{(u_\epsilon+ \epsilon)^{-\delta} +  u_\epsilon^q}{u_{\epsilon}^{p-1}}\right]\left(\underline{u}_{\epsilon}^p - u_{\epsilon}^p\right)^+ dx \\
		&  \leq & \lambda_{\epsilon}\displaystyle\int_{\Omega}\left[ \frac{\lambda_1}{C_*} - \frac{\tilde{K}u_{\epsilon}^{p-1}}{u_{\epsilon}^{p-1}}\right]\Big(\underline{u}_{\epsilon}^p - u_{\epsilon}^p\Big)^+ dx < 0,
	\end{eqnarray*}
	which is an absurd. Hence, $\lambda_{\epsilon}^{\frac{1}{p-1}}\mathcal{K}_1(R,\varrho) \phi_1 \leq u_{\epsilon}$ in $\Omega$ and the inequality (\ref{7}) is proved.
\end{proof}
\vspace{0.3cm}

\textbf{Proof Theorem \ref{T1}}
 Our proof will be based again on the Lemma \ref{L9}.  Initially, notice that $(0,0) \in \Sigma_\epsilon$, for all $0 < \epsilon < \varepsilon_1$, whence   
	 the pair $(0,0)$ fulfills the first condition of the mentioned lemma. To prove that the second condition in Lemma \ref{L9} is also satisfied, let $B_R(0,0) \subset \mathbb{R}\times C_0(\overline{\Omega})$ be the ball centered at $(0,0)$ with radius $R>0$,   $\{\epsilon_n\}_{n=1}^{\infty} \subset  (0, \varepsilon_1)$ a sequence such that $\epsilon_n \rightarrow 0^+$, and $\{(\lambda_k, u_k)\}_{k=1}^{\infty}$ a sequence in $\left(\displaystyle\bigcup_{n=1}^{+\infty}\Sigma_{\epsilon_n}\right) \cap B_R(0,0)$. We have three cases to consider:
	 \begin{itemize}
	 	\item[a)]  an infinite amount terms of the sequence $\{(\lambda_k, u_k)\}_{k=1}^{\infty}$ belongs to some $\Sigma_{\epsilon_n}$.
	 	\item[b)] $(0,0)$ is a limit point of $\{(\lambda_k, u_k)\}_{k=1}^{\infty}$.
	 	\item[c)] $\{(\lambda_k, u_k)\}_{k=1}^{\infty}$ has terms on infinite amount of $\Sigma_{\epsilon_n}$ and $(0,0)$ is not a limit point of this sequence.
	 \end{itemize} 	 
	 
	 If $a)$ occurs,  by using Arzel�-Ascoli theorem, we get a convergent subsequence in the $\mathbb{R}\times C_0(\overline{\Omega})-$topology. If  condition $b)$ holds, naturally we have a convergent subsequence as well. In the case of $c)$ be true, we can assume without loss of generality that $(\lambda_k, u_k) \in \Sigma_{\epsilon_k}$ and $\varrho \leq |(\lambda_k, u_k)|_\infty \leq R$, for some $\varrho > 0$ and for all $k \in \mathbb{N}$.
	Thus we are able to use Lemma \ref{L1.9} to obtain positive constants $\mathcal{K}_1= \mathcal{K}_1(R,\varrho)$ and $\mathcal{K}_2 = \mathcal{K}_2(r, R)$ such that 
	\begin{equation}\label{31}
	\lambda_{k}^{\frac{1}{p-1}}\mathcal{K}_1(R,\varrho) \phi_1 \leq u_{k} \leq r + \lambda_{k}^{\frac{1}{p-1}}\mathcal{K}_2(r, R)^{\frac{1}{p-1}}e ~~ \mbox{in} ~\Omega,
	\end{equation}
	for each $r \in (0, R]$ fixed.
	 
	 
	 Suppose that $\lambda_k \rightarrow \lambda \geq 0$. If $\lambda = 0$, then by (\ref{31}) we have $(\lambda_k, u_k) \rightarrow (0,0)$ in $\mathbb{R}\times C_0(\overline{\Omega})$, which contradicts the fact that $(0,0)$ is not a limit point of the sequence $\{(\lambda_k,u_k)\}_{k=1}^{\infty}$.  Therefore, $0 <\displaystyle\inf_k \lambda_k \leq  \lambda_k \leq R$ for all $k$ sufficiently large. From this and (\ref{31}), the existence of the subsequence convergent of $\{(\lambda_k,u_k)\}_{k=1}^{\infty}$ is a consequence of Lemma \ref{add}.
	 
	 Therefore, from  Lemma \ref{L9}, Lemma \ref{L11} and Proposition \ref{P2}
	  we obtain that
	 $$\Sigma' := \displaystyle\lim_{n \rightarrow \infty}\sup \Sigma_{\epsilon_n}$$ is unbounded, closed, connected and joins $(0,0)$ to $(0, +\infty)$. Moreover, by Proposition \ref{P2} we also have $\overline{\displaystyle\mbox{Proj}_{\mathbb{R}^+} \Sigma'} := [0, \Lambda^*] \subset [0, \lambda_1(\zeta+1)^{\delta}\zeta^{p-1}]$ and $\Lambda^* \leq \Lambda_{\epsilon}$ (see item$-iv$ in Proposition \ref{P2}).
	 
	 Let us prove that $\Sigma:= \Sigma'\backslash \{(0,0)\}$ has the properties stated in the theorem. It is a direct consequence of the Lemma \ref{add}  and the construction of $\Sigma$ that $\Sigma$ is formed by solutions of $(P)$.
	 
	 Next, let us show that $\Sigma$ contains the branch of minimal solutions of $(P)$. In fact, assume $\lambda_* \in \displaystyle\mbox{Proj}_{\mathbb{R}} \Sigma $, let $(\lambda_*, u_*) \in \mathbb{R}_+ \times C_0(\overline{\Omega})$ be a pair of solution of $(P)$  and consider the iterative process
	\begin{equation}\label{itsing}
	\left\{\begin{array}{l}
	-\Delta_p u_n - \lambda_* u_n^{-\delta} = \lambda_* u_{n-1}^q ~~ ~\mbox{in} ~~\Omega, \\
	u_0 = 0, ~~~~ ~u_n \in W^{1,p}_{\mathrm{loc}}(\Omega) \cap C_0
	(\overline{\Omega})
	\end{array}\right.
	\end{equation} 

It is clear that $u_0 \leq u_1 $ in $\Omega$. By induction, we assume  $u_{n-1} \leq u_n $ in $\Omega$ and let us prove that  
$u_{n} \leq u_{n+1}$ in $\Omega$. Indeed, 
$$ \begin{array}{l}
	-\Delta_p u_n  =  \lambda_* u_n^{-\delta} + \lambda_* u_{n-1}^q \\
	-\Delta_p u_{n+1}  =  \lambda_* u_{n+1}^{-\delta} + \lambda_* u_{n}^q \geq \lambda_* u_{n+1}^{-\delta} + \lambda_* u_{n-1}^q,
\end{array}
$$
that is, $u_n$ is a solution and $u_{n+1}$ is a supersolution of 
$$ -\Delta_p u = \lambda_* u^{-\delta} + \lambda_* u_{n-1}^q ~~\mbox{in} ~\Omega, ~~u|{\partial \Omega} = 0, $$ respectively. So, we can apply the comparison principle of \cite{ZAMP} to conclude that $u_n \leq u_{n+1}$ in $\Omega$, as claimed. 

 Analogously,  we can show that $0 < u_n \leq u_*$ in $\Omega$, for all $n \in \mathbb{N}$. Since $0 < u_1 \leq u_n \leq u_*$ for all $n \geq 1$, we are able to employ the same steps of proof of Lemma \ref{add} to ensure the existence of a solution $\underline{u}_* \in W^{1,p}_{\mathrm{loc}}(\Omega) \cap C_0(\overline{\Omega})$ of $(P)$ such that $u_n \rightarrow \underline{u}_*$ in $W^{1,p}_{\mathrm{loc}}(\Omega)$ and in $C_0(\overline{\Omega})$, up to a subsequence. Furthermore,  the construction of $\underline{u}_*$ assures us that this must be the minimal solution of $(P)$ with $\lambda = \lambda_*$.
	
Finally, we will show that $(\lambda_*, \underline{u}_*) \in \Sigma $. To this end, let us consider $\epsilon_k \searrow 0^+$ as $k \rightarrow +\infty$ and denote by $\underline{u}_{\epsilon_k}$ the minimal solution of $(P_{\epsilon_k})$ with $\lambda = \lambda_*$. Once again by monotonic iteration and the comparison principle in \cite{ZAMP}, we have 
\begin{equation}\label{add2}
\underline{u}_{1} \leq \underline{u}_{\epsilon_k} \leq \underline{u}_*
\end{equation} for all $\epsilon_k \in (0,1]$.  It follows from  Lemma \ref{add} and inequalities (\ref{add2}) that $\underline{u}_{\epsilon_k} \rightarrow \underline{u}_*$ as $k \rightarrow \infty$. Since $(\lambda_*, \underline{u}_{\epsilon_k}) \in \Sigma_{\epsilon_k}$, the construction of $\Sigma$ provides $(\lambda_*, \underline{u}_*)\in \Sigma$.
	 
	 Now let us verify that item$-ii)$ holds. On the contrary, we could find a pair \linebreak 
	$(\lambda_*,u_*) \in \mathbb{R}\times \left(W^{1,p}_{\mathrm{loc}}(\Omega)\cap C_0(\overline{\Omega})\right)$ of solution of the problem $(P)$ with $\lambda_* > \Lambda^*$. 
	 
	 Let $\epsilon_k \searrow 0^+$ as $k \rightarrow +\infty$.  Given $\tau = (\lambda_* - \Lambda^*)/2$, we can apply Lemma \ref{L11} to find some $k_0 \in \mathbb{N}$ such that $\Sigma_{\epsilon_k} \subset V_{\tau}(\Sigma')$, for all $k > k_0$. In particular, $\Lambda_{\epsilon_k} \leq \Lambda^* + \tau < \lambda_*$ for all $k > k_0$, where $\Lambda_{\epsilon_k}$ is the threshold parameter for the existence of solutions of $(P_{\epsilon_k})$. Let us fix $k > k_0$, $\hat{\lambda} \in (0,\Lambda_{\epsilon_k}]$ and consider $\underline{\hat{u}}_{\epsilon_k} \in W_0^{1,p}(\Omega) \cap C_0(\overline{\Omega})$  the minimal solution of $(P_{\epsilon_k})$ with $\lambda = \hat{\lambda}$. Since $u_*$ is a supersolution of the problem $(P_{\epsilon_k})$ with $\lambda = \hat{\lambda}$, once again by monotonic iteration we can conclude that $\underline{\hat{u}}
_{\epsilon_k} \leq u_*$ in $\Omega$. So, about the problem  
\begin{equation}\label{38}\left\{
\begin{array}{l}
-\Delta_pu = \lambda_*((u+\epsilon_k)^{-\delta} + u^q) ~~ \mbox{in} ~~\Omega,\\
u > 0 ~\mbox{in} ~\Omega,~~u = 0 ~\mbox{on} ~\partial \Omega,
\end{array}
\right.
\end{equation}
we  can summarize the following facts:
\begin{itemize}
	\item $\underline{\hat{u}}
	_{\epsilon_k}$ is a subsolution of (\ref{38});
	\item $u_*$ is a supersolution of (\ref{38});
	\item $0 <\underline{\hat{u}}
	_{\epsilon_k}\leq u_*$ in $\Omega$.
\end{itemize}
Hence, we are able to apply Theorem 2.4 of \cite{Loc} to get a $W_0^{1,p}(\Omega)-$solution of (\ref{38}) in $[\underline{\hat{u}}
_{\epsilon_k},u_*]$, which contradicts the fact that (\ref{38}) does not admits any solution since $\lambda_* \notin \mbox{Proj}_{\mathbb{R}} \Sigma_{\epsilon_k}$. This proves item$-ii)$. 

Regarding item$-iii)$, the  multiplicity for $\lambda > 0$  small follows from the facts that $\Sigma$ is connected and $\lambda =0$ is a bifurcation value of $\Sigma$ from the infinity and from the trivial solution. 
 Indeed, let $C_* > 0$ be a positive
 constant such that $(P)$ admits at most a positive solution satisfying $\|u\|_\infty \leq C_*$ (such constant exists by Lemma \ref{L7}). If for some $\check{\lambda}> 0$ small enough  $(P)$ does not admit two distinct solutions in $\Sigma$, then we can define the open set $U = (0,\check{\lambda})\times V$, where $V := \{u \in C(\overline{\Omega}) ~: ~\|u\|_\infty > C_* \}$,
 and conclude that $U^c \cap \Sigma \neq \emptyset$ (since $(0,0) \in \overline{\Sigma}$) and $U \cap \Sigma \neq \emptyset$ (because $\lambda=0$ is a bifurcation value of $\Sigma$ from the infinity). However $\partial U \cap \Sigma = \emptyset$  because 
 $ \{0, \check{\lambda}\}\times \overline{V} = \emptyset $, due to our contradiction assumption,  and  $ [0, \check{\lambda}]\times \partial{V}  = \emptyset$, because $\|\underline{u}_{\lambda}\|_\infty < C_*$ for any $\lambda \leq \check{\lambda}$ since we are supposing $\check{\lambda}$ enough small. This contradiction leads us to conclude that there exists $\Lambda_* > 0$ such that $ (P)$ admits at least two solutions for $\lambda \in (0, \Lambda_*)$.

 In the particular case, when $\delta \in (0,1)$, the proof of the existence of at least two solutions for $\lambda \in (0, \Lambda^*)$ is obtained by  redoing the proof of item$-iii)$ in Proposition \ref{P2} and noting that in this case any continuous solution of $(P)$ belongs to $C_0^1(\overline{\Omega})$ (see Theorem B.1 in \cite{Giaco}).
\fim

\vspace{0.4cm}

Our goal from now on is to establish the proof of Theorem \ref{T2},  which is essentially inspired by \cite{Azizieh}, see also \cite{Chen} and \cite{Fig} . For this, we need to introduce some definitions and preliminary results. 

\begin{lemma}(\cite{Azizieh}, Lemma 4.1)\label{L35}
	 Let $\Omega \subset \mathbb{R}^N$ be a convex and bounded domain with  $C^2$ boundary. Then 
	 $$\{x \in \mathbb{R}^N ~: ~x = y + t\nu(y), ~0 < t < 2\rho\} \subset \Omega,$$
	 for some $\rho > 0$, where $\nu(y)$ denotes the inward unit normal to $\partial \Omega$ at $y$.	 
\end{lemma}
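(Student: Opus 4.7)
The plan is to reduce the lemma to the \emph{uniform interior sphere condition}: there exists $\rho > 0$ such that for every $y \in \partial\Omega$ the open ball $B_\rho(y + \rho\nu(y))$ is contained in $\Omega$. Once this is in hand, the open segment $\{y + t\nu(y) : 0 < t < 2\rho\}$ is exactly the open diameter of that ball through the contact point $y$, so it lies in the open ball and hence in $\Omega$. This gives the conclusion with the same $\rho$, so the substance of the proof is entirely in establishing the uniform interior sphere condition.

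To establish it, I would exploit the $C^2$-regularity and compactness of $\partial\Omega$. Cover $\partial\Omega$ by finitely many open patches on each of which, after a rigid motion placing a chosen base point at the origin with inward normal $e_N$, the boundary is the graph of a $C^2$ function $x_N = \phi(x')$ with $\phi(0)=0$ and $\nabla\phi(0)=0$. By compactness, the Hessians of these graph functions admit a uniform bound $M := \sup \|D^2\phi\|_\infty < \infty$. Convexity of $\Omega$ forces $D^2\phi \geq 0$ (the supporting hyperplane at $y$ is precisely $\{x_N = 0\}$), whence Taylor's formula yields $0 \leq \phi(x') \leq \tfrac{M}{2}|x'|^2$ on a uniform neighborhood of the origin. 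Then, for any $\rho \leq 1/(2M)$, a point $(x', x_N)$ on the lower half of $\partial B_\rho(\rho e_N)$ satisfies $x_N = \rho - \sqrt{\rho^2 - |x'|^2} \geq |x'|^2/(2\rho) \geq M |x'|^2 \geq \phi(x')$, with strict inequality for $x' \neq 0$. Hence $B_\rho(\rho e_N)$ sits strictly in the $\Omega$-side of the graph locally; by convexity (which places $\Omega$ entirely on the inward side of each supporting hyperplane) the inclusion extends globally, so $B_\rho(y + \rho \nu(y)) \subset \Omega$ with the same $\rho$ for every $y \in \partial\Omega$.

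The main obstacle, as is usual for uniform geometric statements, lies in ensuring that $\rho$ does not degenerate as $y$ ranges over $\partial\Omega$. This is precisely what the compactness of $\partial\Omega$ and the finite-subcover argument deliver through the uniform Hessian bound $M$ on the boundary parametrizations; once this uniformity is in place, the remainder is the standard comparison with the osculating tangent sphere and the identification of the claimed segment with its open diameter.
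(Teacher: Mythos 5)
The paper does not prove this lemma at all --- it is quoted verbatim as Lemma 4.1 of Azizieh--Cl\'ement \cite{Azizieh} --- so your argument can only be judged on its own merits. Your overall strategy is the standard one and the first two steps are fine: reducing the statement to a uniform interior sphere condition (the segment $\{y+t\nu(y):0<t<2\rho\}$ is indeed the open diameter of $B_\rho(y+\rho\nu(y))$ through $y$), and the local second-order computation showing that, in graph coordinates at $y$, the lower hemisphere of $B_\rho(\rho e_N)$ lies above the graph of $\phi$ once $\rho\le 1/(2M)$ and $\rho$ does not exceed the uniform patch radius. Both of these are correct.

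The gap is in the sentence ``by convexity \dots the inclusion extends globally.'' Knowing that $\overline{\Omega}$ lies on the inward side of every supporting hyperplane confines $\Omega$, but it does not confine the ball: to conclude $B_\rho(y+\rho\nu(y))\subset\Omega$ you would need the center to lie at distance at least $\rho$ from \emph{every} tangent hyperplane $T_z$, $z\in\partial\Omega$, and your local computation at $y$ says nothing about boundary points $z$ far from $y$ (nor about the ball exiting through the opposite side of a thin domain, where the ``upper'' boundary of the local graph representation sits). This local-to-global uniformity is precisely the nontrivial content of the lemma --- it is Blaschke's rolling theorem in the convex setting, or equivalently the positive reach of a compact $C^2$ hypersurface --- and it needs its own argument. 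One clean way to close it with the tools you already set up: let $c=y+\rho\nu(y)$, let $z$ be a nearest point of $\partial\Omega$ to $c$ and $d=|c-z|\le\rho$; internal tangency of $B_d(c)$ forces $\nu(z)=(c-z)/d$, and since $|y-z|\le\rho+d\le 2\rho$ the point $y$ lies in the graph patch at $z$, so writing $y=(y',\phi_z(y'))$ and matching the horizontal component of $\nu(y)=(c-y)/\rho$ with $-\nabla\phi_z(y')/\sqrt{1+|\nabla\phi_z(y')|^2}$ gives $|y'|\le \rho M|y'|$, whence $y'=0$, $y=z$ and $d=\rho$ whenever $\rho M<1$. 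With that step supplied (which, note, does not actually use convexity), your proof is complete.
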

 Let $\nu$ be a direction in $\mathbb{R}^N$ with $|\nu| = 1$. For $\lambda \in \mathbb{R}$ we set
 \begin{itemize}
 	\item $ T_{\lambda}^{\nu} = \{x \in \mathbb{R}^N ~: ~x\cdot \nu = \lambda\}$
 	\item $a(\nu) = \displaystyle\inf_{x \in \Omega} x\cdot \nu $
 	\item $ \Omega_{\lambda}^{\nu} = \{x \in \mathbb{R}^N ~: ~x\cdot \nu < \lambda\},$  which is nonempty for $\lambda > a(\nu)$
 	\item $x_{\lambda}^{\nu} = R_{\lambda}^{\nu}(x) = x + 2(\lambda - x\cdot \nu)\nu$, the reflection of $x \in \mathbb{R}^N$ 
 	 through the hyperplane $ T_{\lambda}^{\nu}$
 	\item  $(\Omega_{\lambda}^{\nu})' = R_{\lambda}^{\nu}(\Omega_{\lambda}^{\nu})$, the reflection of $\Omega_{\lambda}^{\nu}$ 
 	through $ T_{\lambda}^{\nu}$
 	\item $ \Lambda_1(\nu) = \{ \lambda > a(\nu) ~: ~ \forall \mu \in (a(\nu), \lambda) ~\mbox{ none of conditions (a)
 		and (b)) holds }\}$, where the conditions $(a)$ and $(b)$ are the following:
 	\begin{itemize}
 		\item[a)] $(\Omega_{\lambda}^{\nu})'$ becomes internally tangent to $\partial \Omega$
 		\item[b)] $ T_{\lambda}^{\nu}$ is orthogonal to $\partial \Omega$
 	\end{itemize}
 \item $\lambda_1(\nu) = \sup \Lambda_1(\nu)$
 \end{itemize} 
 \begin{lemma}\label{L36}[\cite{Azizieh}, Lemma 4.2]  Let $\Omega \subset \mathbb{R}^N$ be a convex and bounded domain with  $C^2$ boundary and $\rho$ given in Lemma \ref{L35}. Then 
 	$$ \displaystyle\inf_{x \in \partial \Omega} dist(x, T_{\lambda_1(\nu(x)))}^{\nu(x)}) \geq \rho > 0.$$
 \end{lemma}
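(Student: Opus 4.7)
I would first fix $x\in\partial\Omega$, set $\nu:=\nu(x)$, and reduce the lemma to a single geometric inequality about $\lambda_{1}(\nu)$. Since $\nu$ is the inward unit normal at $x$ and $\Omega$ is convex, the tangent hyperplane to $\partial\Omega$ at $x$ is a support hyperplane, so $\Omega\subset\{y\in\mathbb{R}^{N}:y\cdot\nu\geq x\cdot\nu\}$. In particular $x\cdot\nu=a(\nu)$, and hence $\mbox{dist}(x,T^{\nu}_{\lambda})=\lambda-a(\nu)$ for every $\lambda\geq a(\nu)$. The claim therefore reduces to proving $\lambda_{1}(\nu)\geq a(\nu)+\rho$.

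To establish this I would argue that every $\mu\in(a(\nu),a(\nu)+\rho)$ lies in $\Lambda_{1}(\nu)$, ruling out both critical configurations (a) and (b) by contradiction. Assume (a) holds at some such $\mu$: there is $y\in\partial\Omega\cap\overline{\Omega^{\nu}_{\mu}}$ whose reflection $w:=R^{\nu}_{\mu}(y)\in\partial\Omega$ is a point of internal tangency of $(\Omega^{\nu}_{\mu})'$ with $\partial\Omega$. The reflection formula gives $|w-y|=2(\mu-y\cdot\nu)<2\rho$, and the internal tangency forces the inward normal at $w$ to be the reflection across $T^{\nu}_{\mu}$ of the inward normal at $y$, so $\nu(w)\cdot\nu=-\nu(y)\cdot\nu$. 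Combining the support inequality with Lemma~\ref{L35} applied at $y$ yields $\nu(y)\cdot\nu>0$ on the thin slab $\{z\cdot\nu<\mu\}$ (for $y$ too close to the support hyperplane $T^{\nu}_{a(\nu)}$, any non-positive $\nu$-component of $\nu(y)$ would push the segment $\{y+t\nu(y):0<t<2\rho\}$ out of $\Omega$), hence $\nu(w)\cdot\nu<0$.

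The heart of the argument is then to invoke Lemma~\ref{L35} at $w$: the segment $\{w+t\nu(w):0<t<2\rho\}$ lies in $\Omega$. Because $\nu(w)\cdot\nu<0$ and $w\cdot\nu\leq 2\mu-a(\nu)<a(\nu)+2\rho$, the $\nu$-coordinate along this segment strictly decreases and, within a length less than $2\rho$, drops below $a(\nu)$, contradicting $\Omega\subset\{z\cdot\nu\geq a(\nu)\}$. Case (b), where $\nu(w)\cdot\nu=0$ at some $w\in\partial\Omega\cap T^{\nu}_{\mu}$, is dispatched in the same spirit: the length-$2\rho$ inward normal segment at $w$ lies entirely inside the hyperplane $T^{\nu}_{\mu}$, and a convexity comparison with $x\in T^{\nu}_{a(\nu)}$, whose $\nu$-separation from $T^{\nu}_{\mu}$ is less than $\rho$, forces the convex hull of $x$ with that segment (which lies in $\overline{\Omega}$) to contain a point outside of $\{z\cdot\nu\geq a(\nu)\}$.

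The main obstacle is the quantitative bookkeeping in case (a): deriving the identity $\nu(w)\cdot\nu=-\nu(y)\cdot\nu$ from internal tangency of two $C^{2}$ hypersurfaces and then turning the uniform segment length $2\rho$ granted by Lemma~\ref{L35} into a genuine violation of the support inclusion at $x$. Once these ingredients are in place, the remainder of the proof is the moving-plane apparatus of \cite{Azizieh} adapted to the present setting.
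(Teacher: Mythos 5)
The paper does not actually prove this lemma: it is quoted verbatim from \cite{Azizieh} (Lemma 4.2), so there is no internal proof to compare against, and your attempt must be judged on its own terms. Your reduction is correct and is certainly the right first step: convexity makes the tangent hyperplane at $x$ a supporting hyperplane, so $x\cdot\nu=a(\nu)$, $\mathrm{dist}(x,T^{\nu}_{\lambda})=\lambda-a(\nu)$, and the claim becomes $\lambda_1(\nu)\geq a(\nu)+\rho$, i.e.\ neither (a) nor (b) can occur for $\mu\in(a(\nu),a(\nu)+\rho)$. But both of your exclusion arguments have genuine gaps, and they share a common source: the segment property of Lemma \ref{L35} is too weak by itself, and what is really needed is the interior rolling ball $B(w+\rho\nu(w),\rho)\subset\Omega$ that lies behind it.

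For case (b): if $\nu(w)\cdot\nu=0$, the segment $\{w+t\nu(w):0<t<2\rho\}$ stays at the constant height $w\cdot\nu=\mu>a(\nu)$, and the convex hull of this segment with $x$ consists of points whose $\nu$-heights lie in $[a(\nu),\mu]$; none of them leaves the half-space $\{z\cdot\nu\geq a(\nu)\}$, so no contradiction is produced. The contradiction comes instead from the interior ball at $w$, whose lowest point has height $\mu-\rho<a(\nu)$. For case (a) there are two problems. First, the positivity $\nu(y)\cdot\nu>0$ on the thin cap does not follow from your segment argument: if $\nu(y)\cdot\nu\leq 0$ the height along the segment is nonincreasing from $y\cdot\nu>a(\nu)$ and only falls below $a(\nu)$ within length $2\rho$ when $|\nu(y)\cdot\nu|>(y\cdot\nu-a(\nu))/(2\rho)$, so you obtain only a negative lower bound on $\nu(y)\cdot\nu$; moreover, excluding $\nu(y)\cdot\nu=0$ here is essentially case (b) again, whose proof is the broken one. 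Second, even granting $\nu(w)\cdot\nu=-s<0$, the segment at $w$ descends by at most $2\rho s$ over its whole length, whereas it must descend by $w\cdot\nu-a(\nu)$, which can be as large as (nearly) $2\rho$; for small $s$ the descent never reaches $a(\nu)$ and no contradiction materializes. So the ``quantitative bookkeeping'' you flag as the main obstacle is precisely where the argument as written fails; it can be repaired by replacing the segments with the uniform interior ball of radius $\rho$ (or the $C^2$ curvature bound that produces it), which is how the original proof in \cite{Azizieh} proceeds.
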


\begin{lemma}\label{L37}[\cite{Dam}, Theorem 1.5] Let $\Omega$ be a bounded smooth domain in $\mathbb{R}^N$, $N \geq 2$, $1 < p < \infty$, $f:[0, \infty) \rightarrow \mathbb{R}$ a continuous function which is strictly positive and locally Lipschitz
	continuous in $(0, \infty)$ and $u \in C^1(\overline{\Omega})$ a weak solution of
	$$\left\{\begin{array}{ll} 
	-\Delta_p u = f(u) & \mbox{in} ~\Omega \\
	u > 0 & \mbox{in} ~\Omega \\
	u = 0 & \mbox{on} ~\partial \Omega. \end{array} \right.$$
For any direction $\nu $ and for $\lambda$ in the interval $(a(\nu), \lambda_1(\nu)]$,  we have 
$$u(x) \leq u(x_{\lambda}^{\nu}), ~~~\forall  x \in \Omega_{\lambda}^{\nu}. $$
\end{lemma}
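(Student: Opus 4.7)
The strategy is the classical moving planes method of Alexandrov--Serrin, adapted to the degenerate/singular quasilinear setting. Fix a direction $\nu$ with $|\nu|=1$, and for each $\lambda > a(\nu)$ set $u_\lambda(x) := u(x_\lambda^\nu)$ on $\Omega_\lambda^\nu$. Both $u$ and $u_\lambda$ solve $-\Delta_p w = f(w)$ in $\Omega_\lambda^\nu$ (using the invariance of $-\Delta_p$ under reflections), with $u_\lambda = u$ on $T_\lambda^\nu \cap \Omega$ and $u_\lambda > 0 = u$ on $\partial\Omega \cap \overline{\Omega_\lambda^\nu}$. Define
\[
\Lambda(\nu) := \sup\bigl\{\lambda \in (a(\nu), \lambda_1(\nu)] : u \le u_\mu \text{ in } \Omega_\mu^\nu \text{ for every } \mu \in (a(\nu), \lambda]\bigr\}.
\]
The goal is to show $\Lambda(\nu) = \lambda_1(\nu)$, which delivers exactly the stated monotonicity.

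\noindent\textbf{Starting step.} For $\lambda$ just above $a(\nu)$, the cap $\Omega_\lambda^\nu$ is thin in the $\nu$-direction, and both $u$ and $u_\lambda$ are small there (by uniform continuity of $u$ and the boundary condition). I would then invoke a \emph{weak comparison principle for $-\Delta_p$ on domains of small measure} applied to $(u-u_\lambda)^+$, whose support lies in $\Omega_\lambda^\nu$ and vanishes on its boundary. Using that $f$ is locally Lipschitz on $(0,\infty)$ and strictly positive, together with a Poincar\'e-type inequality in the thin cap, gives $(u-u_\lambda)^+ \equiv 0$, i.e.\ $u \le u_\lambda$. Hence the set in the definition of $\Lambda(\nu)$ is nonempty, and by continuity $u \le u_{\Lambda(\nu)}$ on $\Omega_{\Lambda(\nu)}^\nu$.

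\noindent\textbf{Continuation step.} Assume for contradiction that $\Lambda := \Lambda(\nu) < \lambda_1(\nu)$. First I would upgrade the weak inequality to a \emph{strict} one: on each connected component of $\Omega_\Lambda^\nu$ the strong maximum/comparison principle for $-\Delta_p$ (Vazquez, plus the local version of Damascelli--Sciunzi away from the critical set $Z_u = \{\nabla u = 0\}$) combined with positivity of $f$ forces either $u \equiv u_\Lambda$ or $u < u_\Lambda$; the identity is ruled out by the boundary data on $\partial\Omega\cap\overline{\Omega_\Lambda^\nu}$, together with the definition of $\lambda_1(\nu)$ (which prevents tangential contact with $\partial\Omega$ or orthogonality of $T_\Lambda^\nu$ to $\partial\Omega$). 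Next, given $\eta > 0$, choose a compact set $K \subset \Omega_\Lambda^\nu$ with $|\Omega_\Lambda^\nu \setminus K| < \eta$ and on which $\min_K(u_\Lambda - u) > 0$. By continuity of $u$ and of the reflection in $\lambda$, for $\lambda$ slightly above $\Lambda$ we still have $u < u_\lambda$ on $K$. On the remainder $\Omega_\lambda^\nu \setminus K$, whose measure is controlled by $\eta$ plus $O(\lambda - \Lambda)$, the small-measure weak comparison principle again yields $u \le u_\lambda$. This contradicts the maximality of $\Lambda$, proving $\Lambda(\nu) = \lambda_1(\nu)$.

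\noindent\textbf{Main obstacle.} The genuine difficulty is the \emph{weak comparison principle in small-measure sets} for $-\Delta_p$ when $p \ne 2$: the standard linearization used in the semilinear case degenerates on $Z_u$ (when $p>2$) or becomes singular there (when $1<p<2$). Overcoming this is the technical heart of Damascelli--Sciunzi: one must establish a weighted Sobolev/Hardy inequality where the weight is built from $|\nabla u|^{p-2}+|\nabla u_\lambda|^{p-2}$, combined with fine estimates on $|Z_u|$ (obtained from the summability of $|\nabla u|^{-1}$ on suitable level sets of $u$). Granting these weighted inequalities, testing the difference of the equations for $u$ and $u_\lambda$ against $(u-u_\lambda)^+$ and absorbing the Lipschitz term $|f(u)-f(u_\lambda)|\le L(u-u_\lambda)^+$ via Poincar\'e on a small-measure domain closes the argument; every other ingredient above is then routine.
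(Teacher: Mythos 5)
The paper does not prove this lemma at all: it is imported verbatim as Theorem 1.5 of Damascelli--Sciunzi \cite{Dam}, so there is no ``paper's own proof'' to compare against. Your outline does correctly reproduce the strategy of that reference -- moving planes in the direction $\nu$, a starting step in a thin cap, a continuation argument up to $\lambda_1(\nu)$, and you correctly locate the technical heart in the weak comparison principle on small-measure domains for $-\Delta_p$. As a description of how the result is actually proved, this is accurate.

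As a \emph{proof}, however, it has a genuine gap exactly where you flag it, and flagging it does not fill it. Both the starting step and the continuation step are carried entirely by the ``weak comparison principle in small-measure sets,'' which for $p \neq 2$ is not a routine Poincar\'e argument: one must test with $(u-u_\lambda)^+$, control the difference of the two degenerate vector fields via the pointwise inequalities for $|\xi|^{p-2}\xi - |\eta|^{p-2}\eta$, and then absorb the resulting term using a weighted Poincar\'e/Sobolev inequality with weight $\rho = (|\nabla u| + |\nabla u_\lambda|)^{p-2}$, which in turn requires the summability of $|\nabla u|^{-(p-2)-t}$ and the measure estimates on the critical set $Z_u$ that occupy most of \cite{Dam}. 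Writing ``granting these weighted inequalities \dots every other ingredient is then routine'' concedes that the proposal proves nothing beyond the semilinear case. A second, unaddressed point: you invoke $|f(u)-f(u_\lambda)| \le L(u-u_\lambda)^+$ in the caps near $\partial\Omega$, where $u$ takes values arbitrarily close to $0$, but $f$ is only assumed locally Lipschitz on $(0,\infty)$; one needs the strict positivity of $f$ (hence, via Hopf's lemma, the nonvanishing of $\nabla u$ near $\partial\Omega$ and a quantitative lower bound on $u_\lambda$ there) to make that step legitimate. If you intend to cite the result, cite it as the paper does; if you intend to prove it, the weighted comparison principle and the treatment of $f$ near zero are the proof, not addenda to it.
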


Now we are able to proof Theorem \ref{T2}, which follows similar strategy considered in \cite{Azizieh}, with minor changes. However, for the reader convenience, we include the details here.

\vspace{0.3cm}

\textbf{Proof of Theorem \ref{T2}:}  We argue by contradiction, that is, let us assume that there exists $\check{\lambda} \in (0, \Lambda^*]$ being a bifurcation parameter of $\Sigma$ from infinity. Then, by the construction of $\Sigma$, there would exist a subsequence of index $\mathbb{N}^\prime \subset \mathbb{N}$, a numerical sequence $\{\epsilon_n\}_{n \in \mathbb{N}^\prime}$ such that $\epsilon_n \searrow 0 $, and pairs $(\lambda_n, u_n) \in \Sigma_{\epsilon_n}$ satisfying 
$$\left\{\begin{array}{l}
\lambda_n \rightarrow \check{\lambda} ,\\
\|u_n\|_\infty \rightarrow \infty.
\end{array}\right.$$

\noindent\textbf{Claim 1:} For each $n \in  \mathbb{N}^\prime$, there exists a global maximum point $\tau_n \in \Omega$ of $u_n$ (that is, $u_n(\tau_n) = \|u_n\|_\infty)$ such that 
$\mbox{dist}(\tau_n, \partial \Omega) \geq \rho$. 
\vspace{0.2cm}

\noindent\textbf{Proof of claim 1:} Assume by contradiction that every global maximum point $\tau$ of $u_n$  satisfies $\mbox{dist}(\tau, \partial \Omega) < \rho- \epsilon$, for some $\epsilon \in (0, \rho)$. By fixing $\check{\tau}$ a such maximum and considering $\check{x} \in \partial \Omega$ the nearest point of $\partial \Omega$ from $\check{\tau}$, we have that $\mbox{dist}(\check{\tau}, \check{x}) = \mbox{dist}(\check{\tau}, \partial \Omega) < \rho -\epsilon$. Moreover, $\check{\tau}$ belongs to the normal line  to $\partial \Omega$ at $\check{x}$, which will be denoted by $L$. From Lemma \ref{L36}, we are able to find $y \in L \cap \Omega_{\lambda_1(\check{x})}^{\nu(\check{x})} $ with $\mbox{dist}(y, \check{x}) = \mbox{dist}(y, \partial \Omega) = \rho - \epsilon$. Since we are supposing that there are no global maximum points of $u_n$ at a distance of $\partial \Omega$ greater  than or equal to $\rho - \epsilon$, we conclude that $u(y) < u(\check{\tau})$, but this fact contradicts the monotonicity established in Lemma \ref{L37}. So the claim is proved. 
\vspace{0.2cm}

In what follows, we employ a blow-up method to derive a contradiction with the existence of the positive bifurcation parameter $\check{\lambda}\in (0, \Lambda^*]$. For this proposal, denote by 
$$M_n = \|u_n\|_\infty = u_n(\tau_n),$$ where $\tau_n$ is a maximum point of $u_n$ given by Claim 1, and define 
$$w_n(y) = \frac{u_n(M_n^{-k}y + \tau_n)}{M_n}, ~~~y \in ~\Omega_n := M_n^k\left(\Omega - \tau_n\right), $$ where $k= (q-p+1)/p > 0$. Then, from the fact that $(\lambda_n, u_n) \in \Sigma_{\epsilon_n}$ and using change of variable in the integral one obtains
$$ \displaystyle\int_{\Omega_n} |\nabla w_n|^{p-2} \nabla w_n \nabla \varphi dy  = \lambda_n \displaystyle\int_{\Omega_n} \left[w_n^q + M_n^{-kp - p+1}(M_nw_n + \epsilon_n)^{-\delta}\right]\varphi dy, ~~~\forall ~\varphi \in C^{\infty}_c(\Omega_n). $$
Given any $R>0$, we obtain from $M_n \rightarrow \infty$ and $k > 0$ that  $\overline{B_R(0,0)} \subset \Omega_n$ for $n$ large enough, where $B_R(0,0)$ is the ball in $\mathbb{R}^n$  centered at the origin  with radius $R$. Fixing a such ball, notice that 
\begin{equation}\label{eq132} \left(w_n(y) M_n + \epsilon_n\right)^{-\delta }
 \leq [u_n(M_n^{-k}y + x_n)]^{-\delta} \leq [\omega_{\lambda_n,1}(M_n^{-k}y + x_n)]^{-\delta}, ~ ~y \in \overline{B_R(0,0) },\end{equation} where $\omega_{\lambda_n,1}$ is the solution of (\ref{sp}) with $\lambda = \lambda_n$ and $\epsilon = 1$. Noting that $\lambda_n \rightarrow \check{\lambda} > 0$, we get from (\ref{eq132}) that $ \left(w_n(y) M_n  + \epsilon_n\right)^{-\delta } \leq C_R$ in $\overline{B_R(0,0)}$, for some $C_R$  depending on $R$ but not of $n$.  In this way, we can apply once again the regularity results of Lieberman \cite{Lieberman}
 to conclude that $w_n$ is $C^{1,\alpha}(\overline{B_R(0,0)})$ uniformly bounded. Hence, using Arzel�-Ascoli theorem and a diagonalization argument one obtains a subsequence which converges locally uniformly in $C^{1,\beta}(\mathbb{R}^n)$ to a $w \in C^1(\mathbb{R}^n)$ satisfying 
$$\left\{\begin{array}{ll} 
\displaystyle\int_{\mathbb{R}^n} |\nabla w|^{p-2}\nabla w \nabla \varphi dx = \check{\lambda}\displaystyle\int_{\mathbb{R}^n} w^q \varphi dx, ~~~\forall \varphi \in C_c^{\infty}(\mathbb{R}^n) \\
\|w\|_\infty =1, ~~~ w > 0 ~\mbox{in} ~\mathbb{R}^n,
\end{array}\right.$$ that contradicts the result of Serrin and Zou (see Theorem II in \cite{Serrin}).
Therefore, $\lambda = 0$ is the only bifurcation value of $\Sigma$ from infinity. As a consequence, if $\lambda_n \nearrow  \Lambda^*$ and $\underline{u}_{\lambda_n}$ is the minimal solution of $(P)$ with $\lambda = \lambda_n$, then $\|\underline{u}_{\lambda_n}\|_\infty$ is uniformly bounded. So, from Lemma \ref{add} we obtain the existence of $u \in W^{1,p}_{\mathrm{loc}}(\Omega) \cap C_0(\overline{\Omega})$ solution of $(P)$ with $\lambda = \Lambda_*$ such that $u_n \rightarrow u$ in $W^{1,p}_{\mathrm{loc}}(\Omega)$.
\fim

\section{Conclusion}

In this paper, we present a new approach to deal with elliptic quasilinear problems perturbed by strongly-singular terms combined with $(p-1)$-superlinear nonlinearities on smooth bounded domains. With this approach, we were able to establish not only 
 $\lambda$-ranging for existence and multiplicity but also qualitative information of the solutions depending on the parameter $\lambda>0$.  However, mainly due to the lack of a priori estimates and strong comparison principle for strongly-singular problems (that in general requires $C^1(\overline{\Omega})$-regularity of the solutions), we only provided a local multiplicity in the strong singular case ($\delta \geq 1$).  An important challenge in this class of problems is to establish conditions to obtain global multiplicity.

\end{document}